\newtheorem{proposition}{Proposition}[section]
\newtheorem{lemma}[proposition]{Lemma}
\newtheorem{theorem}[proposition]{Theorem}
\newtheorem{corollary}[proposition]{Corollary}
\newcounter{theor}
\newtheorem{teor}[theor]{Theorem}
\theoremstyle{definition}
\newtheorem{definition}[proposition]{Definition}
\newtheorem{remark}[proposition]{Remark}
\newtheorem{example}[proposition]{Example}
\DeclareMathOperator{\Val}{{\bf Val}}
\DeclareMathOperator{\spa}{span}
\DeclareMathOperator{\SL}{SL}
\DeclareMathOperator{\GL}{GL}
\DeclareMathOperator{\SO}{SO}
\renewcommand{\O}{\mathrm{O}}
\newcommand{\R}{\mathbb{R}}
\newcommand{\N}{\mathbb{N}}
\newcommand{\sign}{\mathrm{sign}}
\newcommand{\K}{\mathcal{K}}
\newcommand{\sfe}{\mathbb{S}^{n-1}}
\newcommand{\vol}{V_n}
\DeclareMathOperator{\MVal}{\mathrm{{\bf MVal}}}
\newcommand{\st}{s}
\newcommand{\A}{\mathcal{A}}
\newcommand{\di}{\Phi} 
\newcommand{\dib}{\Psi} 
\newcommand{\rv}{\mu} 
\newcommand{\vv}{\phi} 
\newcommand{\gv}{\varphi} 
\title{Minkowski valuations under volume constraints}
\author{Judit Abardia-Ev\'equoz} 
\address{Institut f\"ur Mathematik, Goethe-Universit\"at Frankfurt, 
Robert-Mayer-Str.~10, 60629 Frankfurt am Main, Germany}
\email{abardia@math.uni-frankfurt.de}
\author{Andrea Colesanti} 
\address{Dipartimento di Matematica ``U. Dini'', Viale Morgagni 67/1, 50134 Firenze, Italy}
\email{colesant@math.unifi.it}
\author{Eugenia Saor\'in G\'omez} 
\address{Fakult\"at f\"ur Mathematik, Otto-von-Guericke Universit\"at Magdeburg, 
Universit\"atsplatz~2, 39106 Magdeburg, Germany}
\email{eugenia.saorin@ovgu.de}
\begin{document}

\thanks{The first author is supported by the DFG grants AB 584/1-1 and AB 584/1-2. 
The second author is supported by the FIR project 2013 ``Geometrical and Qualitative aspects of PDEs'', and by the GNAMPA.
The third author is supported by 19901/GERM/15, Fundaci\'on S\'eneca, CARM, Programa de Ayudas a Grupos de Excelencia de la Regi\'on de Murcia.}

\date{\today}

\subjclass[2010]{Primary 52B45, 
52A40. 
Secondary %
52A20, 
52A39. 
}

\keywords{Minkowski valuation, Rogers-Shephard inequality, affine isoperimetric inequality, difference body}

\begin{abstract}
We provide a description of the space of continuous and translation invariant Minkowski valuations $\di:\K^n\to\K^n$ for which there is an upper and a lower bound for the volume of $\di(K)$ in terms of the volume of the convex body $K$ itself. Although no invariance with respect to a group acting on the space of convex bodies is imposed, we prove that only two types of operators appear: a family of operators having only cylinders over $(n-1)$-dimensional convex bodies as images, and a second family consisting essentially of 1-homogeneous operators. Using this description, we give improvements of some known characterization results for the difference body.
\end{abstract}

\maketitle

\section{Introduction}
An inequality between two geometric quantities associated to a convex body is called \emph{affine isoperimetric inequality} if the ratio of these two quantities is invariant under the action of all affine transformations of the convex body.
Affine isoperimetric inequalities have always constituted an important part of convex geometry and have found numerous applications to different areas, such as functional analysis, partial differential equations, or geometry of numbers (see \cite{lutwak.handbook}). Moreover, affine isoperimetric inequalities are usually stronger than their Euclidean counterparts.

Three of the best known affine isoperimetric inequalities associated to operators between convex bodies are: the Rogers-Shephard inequality, associated to the difference body; the Busemann-Petty centroid inequality, associated to the centroid body; and the Petty projection and Zhang inequalities, associated to the projection body. 
One of the first and most relevant applications of these inequalities was given by Zhang \cite{zhang}, who obtained an affine version of the Sobolev inequality from (an extension of) the Petty projection inequality. Ten years later, Haberl and Schuster \cite{haberl.schuster1,haberl.schuster2} generalized it to an asymmetric affine $L_p$-Sobolev inequality by using the characterization of the $L_p$-projection bodies previously obtained by Ludwig \cite{ludwig05} in the context of the so-called $L_p$-Minkowski valuations. For further results in this direction we refer to \cite[Section 10.15]{schneider.book14}, \cite{haberl.schuster.xiao,ludwig.xiao.zhang,lutwak86,cianchi_lyz_2009,lyz_2010,lyz_2000,lyz_2002,haddad.jimenez.montenegro,wang}, and references therein.

In the present paper, we initiate a study aiming at a deeper understanding of the relationship between affine isoperimetric inequalities and characterization results for Minkowski valuations, by taking the converse direction of Haberl and Schuster~\cite{haberl.schuster1} and classifying, given an affine isoperimetric inequality, all continuous (and translation invariant) Minkowski valuations by which it is satisfied. In this paper, we focus on the affine isoperimetric inequality associated to the difference body operator. 

We denote by $\K^n$ the space of convex and compact sets (convex bodies) in $\R^n$.
The \emph{difference body operator} $D:\K^n\longrightarrow\K^n$ is defined by 
\begin{equation}\label{diff_body}
DK := K + (-K),
\end{equation}
where $-K:=\{x\in\R^n\,:\,-x\in K\}$ and $+$ denotes the Minkowski or vectorial sum.  Notice that the ratio
$$
\frac{V_n(DK)}{V_n(K)}
$$
is invariant under affine transformations of $\R^n$ (here $V_n$ denotes the $n$-dimensional volume).  
The affine isoperimetric inequalities associated to the difference body read as follows
\begin{equation}\tag{RS}
2^n\vol(K)\leq\vol(DK)\leq\binom{2n}{n}\vol(K),\quad\forall K\in\K^n. 
\end{equation}
For convex bodies with non-empty interior, equality holds in the upper inequality exactly if $K$ is a simplex, and convex bodies symmetric with respect to the origin (i.e., $K=-K$) are the only optimizers of the lower inequality. 
The lower bound follows from a direct application of the Brunn-Minkowski inequality (see \cite{schneider.book14}) and the upper bound was proved by Rogers and Shephard in \cite{rogers.shephard} (see also \cite{chakerian,rogers.shephard58} for other proofs and related inequalities). 

We study in this paper the operators $\Phi:\K^n\longrightarrow\K^n$ satisfying an (RS) type inequality, that is, operators such that the volume of the image of a convex body $K$ is bounded uniformly, from above and from below, by a multiple of the volume of $K$ (see Definition~\ref{def_VC_intro}). 
We will always assume these operators to be continuous and Minkowski valuations. 

\smallskip
In the framework of convex geometry, an operator $Z:\K^n\longrightarrow(\A,+)$ is a \emph{valuation} if 
\begin{equation}\label{def_val}
Z(K)+Z(L)=Z(K\cup L)+Z(K\cap L)
\end{equation}
for every $K,L\in\K^n$ such that $K\cup L\in\K^n$. Here $(\A,+)$ denotes an Abelian semigroup.

Valuations have developed as particularly important objects in convex geometry since Dehn's solution to the Third Hilbert problem. Probably the best known result in the theory of valuations is the characterization by Hadwiger~\cite{hadwiger} of the intrinsic volumes as a basis of the space of continuous and motion invariant valuations taking values in $\R$. 
The interested reader is referred to \cite{alesker.survey,bernig.survey,fu.survey,mcmullen93,mcmullen.schneider} 
and \cite[Chapter 6]{schneider.book14} for valuable and detailed surveys about the state of the art of the theory of real-valued valuations. We refer also to \cite{alesker.faifman,bernig.fu.solanes,bernig.fu,ludwig.reitzner} for further recent results in this area.
Apart from the real-valued case, there has been an increasing interest in valuations having other spaces as codomain. Examples of these are, among others, the space of matrices, tensors, area and curvature measures, and various function spaces (see e.g.\! \cite{bernig.hug,haberl.parapatits,ludwig_matrix,ludwig_sob,wannerer1,wannerer2}). 

\emph{Minkowski valuations} are those taking values in $\K^n$ endowed with the Minkowski addition. In other words, $\di:\K^n\longrightarrow\K^n$ is a Minkowski valuation if 
\eqref{def_val} holds for every pair of convex bodies $K$ and $L$ such that $K\cup L\in\K^n$, and  the Minkowski addition is taken on both sides of the equality. As stated before, they will be the main object of study of this work.  

One of the most pursued scopes in the theory of valuations amounts to characterize classical and new objects from the realm of convex geometry, as valuations with specific additional properties. These additional properties are usually of two types:
\begin{itemize}
\item[(i)] topological: continuity or semi-continuity with respect to the standard topology on $\K^n$;
\item[(ii)] algebraic-geometrical: covariance or contravariance with respect to the action of some group of transformations of $\K^n$, such as the group of 
translations, $\GL(n)$ or $\SO(n)$.
\end{itemize}

In this paper we aim to use a property of a different nature: a {\it metric-geometrical} property, namely the fulfillment of the following volume constraint, recently introduced in \cite{abardia.colesanti.saorin1} (see also \cite{abardia.saorin2,colesanti.hug.saorin}).

\begin{definition}\label{def_VC_intro}
Let $\di:\K^n\longrightarrow\K^n$ be an operator. We say that $\di$ \emph{satisfies a  volume constraint (VC)} if there are constants $c_\di,C_\di>0$ such that
\begin{equation}\tag{VC}
c_\di\vol(K)\leq \vol(\di(K))\leq C_\di\vol(K),\quad\forall K\in\K^n.
\end{equation}
\end{definition}

In \cite{abardia.saorin2}, the following characterization result for the difference body operator was obtained, based on (VC).

\begin{teor}[\cite{abardia.saorin2}]\label{thm_as} Let $n\geq 2$. An operator $\di: \K^n\longrightarrow\K^n$ is continuous, $\GL(n)$-covariant, and satisfies the upper bound in the (VC) condition if and only if there are $a,b\geq 0$ such that $\di(K)=aK+b(-K)$ for every $K\in\K^n$.
\end{teor}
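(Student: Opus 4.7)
For the ``if'' direction, the operator $K\mapsto aK+b(-K)$ is continuous in the Hausdorff metric (Minkowski addition and $K\mapsto -K$ are both continuous), $\GL(n)$-covariant because every $g\in\GL(n)$ commutes with Minkowski sum and satisfies $g(-K)=-gK$, and satisfies the upper (VC) bound with constant $(a+b)^n$ via the Brunn--Minkowski inequality. For the converse, three immediate consequences of $\GL(n)$-covariance are used throughout: since $\Phi(\{0\})=g\Phi(\{0\})$ for every $g\in\GL(n)$ and $\{0\}$ is the unique nonempty compact convex $\GL(n)$-invariant subset of $\R^n$, one has $\Phi(\{0\})=\{0\}$; taking $g=tI$ with $t>0$ yields $1$-homogeneity $\Phi(tK)=t\Phi(K)$; and $g=-I$ yields $\Phi(-K)=-\Phi(K)$.

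The second step classifies $\Phi$ on segments through the origin. The $\GL(n)$-stabilizer of $[0,e_1]$ is $H:=\{g\in\GL(n): ge_1=e_1\}$, and its orbit of any vector $\lambda e_1+w$ with $w\in e_1^\perp\setminus\{0\}$ is unbounded (the group $H$ acts on $e_1^\perp$ as $\GL(n-1)$ and shears the $e_1$-component by any linear functional of $w$). Since $\Phi([0,e_1])$ is compact and $H$-invariant, it must lie in $\R e_1$, so $\Phi([0,e_1])=[\alpha e_1,\beta e_1]$ for some $\alpha\le\beta$. Applying $\GL(n)$-covariance with $g$ sending $e_1$ to $v$ propagates this to $\Phi([0,v])=[\alpha v,\beta v]$ for every $v\neq 0$, with the same two constants. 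A continuity argument applied to the sausage $[0,e_1]+\varepsilon B^n$ of volume $\Theta(\varepsilon^{n-1})$, combined with the upper (VC) bound, forces $\Phi([0,e_1])$ to contain the origin, so $\alpha\le 0\le\beta$; setting $a:=\beta$ and $b:=-\alpha\ge 0$ one obtains $\Phi([0,v])=a[0,v]+b[-v,0]$.

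The third and central step extends the formula from these segments to arbitrary $K\in\K^n$. My plan is to treat polytopes first: for a polytope $P$ with a vertex at the origin, use $\GL(n)$-covariance under axis-collapsing degenerations $\operatorname{diag}(1,\varepsilon,1,\ldots,1)$ and coordinate permutations to successively reduce $P$ to faces whose $\Phi$-image is already known; the upper (VC) bound then eliminates any component of $\Phi(P)$ transversal to the affine hull of $aP+b(-P)$, as such a component would produce volume scaling incompatible with $V_n(\Phi(P))\le C_\Phi V_n(P)$ under these squeezes. Continuity of $\Phi$ and density of polytopes with a vertex at the origin in each $\GL(n)$-orbit then extend the identity to all of $\K^n$. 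The main obstacle is precisely this extension: without a Minkowski-valuation hypothesis there is no cell-decomposition or inclusion--exclusion available, and without translation invariance one cannot recentre $P$; identifying a family of degenerations along which the (VC) inequality is tight enough to force the two-parameter structure $\{aK+b(-K):a,b\ge 0\}$, rather than permit extra additive components, is the technical heart of the proof.
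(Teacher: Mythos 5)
This paper does not prove Theorem~\ref{thm_as}; it quotes it from \cite{abardia.saorin2}, so there is no in-paper proof to compare against and I can only assess your argument on its own terms. Your opening moves are sound: $\di(\{0\})=\{0\}$, positive $1$-homogeneity, oddness via $g=-I$, and the stabilizer/orbit argument forcing $\di([0,e_1])\subseteq\R e_1$ and hence $\di([0,v])=[\alpha v,\beta v]$ with constants independent of $v$ are all correct. One slip in the easy direction: Brunn--Minkowski gives the \emph{lower} bound $\vol(aK+b(-K))\ge (a+b)^n\vol(K)$; the upper bound in (VC) for $aK+b(-K)$ requires the Rogers--Shephard inequality, e.g.\ via $aK+b(-K)\subseteq\max(a,b)\,DK$ after translating $K$ to contain the origin.

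The genuine gaps are in the two steps you lean on most. First, the claim that the sausage $[0,e_1]+\varepsilon B^n$ together with (UVC) forces $0\in\di([0,e_1])$ is not an argument: all that (UVC) yields there is $\vol(\di([0,e_1]+\varepsilon B^n))\to 0$, i.e.\ that the limit is degenerate, which you already know from the stabilizer argument. That $0\in\di([0,e_1])$ must come from a substantive interaction between continuity and the values of $\di$ on full-dimensional bodies: the map sending $K$ to its (relative) centroid is $\GL(n)$-covariant and trivially satisfies (UVC), sends $[0,e_1]$ to $\{e_1/2\}$, and is ruled out only by its discontinuity at degenerate bodies, so no soft argument of the kind you sketch can work. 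Second, and more seriously, your classification on segments is incomplete before the extension problem even begins: the segments $[se_1,te_1]$ with $s<0<t$ form a one-parameter family of pairwise distinct $\GL(n)$-orbits (the unordered ratio $\{s/t,t/s\}$ is an invariant), so knowing $\di([0,v])$ for all $v$ says nothing about them. Yet these are exactly the one-dimensional shadows needed by the induction hidden in your ``axis-collapsing'' idea, whose correct kernel is the commutation $\pi(\di(K))=\di(\pi(K))$ for linear projections $\pi$ onto hyperplanes through the origin (obtained from $\mathrm{diag}(1,\dots,1,\varepsilon)$, continuity, and conjugation). Pinning down $\di$ on that one-parameter family --- equivalently, controlling $\di$ on bodies positioned arbitrarily relative to the origin, where (UVC) must be used quantitatively (note that $K\mapsto\mathrm{conv}(K\cup(-K))$ is continuous and covariant and fails (UVC) precisely on far-translated bodies) --- is the actual content of the theorem, and you explicitly leave it open. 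As it stands, the proposal is a correct reduction plus an honest statement of the remaining problem, not a proof.
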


This theorem belongs to a very recent and rapidly developing theory of classification results in convex geometry, without the notion of Minkowski valuation but under other natural, and very general, properties such as symmetrization. Some of these general results yield as a corollary a characterization of the difference body operator. We highlight in the following the first of them and refer the reader to \cite{bianchi.gardner.gronchi,gardner.hug.weil1,gardner.hug.weil2,milman.rotem} for more details. 
\begin{teor}[\cite{gardner.hug.weil1}]\label{thm_ghw} Let $n\geq 2$. An operator $\di: \K^n\longrightarrow\K^n$ is continuous, translation invariant, and $\GL(n)$-covariant if and only if there is a $\lambda\geq 0$ such that $\di(K)=\lambda DK$ for every $K\in\K^n$.
\end{teor}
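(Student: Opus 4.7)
For the ``if'' direction, $D$ is continuous because Minkowski addition and reflection $K\mapsto -K$ are continuous; translation invariant since $D(K+v)=(K+v)+(-(K+v))=K+(-K)=DK$; and $\GL(n)$-covariant since $D(gK)=gK+g(-K)=g(K+(-K))=gDK$. These properties are inherited by $\lambda D$ for any $\lambda\geq 0$.

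For the converse, the plan is to determine $\Phi$ successively on singletons, segments, ellipsoids, and finally on all of $\K^n$. Since every $g\in\GL(n)$ fixes $\{0\}$, covariance gives $g\Phi(\{0\})=\Phi(\{0\})$, and the only nonempty compact convex $\GL(n)$-invariant subset of $\R^n$ is $\{0\}$; hence $\Phi(\{0\})=\{0\}$. Applying covariance to $g=tI$ with $t>0$ yields the positive $1$-homogeneity $\Phi(tK)=t\Phi(K)$, while $g=-I$ gives $\Phi(-K)=-\Phi(K)$.

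The key intermediate step is the computation of $\Phi$ on segments. Fix $K=[0,e_1]$ and let $H\leq\GL(n)$ denote the stabilizer of the vector $e_1$. Every $g\in H$ fixes both endpoints of $K$, so $\Phi(K)$ is $H$-invariant. The orbits of $H$ on $\R^n$ are the singletons of $\R e_1$ together with the single unbounded orbit $\R^n\setminus\R e_1$, so a compact, convex, $H$-invariant set is a (possibly degenerate) segment contained in $\R e_1$; write $\Phi([0,e_1])=[\alpha e_1,\beta e_1]$. Since $-[0,e_1]=[0,e_1]-e_1$ is a translate of $[0,e_1]$, combining $\Phi(-K)=-\Phi(K)$ with translation invariance yields $[\alpha e_1,\beta e_1]=[-\beta e_1,-\alpha e_1]$, forcing $\alpha=-\beta$. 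Setting $\lambda:=\beta\geq 0$, we obtain $\Phi([0,e_1])=\lambda[-e_1,e_1]=\lambda D[0,e_1]$. Transporting via a $g\in\GL(n)$ sending $e_1$ to an arbitrary nonzero $v\in\R^n$, together with translation invariance, we conclude $\Phi(S)=\lambda DS$ for every $1$-dimensional segment $S\in\K^n$.

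Extending $\Phi=\lambda D$ from segments to arbitrary bodies is the main obstacle, since, unlike in the case of Minkowski valuations, we cannot invoke additivity on unions or on Minkowski sums. A natural intermediate case is the class of ellipsoids: the stabilizer of $B_n$ in $\GL(n)$ is $\O(n)$, forcing $\Phi(B_n)=rB_n$ for some $r\geq 0$, and $\GL(n)$-covariance combined with translation invariance then determines $\Phi(E)$ for every ellipsoid $E$. Degenerating the ellipsoid $E_t$ of semi-axes $(1,t,\ldots,t)$ to $[-e_1,e_1]$ as $t\to 0^+$ and using continuity identifies $r=2\lambda$, matching $\lambda DE=2\lambda E$ for centrally symmetric $E$ centered at the origin. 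To cover the rest of $\K^n$, I would attempt an induction on dimension---deforming $K$ by collapsing one principal direction to an $(n-1)$-dimensional body and invoking the inductive hypothesis via continuity---combined with a careful analysis of how the linear stabilizer of $K$ in $\GL(n)$ constrains $\Phi(K)$. The delicate case is that of non-centrally-symmetric bodies such as the simplex, whose stabilizer is a finite group and for which $\GL(n)$-covariance alone only forces $\Phi(K)$ to lie in the (non-trivial) cone of bodies invariant under this finite group; resolving this case in full generality is the heart of the argument.
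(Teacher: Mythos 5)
First, a point of reference: the paper does not prove this statement at all -- Theorem~\ref{thm_ghw} is quoted from Gardner--Hug--Weil \cite{gardner.hug.weil1} -- so there is no internal proof to compare against, and your attempt has to stand on its own. The parts you actually carry out are correct: the ``if'' direction, $\di(\{0\})=\{0\}$, positive $1$-homogeneity and oddness from diagonal matrices, the orbit analysis showing $\di([0,e_1])\subset\R e_1$, the symmetry argument forcing $\di(S)=\lambda DS$ on all segments, and the determination of $\di$ on ellipsoids with the consistency check $r=2\lambda$ via degeneration.

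However, there is a genuine gap, and it is exactly where you say ``the heart of the argument'' lies: nothing in your proposal extends $\di=\lambda D$ from segments and ellipsoids to a general convex body, and the strategy you sketch cannot work as described. Continuity only propagates information \emph{toward} degenerate limits: knowing $\di$ on the (nowhere dense) set of lower-dimensional bodies and on ellipsoids puts no constraint on $\di(K)$ for a full-dimensional $K$ outside those classes, so ``collapsing one principal direction and invoking the inductive hypothesis via continuity'' determines the limit body's image from its neighbours, not the other way around. Likewise, for a generic $K$ the linear stabilizer is trivial, so covariance yields only relations between $\di(K)$ and $\di(gK)$ for \emph{other} bodies $gK$, never a constraint on $\di(K)$ itself; and since the theorem deliberately does not assume the valuation property, there is no additivity with which to assemble $\di(K)$ from the images of simpler pieces. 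This is precisely why the result is nontrivial (and why the present paper singles it out as a characterization \emph{without} the Minkowski-valuation hypothesis): the published proof in \cite{gardner.hug.weil1} has to do substantial additional work at this step, and your proposal replaces that work with an acknowledged placeholder. As it stands, the converse direction is proved only on segments and ellipsoids.
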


We would like to stress that Theorem~\ref{thm_ghw} does not require the property of being a Minkowski valuation, but requires $\GL(n)$-covariance. 
We recall that $\di$ is said to be {\em covariant} with respect to a group $G$ of transformations of 
$\R^n$ if
$$\di(g(K))=g(\di(K)),\quad\forall\, K\in\K^n,\;\forall\, g\in G.$$

The first works about characterization of Minkowski valuations were obtained by Schneider in \cite{schneider74} and \cite{schneider74.dim2}. He obtained significant classification results for a special type of Minkowski valuations, called {\em Minkowski endomorphism}, which are defined as the  continuous Minkowski valuations that are homogeneous of degree 1, commute with rotations, and are translation invariant. The difference body operator constitutes the fundamental example of a Minkowski endomorphism. 

Ludwig's works \cite{ludwig02, ludwig05} represent the starting point for a systematic study of characterization results in the theory of Minkowski valuations. 
Concerning the difference body operator, she obtained the following fundamental characterization result.
\begin{teor}[\cite{ludwig05}]\label{t: ludwig class DK}Let $n\geq 2$. An operator $\di: \K^n\longrightarrow\K^n$ is a continuous, translation invariant, and 
$\SL(n)$-covariant Minkowski valuation if and only if there is a $\lambda\geq 0$ such that $\di(K)=\lambda DK$ for every $K\in\K^n$.
\end{teor}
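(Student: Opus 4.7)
The reverse implication is routine: $K\mapsto\lambda DK=\lambda K+\lambda(-K)$ is a Minkowski sum of the Minkowski valuations $K\mapsto K$ and $K\mapsto -K$, hence itself a Minkowski valuation; continuity and translation invariance are immediate, and $\SL(n)$-covariance follows from $D(gK)=gK+g(-K)=g\,DK$ for $g\in\SL(n)$.

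For the forward direction, my first step is to invoke the Minkowski-valued analogue of McMullen's decomposition for continuous, translation invariant Minkowski valuations, writing
\[
\di=\di_0+\di_1+\cdots+\di_n,
\]
with each $\di_i$ a continuous, translation invariant Minkowski valuation satisfying $\di_i(tK)=t^i\di_i(K)$ for $t>0$. Uniqueness of this decomposition propagates $\SL(n)$-covariance to every summand $\di_i$.

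I then rule out all $\di_i$ with $i\neq 1$. For $i=0$, $\di_0$ is constant and $\SL(n)$-invariant, hence equal to $\{0\}$. For $i=n$, McMullen's theorem applied pointwise to the real-valued valuation $K\mapsto h(\di_n(K),u)$ yields $\di_n(K)=\vol(K)\,L$ for a single body $L\in\K^n$; covariance of $\di_n$ combined with the $\SL(n)$-invariance of $\vol$ forces $L$ to be $\SL(n)$-invariant, hence $\{0\}$. The intermediate cases $2\le i\le n-1$ are the first real obstacle: the idea is to represent $h(\di_i(K),u)$ via mixed volumes with fixed auxiliary bodies (again McMullen), then exploit the action of carefully chosen one-parameter subgroups of $\SL(n)$ (for instance unbounded diagonal or unipotent ones) to force these auxiliary bodies to collapse, giving $\di_i=\{0\}$.

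The heart of the proof is the analysis of $\di_1$. By continuity it suffices to determine $\di_1$ on polytopes, and the valuation identity together with $1$-homogeneity and translation invariance reduces the problem further to the value of $\di_1$ on a single full-dimensional simplex. Fixing $T=[0,e_1,\dots,e_n]$, the affine stabilizer of $T$ in $\SL(n)\ltimes\R^n$ acts on $\di_1(T)$ through its image in $\SL(n)$ (since translations act trivially by translation invariance), and a careful combination of this symmetry with the valuation identity applied to natural dissections of $T$ should force $\di_1(T)$ to be a scalar multiple of $DT$. Then $\SL(n)$-covariance extends the identity $\di_1=\lambda D$ to every full-dimensional simplex and, via the reduction above, to all of $\K^n$. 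I expect this simplex analysis to be the second main obstacle: the linear stabilizer of $T$ inside $\SL(n)$ is only the alternating group on the vertices (the odd vertex permutations having determinant $-1$), so invariance under this group alone is insufficient in low dimensions (consider $n=2$, where any rotationally symmetric planar body would qualify), and the dissection identities must be used in concert with the group action to pin the shape of $\di_1(T)$ down as exactly $\lambda DT$.
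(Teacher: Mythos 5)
This statement is Theorem~\ref{t: ludwig class DK}, quoted from \cite{ludwig05}; the paper offers no proof of it, so your proposal can only be measured against Ludwig's original argument. As it stands, your proof has a fatal first step. The ``Minkowski-valued analogue of McMullen's decomposition'' that you invoke --- writing $\di=\di_0+\di_1+\dots+\di_n$ with each $\di_i$ a continuous, translation invariant, $i$-homogeneous \emph{Minkowski} valuation --- does not exist in general. This is precisely the point the present paper makes in Section~2: the McMullen decomposition applies to the real-valued valuations $K\mapsto h(\di(K),u)$, producing functions $f_j(K,\cdot)$ that are in general only \emph{differences} of support functions, and the existence of a genuine decomposition into homogeneous Minkowski valuations is equivalent to all the $f_j(K,\cdot)$ being convex, which Parapatits and Wannerer showed can fail. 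So your entire reduction to homogeneous summands $\di_i$ needs to be rebuilt at the level of the (not necessarily convex) functions $f_j$, and your argument for killing the degrees $2\le i\le n-1$ --- already only a declaration of intent (``exploit one-parameter subgroups \dots to force these auxiliary bodies to collapse'') --- loses even its starting point.

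The second gap is that the part you yourself identify as the heart of the proof is not carried out. A continuous, translation invariant, $1$-homogeneous valuation on $\K^n$ is \emph{not} determined by its value on a single full-dimensional simplex: by inclusion--exclusion on simplicial decompositions one needs its values on $k$-dimensional simplices for every $k\in\{1,\dots,n\}$, and $\SL(n)$ together with dilations acts transitively only within each fixed dimension $k$. Ludwig's actual mechanism is to dissect a simplex by a hyperplane through a point depending on a parameter $s$, observe that the two pieces are $\SL(n)$-images of dilated simplices, and thereby derive a Cauchy-type functional equation for $s\mapsto h(\di(sT^k),u)$ at suitable vectors $u$; solving these equations (using continuity, or measurability) is what pins down $\di$ on each $T^k$ and yields $\lambda DK$. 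Your sketch replaces this with ``a careful combination of this symmetry with the valuation identity \dots should force $\di_1(T)$ to be a scalar multiple of $DT$,'' which names the right ingredients but supplies no argument; until the functional equations are written down and solved, and the decomposition issue above is repaired, the proposal is not a proof.
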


After the seminal results of Ludwig, an intensive investigation of Minkowski valuations has been launched, which has led to characterization results, for other groups of transformations or for certain subfamilies of $\K^n$. The corresponding results can be found in  
\cite{abardia.bernig,abardia,boroczky.ludwig,kiderlen,schuster.wannerer.smooth,schuster.wannerer16,haberl,schuster.10,schuster.wannerer12,wannerer.equiv} and references therein.

\subsection{Results of the present paper}
We denote by $\MVal$ the space of continuous and translation invariant Minkowski valuations and by $\MVal^s$ 
the subspace of $\MVal$ consisting of Minkowski valuations with symmetric image. The Steiner point of $K$ is denoted by $\st(K)$. We refer the reader to Section~\ref{preliminaries} for further notation and definitions.

As described above, in the present paper, we consider the general question of describing the operators in $\MVal$ satisfying the (VC) condition {\it without any further hypothesis}. Our main result can be stated as follows:

\begin{theorem}\label{teo}
Let $n\geq 2$ and consider $\di\in\MVal$ satisfying (VC). 
Then exactly one of the following possibilities occurs:
\begin{enumerate}\itemsep10pt
\item[\emph{(i)}] there exist $\di_1\in\MVal$ homogeneous of degree 1 and $p:\K^n\longrightarrow\R^n$ continuous and translation invariant valuation 
such that
$$
\di(K)=\di_1(K)+p(K),\quad\forall\,K\in\K^n;
$$
\item[\emph{(ii)}] there exist a segment $S$, an $(n-1)$-dimensional convex body $L$ with $\dim (L+S)=n$, and $p:\K^n\longrightarrow\R^n$ continuous and 
translation invariant valuation such that 
$$
\di(K)= L+\vol(K)S+p(K),\quad\forall\,K\in\K^n.
$$
\end{enumerate}
\end{theorem}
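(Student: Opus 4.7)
The plan is to combine McMullen's homogeneous decomposition of continuous translation invariant valuations with the rigidity imposed by (VC). First I would center the operator: set $p(K) := \st(\di(K))$. Since the Steiner point is a continuous translation equivariant $\R^n$-valued valuation and $\di$ is translation invariant, $p$ is a continuous translation invariant $\R^n$-valued valuation; then $\dib(K) := \di(K) - p(K)$ is a continuous translation invariant Minkowski valuation satisfying (VC), and it suffices to prove the dichotomy for $\dib$, reinstating $p$ at the end.

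Next, McMullen's decomposition applied to the continuous translation invariant $C(\sfe)$-valued valuation $K \mapsto h_{\dib(K)}$ gives $h_{\dib(K)}(u) = \sum_{i=0}^n \vv_i(K, u)$ with $\vv_i(\cdot, u)$ a continuous translation invariant $i$-homogeneous real-valued valuation. The extremal components are explicit: $\vv_0(K, u) = h_{L_0}(u)$ with $L_0 := \dib(\{0\})$, and by the Hadwiger classification of $n$-homogeneous real-valued valuations, $\vv_n(K, u) = \vol(K) g(u)$ for some $g \in C(\sfe)$, which is necessarily the support function $h_M$ of a convex body $M$ (since $\dib(tK)/t^n$ has Hausdorff limit $\vol(K) M$ as $t \to \infty$). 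Applying McMullen to the real-valued valuation $K \mapsto \vol(\dib(K))$ yields $\vol(\dib(tK)) = \sum_{j=0}^n F_j(K) t^j$; the (VC) inequalities $0 \leq \vol(\dib(tK)) \leq C t^n \vol(K)$ divided by $t^n$ and evaluated as $t \to 0^+$ force $F_j \equiv 0$ for $j < n$, and Hadwiger then gives $F_n(K) = \lambda \vol(K)$ with $\lambda \in [c, C]$. The rigidity identity $\vol(\dib(K)) = \lambda \vol(K)$ will be the key input for the dichotomy.

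The two cases are distinguished by whether $M$ is a point. If $M$ is a point, $\vv_n$ is linear in $u$ and so contributes only a translation, which I would absorb into $p$. Matching coefficients of the polynomial identity $\vol(\dib(tK)) = \lambda t^n \vol(K)$ via mixed-volume expansions then forces $\vv_i \equiv 0$ for $2 \leq i \leq n - 1$ and $L_0$ to be a single point, giving case (i) with $\dib$ purely 1-homogeneous. If $M$ is not a point, the scaled summand $t^n \vol(K) M$ of $\dib(tK)$ generates mixed volumes $V(\cdots, M[\ell], \cdots)$ contributing monomials $t^j$ with $j > n$ into $\vol(\dib(tK))$; forcing every such mixed volume with $\ell \geq 2$ to vanish characterizes $M$ as a segment $S$. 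Similar bookkeeping forces $\vv_i \equiv 0$ for $1 \leq i \leq n - 1$, and the $t^n$-coefficient $n V(L_0[n-1], S) \vol(K) = \lambda \vol(K)$ forces $L_0$ to be $(n-1)$-dimensional with $\dim(L_0 + S) = n$, giving case (ii).

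The hardest part will be this dichotomy step: the intermediate components $\vv_i$ for $2 \leq i \leq n - 1$ need not individually be support functions of convex bodies, so the mixed-volume expansion of $\vol(\dib(tK))$ cannot be read off directly from a Minkowski-sum decomposition of $\dib(tK)$. I would handle this by polarizing each $\vv_i$ to a symmetric $i$-linear functional, applying the valuation property on Minkowski sums $K_1 + \cdots + K_m$, and extracting multi-index coefficient identities from the resulting polynomial in several variables; the needed vanishing conclusions reduce to classical characterizations of segments (or points) via degenerate mixed volumes. Verifying along the way that the various linear-in-$u$ pieces arising assemble into continuous translation invariant $\R^n$-valued valuations that can be absorbed into $p$ is routine and completes the reduction to the stated form.
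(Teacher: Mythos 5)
Your overall strategy (McMullen decomposition of $h(\di(\cdot),u)$, expansion of $\vol(\di(tK))$ as a polynomial in $t$, and using (VC) to kill all coefficients except the one of $t^n$) is the same as the paper's, and the conclusion that $\vol(\di(tK))$ is a monomial of degree $n$ is correct (Corollary~\ref{mixedZero}). But there are two genuine gaps. First, $K\mapsto\vol(\dib(K))$ is \emph{not} a valuation: the valuation property of $\dib$ is with respect to Minkowski addition, under which volume is not additive, so neither McMullen's decomposition nor Hadwiger's theorem applies to it. Consequently your ``rigidity identity'' $\vol(\dib(K))=\lambda\vol(K)$ with a constant $\lambda$ is false; the difference body already refutes it, since $\vol(DK)/\vol(K)$ equals $3$ on triangles and $4$ on squares in the plane. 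The coefficient of $t^n$ is a sum of mixed volumes of the components $f_j(K,\cdot)$, not an $n$-homogeneous valuation in $K$, so declaring this identity ``the key input for the dichotomy'' builds the argument on a false premise.

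Second, the dichotomy itself is the hard core of the theorem and your proposal does not actually establish it. ``Matching coefficients'' of the polynomial $\vol(\dib(tK))$ only tells you that certain \emph{sums} of mixed volumes vanish; since the intermediate $f_j(K,\cdot)$ are merely differences of support functions, the individual mixed volumes can be negative and termwise vanishing does not follow. You acknowledge this but the proposed fix (polarizing each $\vv_i$ and extracting multi-index identities) does not address the sign problem and gives no mechanism to exclude, say, $\dim L_0=2$ with $n=5$ together with nontrivial intermediate components. The paper's resolution is to restrict first to zonoids, where by Parapatits--Schuster (Theorem~\ref{suppZonoid}) every component $f_j(Z,\cdot)$ \emph{is} a support function, so the relevant mixed volumes are nonnegative and termwise vanishing follows (Corollary~\ref{each_zero}); this is then combined with a lengthy combinatorial analysis of $\di_k(S_{\sigma_1}+\dots+S_{\sigma_k})$ for sums of segments (Claims~1--4 in the proof of Theorem~\ref{L0Ln}) to force $(a_0,\dots,a_n)$ to be one of the two trivial solutions, and with further work (Lemmas~\ref{f_i=0 esencia}--\ref{l: fi(K,u)=0} and the difference-body symmetrization in Theorem~\ref{di_1Z=0_noOSym}) to transfer the linearity of the intermediate components from zonotopes to arbitrary bodies. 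None of this machinery is replaced by anything workable in your outline. The Steiner-point centering at the start is harmless but also does not shortcut any of this, since the real content is precisely that the intermediate components are linear in $u$.
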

In Sections~\ref{n-1} and~\ref{1} the operator $p:\K^n\longrightarrow\R^n$ is described more explicitly, as a sum of valuations with fixed degree. 

If we additionally assume that the operator $\di$ has symmetric images, then $p(K)$ is the origin, for every $K\in\K^n$, and we obtain the following.
\begin{theorem}\label{cor}
Let $n\geq 2$ and consider $\di\in\MVal^s$ satisfying (VC). 
Then exactly one of the following possibilities occurs:
\begin{enumerate}\itemsep10pt
\item[\emph{(i)}] $\di$ is homogeneous of degree one;
\item[\emph{(ii)}] there exist a centered segment $S$ and an $o$-symmetric $(n-1)$-dimensional convex body $L$ with $\dim (L+S)=n $ such that  
$$\di(K)= L+\vol(K)S,\quad\forall K\in\K^n.$$
\end{enumerate}
\end{theorem}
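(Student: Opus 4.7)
The plan is to derive Theorem~\ref{cor} directly from Theorem~\ref{teo} by leveraging the Steiner point $\st:\K^n\to\R^n$, which is continuous, Minkowski additive (so $\st(K+L)=\st(K)+\st(L)$), $1$-homogeneous, and vanishes on origin-symmetric bodies. The hypothesis $\di\in\MVal^s$ gives $-\di(K)=\di(K)$ and hence $\st(\di(K))=0$ for every $K\in\K^n$; applied to the two alternatives of Theorem~\ref{teo} this will pin down the translation-valued summand $p(K)$.

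First I apply Theorem~\ref{teo} to $\di$. In its alternative (i), $\di(K)=\di_1(K)+p(K)$ with $\di_1\in\MVal$ homogeneous of degree~$1$. Minkowski additivity of $\st$ together with $\st(\di(K))=0$ gives $p(K)=-\st(\di_1(K))$, so
\begin{equation*}
\di(K)=\di_1(K)-\st(\di_1(K)),\qquad\forall\,K\in\K^n.
\end{equation*}
Using $\st(\lambda K)=\lambda\st(K)$ for $\lambda\geq 0$ and the $1$-homogeneity of $\di_1$, one checks at once that $\di$ is itself $1$-homogeneous, which is conclusion (i) of Theorem~\ref{cor}.

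In alternative (ii) of Theorem~\ref{teo}, $\di(K)=L+\vol(K)\,S+p(K)$. The same Steiner point computation forces $p(K)=-\st(L)-\vol(K)\,\st(S)$. Setting $\tilde L:=L-\st(L)$ and $\tilde S:=S-\st(S)$, we obtain
\begin{equation*}
\di(K)=\tilde L+\vol(K)\,\tilde S,\qquad\forall\,K\in\K^n,
\end{equation*}
with $\tilde S$ still a segment and $\dim(\tilde L+\tilde S)=n$. It remains to show that $\tilde L$ and $\tilde S$ are $o$-symmetric. Using the additivity of support functions, $o$-symmetry of $\di(K)$ translates into
\begin{equation*}
\bigl(h_{\tilde L}(u)-h_{\tilde L}(-u)\bigr)+\vol(K)\bigl(h_{\tilde S}(u)-h_{\tilde S}(-u)\bigr)=0,\qquad\forall\,u\in\sfe,\;\forall\,K\in\K^n.
\end{equation*}
Evaluating at a single point $K$ (where $\vol(K)=0$) yields $\tilde L=-\tilde L$, and then choosing any $K$ with $\vol(K)>0$ yields $\tilde S=-\tilde S$. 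Thus $\tilde L$ is an $o$-symmetric $(n-1)$-dimensional convex body and $\tilde S$ is a centered segment, matching conclusion (ii) of Theorem~\ref{cor} after renaming $\tilde L\mapsto L$, $\tilde S\mapsto S$.

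The argument is essentially bookkeeping once Theorem~\ref{teo} is in hand: no substantial new obstacle appears, since the symmetry assumption is strong enough to determine $p$ through a single linear functional, the Steiner point. The one spot that deserves mild care is the final step in case~(ii), where the freedom to let $\vol(K)$ take both the value $0$ (singleton) and arbitrarily positive values is what separates the two constraints on $\tilde L$ and $\tilde S$ out of the single identity produced by $o$-symmetry.
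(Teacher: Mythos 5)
Your proof is correct and follows essentially the same route as the paper: deduce the statement from Theorem~\ref{teo} and use the $o$-symmetry of the images to dispose of the point-valued summand $p$. The only cosmetic difference is that you eliminate $p$ by applying the Steiner point to $\di(K)$ and re-centering, whereas the paper observes (via the evenness of the McMullen components, Lemma~\ref{even}) that the only $o$-symmetric point is the origin, so that $p\equiv 0$ directly.
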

We would like to remark that Theorem~\ref{teo} constitutes, to the best of our knowledge, the first characterization result in the theory of Minkowski valuations which does not assume the operator to be invariant, covariant or contravariant with respect to some subgroup of $\GL(n)$. 

A description of the 1-homogeneous Minkowski valuations appearing in Theorem~\ref{cor}(i) was given in~\cite{abardia.colesanti.saorin1} in the context of Minkowski additive operators (i.e.,  continuous, 1-homoge\-ne\-ous, and translation invariant Minkowski valuations). There, the Minkowski endomorphisms satisfying the (VC) condition, and the Minkowski additive operators that satisfy (VC) and are monotonic were classified. 

Theorem~\ref{teo} allows us to improve these results by removing the homogeneity hypothesis and obtain the following.
\begin{theorem}\label{+mon intro2}
Let $n\geq 2$. An operator $\di\in\MVal$ satisfies (VC) and is monotonic if and only if exactly one of the following possibilities occurs:
\begin{enumerate}
\item[\emph{(i)}] there are $g\in\GL(n)$ and $p\in\R^n$ such that $\di(K)=gDK+p$ for every $K\in\K^n$;
\item[\emph{(ii)}] there are $L,S\in\K^n$ with $0\in S$, $\dim S=1$, $\dim L=n-1$, and $\dim(L+S)=n$ such that $\di(K)=L+\vol(K)S$ for every $K\in\K^n$.
\end{enumerate}
\end{theorem}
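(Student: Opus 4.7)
The plan is to apply Theorem~\ref{teo} to split $\di$ into its two structural types, and then use monotonicity to kill the translation-valued correction $p$ appearing in each case. The reverse implication is direct: in (i), $V_n(gDK + p) = |\det g|\,V_n(DK)$ together with (RS) gives (VC), and monotonicity of $D$ is standard; in (ii), the dimension hypotheses make $V_n(L + \lambda S)$ linear in $\lambda$, so $V_n(\di(K)) = V_n(L+S)\,V_n(K)$, while $0 \in S$ yields $V_n(K)\,S \subseteq V_n(K')\,S$ whenever $V_n(K) \leq V_n(K')$.

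For the forward direction in Case (i), $\di(K) = \di_1(K) + p(K)$ with $\di_1 \in \MVal$ one-homogeneous. Write $p = \sum_{k=0}^n p_k$ by McMullen's decomposition applied coordinatewise. Monotonicity applied to $K \subseteq \lambda K$ for $0 \in K$ and $\lambda \geq 1$ yields, in support functions,
\[
(\lambda - 1)\,h_{\di_1(K)}(u) + \sum_{k=1}^n (\lambda^k - 1)\,\langle u, p_k(K) \rangle \;\geq\; 0, \qquad u \in \sfe,\ \lambda \geq 1,
\]
noting that the $k = 0$ term vanishes identically. As a polynomial in $\lambda$ of degree at most $n$, inspecting the leading coefficient as $\lambda \to \infty$ together with the substitution $u \mapsto -u$ kills $p_n$; iterating the peeling down to degree $2$ gives $p_k \equiv 0$ for every $k \geq 2$. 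Thus $p = p_0 + p_1$, and $\tilde{\di}_1 := \di_1 + p_1$ is a Minkowski additive operator inheriting (VC) and monotonicity from $\di = \tilde{\di}_1 + p_0$. The classification of monotonic Minkowski additive operators satisfying (VC) in~\cite{abardia.colesanti.saorin1} then yields $\tilde{\di}_1(K) = gDK$ for some $g \in \GL(n)$, so $\di(K) = gDK + p_0$.

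In Case (ii), $\di(K) = L + V_n(K)\,S + p(K)$ with $\dim L = n - 1$ and, writing $S = [\alpha v, \beta v]$ with $v \in \sfe$ and $\alpha \leq 0 \leq \beta$, a segment containing the origin. Applying monotonicity to $\{0\} \subseteq K$ for $K \ni 0$ and using $L + A \subseteq L + B \Leftrightarrow A \subseteq B$ (comparison of support functions) forces $p(\{0\}) - p(K) \in V_n(K)\,S \subseteq \spa(v)$; translation invariance propagates this to all $K$, so $p(K) = p(\{0\}) + \pi(K)\,v$ for a real-valued continuous translation invariant valuation $\pi$. For arbitrary $K \subseteq K'$, monotonicity then reduces to the one-dimensional segment containment
\[
-\beta\bigl(V_n(K') - V_n(K)\bigr) \leq \pi(K') - \pi(K) \leq -\alpha\bigl(V_n(K') - V_n(K)\bigr).
\]
Taking $K = \{0\}$ and $K' = \lambda K_0$ with $V_n(K_0) > 0$, successively dividing by $\lambda$ and letting $\lambda \to 0^+$ in the McMullen expansion $\pi(\lambda K_0) - \pi(\{0\}) = \sum_{k \geq 1} \lambda^k \pi_k(K_0)$ forces $\pi_k \equiv 0$ for $1 \leq k \leq n-1$; the remaining $n$-homogeneous part equals $c\,V_n$ by McMullen's theorem, with $c \in [-\beta, -\alpha]$. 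Absorbing $c\,V_n(K)\,v$ into $V_n(K)\,S$ (the translate $S + c v$ still contains the origin) and the remaining constants into $L$ produces the form in (ii).

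The main difficulty is controlling the vector-valued valuation $p$ without any covariance assumption; the scaling argument combined with the homogeneous decomposition is the key mechanism. Case (ii) additionally requires the preliminary step of trapping $p$ inside $\spa(v)$ via comparison with a point, so that the scaling argument can afterwards be applied to the scalar residue $\pi$.
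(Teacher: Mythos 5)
Your proposal is correct, and the overall architecture (invoke Theorem~\ref{teo}, then use monotonicity under scaling to eliminate the point-valued corrections) is the same as the paper's. In Case~(i) your argument is essentially identical to the paper's: compare $\di(K)\subseteq\di(\lambda K)$ in support functions, peel off the top homogeneous coefficients as $\lambda\to\infty$ to force $p_k=0$ for $k\geq 2$, absorb the $1$-homogeneous point part into $\di_1$, and invoke the classification of monotonic Minkowski additive operators with (VC) from \cite{abardia.colesanti.saorin1}. In Case~(ii), however, you take a genuinely different route. The paper stays entirely at the level of support functions: it applies monotonicity to $\lambda K\subseteq K$, lets $\lambda\to 0^+$ to kill $p_1$, inducts upward to kill $p_2,\dots,p_{n-1}$, and then reads off $0\in S$ from the residual inequality $\lambda^n h(S,u)\leq h(S,u)$. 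You instead first compare $\di(\{0\})\subseteq\di(K)$ and use the cancellation law for Minkowski addition to trap $p(K)-p(\{0\})$ in $\spa S$, reducing everything to a scalar valuation $\pi$; you then kill its low-degree parts by scaling, identify the top part as $cV_n$ via Hadwiger's characterization, and absorb $cV_n(K)v$ into a translate of $S$, with $0\in S+cv$ coming from the constraint $c\in[-\beta,-\alpha]$. Your version is longer, and it relies on the (harmless but worth stating explicitly) normalization that $S$ may be assumed to lie on a line through the origin by absorbing a translate of $S$ into $p$; in exchange it handles a possible $n$-homogeneous point-valued part of $p$ that the paper excludes earlier via Theorem~\ref{di_1Z=0_noOSym}, and it makes transparent exactly where the condition $0\in S$ originates. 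Both arguments are sound.
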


\begin{theorem}\label{+On_dim_geq_32}
Let $n\geq 3$. 
\begin{enumerate}
\item[\emph{(i)}] An operator $\di\in\MVal$ satisfies (VC) and is $\SO(n)$-covariant if and only if there are $a,b\geq 0$ with $a+b>0$ such that 
$$
\di(K)=a(K-\st(K))+b(-K+\st(K)),\quad\forall K\in\K^n.
$$ 
\item[\emph{(ii)}] An operator $\di\in\MVal^{s}$ satisfies (VC) and is $\SO(n)$-covariant if and only if there is a $\lambda>0$ such that $\di(K)=\lambda DK$ for every $K\in\K^n$.
\end{enumerate}
\end{theorem}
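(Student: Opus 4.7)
Sufficiency in (i) and (ii) is a direct verification: each displayed right-hand side is continuous, translation invariant, a Minkowski valuation (using additivity of $\st$), $\SO(n)$-covariant (using $\SO(n)$-equivariance of $\st$), and satisfies (VC) via (RS) applied to $K-\st(K)$. For necessity I plan to invoke Theorem~\ref{teo} in part~(i) and Theorem~\ref{cor} in part~(ii), which split $\di$ into a ``$1$-homogeneous plus translation" alternative and a ``cylinder" alternative of the form $\di(K)=L+\vol(K)S$ (possibly with an added translation $p(K)$). The cylinder alternative is ruled out by evaluating $\di$ at a Euclidean ball $B$ centered at the origin: $\SO(n)$-covariance gives $\rho\,\di(B)=\di(B)$ for every $\rho\in\SO(n)$, forcing $\di(B)$ to be rotation invariant. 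For $n\geq 3$ the only rotation-invariant convex bodies are concentric Euclidean balls (or singletons), whereas $L+\vol(B)S$ is a genuine prism over the $(n-1)$-dimensional body $L$, a contradiction.

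In the surviving alternative, write $\di=\di_1+p$ with $\di_1\in\MVal$ one-homogeneous and $p:\K^n\to\R^n$ a continuous translation-invariant valuation. Normalizing via $\di_1\leftarrow\di_1-\st\circ\di_1$ and absorbing the difference into $p$, I may assume $\st(\di_1(K))=0$, so that $p(K)=\st(\di(K))$. Additivity and $\SO(n)$-equivariance of $\st$ then imply that $\di_1$ is $\SO(n)$-covariant and $p$ is $\SO(n)$-equivariant. Since $p(K)$ is a point, $\vol(\di_1(K))=\vol(\di(K))$, so $\di_1$ inherits (VC). Thus $\di_1$ is a $1$-homogeneous, $\SO(n)$-covariant Minkowski valuation satisfying (VC), and the classification of such operators from~\cite{abardia.colesanti.saorin1} gives $\di_1(K)=a(K-\st(K))+b(-K+\st(K))$ for some $a,b\geq 0$ with $a+b>0$.

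The main obstacle is to show $p\equiv 0$ in part~(i). I would use a McMullen-type decomposition $p=p_0+\cdots+p_n$ into homogeneous parts, each of which remains continuous, translation-invariant, and $\SO(n)$-equivariant. The extremes $p_0$ (a constant vector) and $p_n$ (a vector times $\vol$) vanish because, for $n\geq 3$, the only vector in $\R^n$ fixed by all of $\SO(n)$ is the origin. For $1\leq k\leq n-1$, the Goodey-Weil (for $k=1$) and Alesker-type (general $k$) representations write $p_k$ as an integral of support-function expressions against a vector-valued distribution on $\sfe$; $\SO(n)$-equivariance of this distribution together with the irreducibility of the standard representation of $\SO(n)$ forces it to be a scalar multiple of the map $u\mapsto u$, and the translation-invariance constraint then kills the remaining scalar. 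Part~(ii) follows quickly from part~(i): $\di\in\MVal^{s}$ gives $\st(\di(K))=0$ automatically (the Steiner point of an $o$-symmetric body is the origin), so $p\equiv 0$ for free, and imposing $\di(K)=-\di(K)$ on the formula above forces $a=b$, yielding $\di(K)=\lambda DK$ with $\lambda=a>0$.
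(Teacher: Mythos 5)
Your overall architecture coincides with the paper's: invoke Theorem~\ref{teo}, eliminate the cylinder alternative by rotation invariance (your evaluation at a ball is a legitimate variant of the paper's $\lambda\to0^+$ argument isolating $L$), reduce to a $1$-homogeneous $\SO(n)$-covariant valuation plus a vector-valued translation part $p$, and finish with the classification of Theorem~\ref{+On_dim_geq_3}. The sufficiency direction, the normalization $\st(\di_1(K))=0$, the inheritance of $\SO(n)$-covariance by $\di_1$ and of $\SO(n)$-equivariance by $p$, the vanishing of $p_0$ and $p_n$, and the deduction of (ii) from (i) are all fine.

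The genuine gap is your treatment of the homogeneous components $p_k$ for $2\le k\le n-1$ (which exist precisely in the range $n\ge 3$ covered by the theorem). For $k=1$ the Goodey--Weil distribution lives on $\sfe$, where the standard representation of $\SO(n)$ occurs with multiplicity one, so your irreducibility argument can be made to work. But for $k\ge 2$ the Goodey--Weil distribution of a $k$-homogeneous valuation is a (vector-valued) distribution on $(\sfe)^k$, and the standard representation occurs in $C^\infty((\sfe)^k)$ with infinite multiplicity (already $H_1\otimes H_0$, $H_0\otimes H_1$, and components of $H_1\otimes H_2$, etc.); equivariance alone therefore does not force the distribution to be ``a multiple of $u\mapsto u$,'' and the additional constraints characterizing Goodey--Weil distributions would have to be brought in, which is essentially the classification problem itself. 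The step as sketched does not close. The paper avoids this entirely with Schneider's inhomogeneous characterization of the Steiner point (Theorem~\ref{schneider_vector}): since $p_k$ is translation invariant and satisfies $p_k(gK)=gp_k(K)$, the map $K\mapsto p_k(K)+\st(K)$ is a continuous, translation covariant, $\SO(n)$-covariant vector-valued valuation, hence equals $\lambda\,\st$ for some $\lambda\ge0$; comparing degrees of homogeneity yields $p_k\equiv0$ for $k\ne1$, and translation invariance kills the degree-one part. Substituting this lemma for your representation-theoretic step repairs the proof with no other changes.
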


\smallskip
To prove our results we rely upon recent developments from the theory of real-valued valuations, which will be recalled in Section~\ref{preliminaries} for the reader's convenience. 
In addition, we need to develop new techniques, since our assumption of satisfying (VC) is of a different nature than typical covariance or contravariance with respect to some subgroup of $\GL(n)$. 
For Theorem~\ref{teo} we perform a careful study of the image of zonotopes under $\di:\K^n\longrightarrow\K^n$, since the lack of covariance does not allow us to use the standard technique of exploiting the image of few simplices. 
For the proof of Theorems~\ref{+mon intro2} and \ref{+On_dim_geq_32}, we use Theorem~\ref{teo} and classical results in the theory of real-valued valuations.

\smallskip
The paper is organized as follows. In Section~\ref{preliminaries} we collect the known results, especially about valuations, that will be used along the paper, and we introduce the notation used throughout. In Sections~\ref{sec: dependence a point} to \ref{1} we prove Theorem~\ref{teo}. More precisely, Section~\ref{sec: dependence a point} is devoted to show that we have actually a dichotomy under the hypotheses of Theorem~\ref{teo}. This leads to either the 1-homogeneous case, or to case (ii)
of Theorem \ref{teo}. In the next two sections we study each case, giving the proof of Theorems~\ref{teo} and~\ref{cor} in Section~\ref{1}. Finally, in Section~\ref{on_mon}, we prove Theorem~\ref{+mon intro2} and Theorem~\ref{+On_dim_geq_32} together with its analogue for $n=2$.
We end the paper with some examples to illustrate the necessity of our assumptions in Theorem~\ref{teo}.

\section{Preliminaries}\label{preliminaries}
\subsection{Notation} 
As usual, we denote by $\R^n$ the $n$-dimensional Euclidean space, equipped with the standard scalar product $\langle\cdot,\cdot\rangle$.

If $A\subset\R^n$ is a measurable set, $V_n(A)$ denotes its volume, that is, its $n$-dimensional Lebesgue measure. If $A\subset\R^n$, the \emph{span} of $A$, $\spa A$, is the vector subspace of $\R^n$ parallel to the minimal affine subspace in $\R^n$ containing $A$. The \emph{dimension of} $A$ is defined as $\dim A:=\dim (\spa A)$. 

The unit sphere of $\R^n$ is denoted by $\sfe$ and we denote by $B^n$ the Euclidean unit ball with volume $\kappa_n$. 
For $p,q\in\R^n$ we write $[p,q]$ for the line segment joining the points $p$ and $q$, and $S_v:=[-v,v]$, $v\in\R^n$, for the line segment joining $-v$ and $v$. 

The general linear group in $\R^n$ is denoted by $\GL(n)$, the special linear group by $\SL(n)$,  the group of orthogonal transformations of $\R^n$ by $\O(n)$ and by $\SO(n)\subset\O(n)$ the group of the orthogonal transformations which preserve orientation.

\subsection{Convex bodies}
For the basics on convex geometry and on the theory of valuations, we refer the reader to the books \cite{gardner.book06,gruber.book,klain.rota,schneider.book14,artstein.giannopoulos.milman}.

Let $\K^n$ denote the set of convex bodies (compact and
convex sets) in $\R^n$ endowed with the Hausdorff metric, and let $\K^n_s$ denote the set of convex bodies in $\R^n$ which are symmetric with respect to the origin. The elements of $\K_s^n$ are called \emph{$o$-symmetric} convex bodies.
We endow $\K^n$ with the Minkowski addition:
$$K+L:=\{x+y\,:\,x\in K,\,y\in L\}.$$
The \emph{support function} of a convex body $K\in\K^n$, $h_K:\R^n\longrightarrow\R$, is given by 
$$h(K, u) = h_K(u)=\max \{ \langle u, x\rangle : x \in K \},\quad u\in\R^n,$$
and it determines $K$ uniquely (\cite[Theorem 1.7.1]{schneider.book14}). 
For every $u\in\R^n$, the function $K\mapsto h(K,u)$ is linear with respect to the Minkowski addition and multiplication by non-negative reals:
\begin{equation}\label{add_sup}
h(\alpha K+\beta L,\cdot)=\alpha h(K,\cdot)+\beta h(L,\cdot),\quad\forall\, K,L\in\K^n,\, \forall\, \alpha,\beta\ge0,
\end{equation}

A \emph{zonotope} is a convex body obtained as the finite sum of line segments and a \emph{zonoid} is a convex body that can be approximated, in the
Hausdorff metric, by zonotopes (see e.g. \cite[p.~191]{schneider.book14}).
Zonotopes will play a prominent role in the proof of Theorem~\ref{teo}.

\subsection{Mixed volumes}\label{sec_mixed_volumes}
We will thoroughly use the notion of mixed volumes of convex bodies, for which we refer to Chapter 5 of \cite{schneider.book14}. 
The mixed volume of $n$ convex bodies $K_1,\dots,K_n$ from $\K^n$ will be denoted by the usual notation:
$$
V(K_1,\dots,K_n).
$$ 
Mixed volumes are multilinear functionals $(\K^n)^n\longrightarrow\R$. In each entry, they are continuous, translation invariant, and satisfy the valuation property (see \eqref{val_prel} for the definition). 

Brackets $[i]$ next to an entry of a mixed volume mean that the entry is repeated $i$ times.

Mixed volumes can be extended to the vector space spanned by restrictions of support functions on $\sfe$ (see \cite[p.~291]{schneider.book14}).
For the proof of Theorem~\ref{teo}, we will use the existence of this extension.
In view of this, we will use both notations, $K$ and $h_K$, as arguments in a mixed volume involving the convex body $K$. In other words, we write equivalently 
$$
V(K,K_2,\dots,K_n)\quad\mbox{or}\quad V(h_K,K_2,\dots,K_n)
$$ 
and interpret the support function as a function restricted to $\sfe$.

Along the paper, and especially in Section 3, we will use the following result, containing conditions for which a mixed volume does not vanish. 

\begin{theorem}[Theorem 5.1.8 in \cite{schneider.book14}]\label{mix_volumes_Schneider}
For $K_1,\dots,K_n\in\K^n$, the following assertions are equivalent:
\begin{enumerate}
\item[\emph{(a)}] $V(K_1,\dots,K_n)>0;$
\item[\emph{(b)}] there are segments $S_i\subset K_i$ $(i=1,\dots,n)$ having linearly independent directions;
\item[\emph{(c)}] $\dim(K_{i_1}+\dots+K_{i_k})\geq k$ for each choice of indices $1\leq i_1<\dots<i_k\leq n$  and for all $k\in\{1,\dots,n\}$.
\end{enumerate}
\end{theorem}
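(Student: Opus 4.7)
My plan is to establish the equivalence as a cycle (b)$\Rightarrow$(a)$\Rightarrow$(c)$\Rightarrow$(b), using only the multilinearity, monotonicity and translation invariance of mixed volumes in each entry, together with a Hall-type transversal argument (Rado's theorem) for the last step, which I expect to be the main obstacle.

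For (b)$\Rightarrow$(a), after translating each entry (mixed volumes being translation invariant in each argument), I may assume $S_i = [0,v_i] \subset K_i$ with $v_1, \ldots, v_n$ linearly independent. Monotonicity of mixed volumes then gives $V(K_1,\ldots,K_n) \geq V(S_1,\ldots,S_n)$. The Minkowski sum $\sum_{i=1}^n \lambda_i S_i$ is the parallelotope spanned by the vectors $\lambda_i v_i$, whose volume equals $\lambda_1\cdots\lambda_n |\det(v_1,\ldots,v_n)|$. Matching this expression with the polynomial expansion $V_n\bigl(\sum_i \lambda_i S_i\bigr) = \sum_{i_1,\ldots,i_n} \lambda_{i_1}\cdots\lambda_{i_n} V(S_{i_1},\ldots,S_{i_n})$ and reading off the coefficient of $\lambda_1 \cdots \lambda_n$ yields $V(S_1,\ldots,S_n) = \tfrac{1}{n!}|\det(v_1,\ldots,v_n)| > 0$.

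For (a)$\Rightarrow$(c) I argue by contraposition. Suppose $\dim(K_{i_1}+\cdots+K_{i_k}) \leq k-1$ for some $i_1 < \cdots < i_k$. By translation invariance of mixed volumes in each entry, I may place all these bodies inside a common linear subspace $H$ of dimension $k-1$. Then the subspace $H' := H + \spa\bigl(\bigcup_{j \notin \{i_1,\ldots,i_k\}} K_j\bigr)$ has dimension at most $(k-1) + (n-k) = n-1$, and every Minkowski combination $\sum_{j=1}^n \lambda_j K_j$ with $\lambda_j \geq 0$ lies inside $H'$, hence has vanishing $n$-dimensional volume. Since $V_n\bigl(\sum_j \lambda_j K_j\bigr) \equiv 0$ as a polynomial in $\lambda$, all of its coefficients vanish; in particular $V(K_1,\ldots,K_n) = 0$.

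For the crucial step (c)$\Rightarrow$(b), I set $L_i := \spa K_i \subset \R^n$, so that (c) becomes: for every $I \subset \{1,\ldots,n\}$, $\dim \sum_{i \in I} L_i \geq |I|$. This is exactly the Hall-type defect condition of Rado's transversal theorem applied to the linear matroid on $\R^n$, which provides linearly independent vectors $v_i \in L_i$ ($i = 1,\ldots,n$). It remains to promote each $v_i$ to an actual segment inside $K_i$: pick any point $p_i$ in the relative interior of $K_i$ within $\mathrm{aff}(K_i)$; since $v_i$ is parallel to $\mathrm{aff}(K_i)$, one has $p_i \pm t v_i \in K_i$ for all sufficiently small $t > 0$, so the segment $S_i := [p_i - t v_i,\, p_i + t v_i] \subset K_i$ has direction $v_i$, closing the cycle.
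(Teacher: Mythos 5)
The paper imports this statement from Schneider's book without proof, so I can only judge your argument on its own terms. Your steps (b)$\Rightarrow$(a) (monotonicity plus the determinant computation for a sum of segments) and (c)$\Rightarrow$(b) (Rado's theorem for the linear matroid, followed by thickening the independent transversal into segments through relative interior points) are correct. The gap is in (a)$\Rightarrow$(c).

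Your dimension count for $H'$ is wrong: from $\dim(K_{i_1}+\dots+K_{i_k})\le k-1$ you correctly find a $(k-1)$-dimensional subspace $H$ containing translates of $K_{i_1},\dots,K_{i_k}$, but $\spa\bigl(\bigcup_{j\notin\{i_1,\dots,i_k\}}K_j\bigr)$ can have dimension up to $n$, not $n-k$, since each remaining body may be full-dimensional. Consequently the claim that $\sum_j\lambda_jK_j$ lies in a proper subspace, hence that the \emph{whole} polynomial $V_n\bigl(\sum_j\lambda_jK_j\bigr)$ vanishes, is false. Concretely, for $n=2$, $K_1=\{0\}$, $K_2=B^2$, condition (c) fails for $I=\{1\}$, yet $\lambda_1K_1+\lambda_2K_2=\lambda_2B^2$ has positive area for $\lambda_2>0$; only the single coefficient $V(K_1,K_2)$ is zero. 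A correct argument must isolate that one coefficient. For instance: translate so that each $K_{i_l}$ contains the origin, hence $K_{i_l}\subset M:=K_{i_1}+\dots+K_{i_k}$ with $\dim M\le k-1$; by monotonicity and nonnegativity of mixed volumes it suffices to show $V(M[k],K_{j_{k+1}},\dots,K_{j_n})=0$. Writing $N:=K_{j_{k+1}}+\dots+K_{j_n}$ and $H:=\spa M$, the bound $V_n(sM+N)\le V_{k-1}\bigl((sM+N)|H\bigr)\cdot V_{n-k+1}\bigl(N|H^{\perp}\bigr)$ shows that the polynomial $s\mapsto V_n(sM+N)$ has degree at most $k-1$; since all its coefficients are nonnegative and the coefficient of $s^k$ dominates a positive multiple of $V(M[k],K_{j_{k+1}},\dots,K_{j_n})$, that mixed volume vanishes. (Equivalently, one may invoke the fact, used elsewhere in the paper as Corollary 6.3.2 of Schneider, that a translation invariant $k$-homogeneous valuation vanishes on bodies of dimension less than $k$.)
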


\subsection{Valuations}
Let $(\A,+)$ be an Abelian semigroup. A map $\gv:\K^n\longrightarrow(\A,+)$ is called  \emph{valuation} if 
\begin{equation}\label{val_prel}
\gv(K)+\gv(L)=\gv(K\cup L)+\gv(K\cap L),
\end{equation}
for every $K,L\in\K^n$ such that $K\cup L\in\K^n$.

We say that $\gv$ is \emph{translation invariant} if $\gv(K+t)=\gv(K)$ for all $K\in\K^n$ and $t\in\R^n$. If $\A$ is a topological space, we say that $\gv:\K^n\longrightarrow\A$ is \emph{continuous}, if it is continuous with respect to the Hausdorff topology on $\K^n$. 
If there is a multiplication between the positive real numbers and the elements in $\A$, then we say that $\gv:\K^n\longrightarrow\A$ is \emph{homogeneous of degree} $j$, if $\gv(\lambda K)=\lambda^{j}\gv(K)$ for all $\lambda\in(0,\infty)$. If $\A$ is ordered, then $\gv$ is \emph{monotonic} (increasing with respect to set inclusion) if for all $K,L\in\K^n$ such that $K\subset L$, then $\gv(K)\leq\gv(L)$. A valuation is called \emph{even} if $\gv(-K)=\gv(K)$ for all $K\in\K^n$. If $(-1)\cdot\gv(K)=:-\gv(K)$ is defined for all $K$, then we say that $\gv$ is \emph{odd} if $\gv(-K)=-\gv(K)$ for every $K\in\K^n$, and $\gv$ is called an $\emph{$o$-symmetrization}$ if $\gv(K)\in\K^n_s$ for every $K\in\K^n$, that is,
\[
\gv(K)=-\gv(K) \text{ for all } K\in\K^n.
\] 
Finally, if a group of transformations $G$ acts on $\K^n$ and on $\A$, we say that a valuation $\gv:\K^n\longrightarrow\A$ is \emph{$G$-covariant} if, for any $K\in\K^n$, 
\[
\di(g K)=g\di(K) \text{ for all } g\in \,G.
\]

\subsubsection{Real-valued valuations}
These are the valuations $\rv$ on $\K^n$ having $(\R,+)$, the real numbers with the usual addition, as target space. We denote by 
$\Val$ the space of real-valued valuations, which are additionally continuous and translation invariant; this is in fact a Banach space. The subspace of valuations 
homogeneous of degree $j$, $j\in\{0,\dots,n\}$, is denoted by $\Val_j$. 

McMullen proved the following fundamental decomposition result of the space $\Val$.
\begin{theorem}[\cite{mcmullen77}]\label{mcmullen_dec:teo} For every $\rv\in\Val$ there exist unique $\rv_j$,
$j=0,\dots,n$, with $\rv_j\in\Val_j$ for every $j$, such that
$$
\rv=\sum_{j=0}^n \rv_j.
$$
In other words
$$
\Val=\bigoplus_{j=0,\ldots,n} \Val_j.
$$
\end{theorem}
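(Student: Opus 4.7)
The plan is to establish that for every $\rv\in\Val$ and every $K\in\K^n$, the map $\lambda\mapsto\rv(\lambda K)$ extends from $\lambda>0$ to a polynomial in $\lambda$ of degree at most $n$. Granting this, the homogeneous components are defined by reading off the coefficients: $\rv_j(K)$ will be the coefficient of $\lambda^j$ in $\rv(\lambda K)$. The remainder of the proof is verifying that these coefficients inherit from $\rv$ the properties of being continuous translation-invariant valuations, and checking uniqueness.

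The heart of the proof, and the main obstacle, is the polynomial expansion on polytopes, due to McMullen. Concretely, I would first prove: for any polytopes $P_1,\dots,P_r\in\K^n$, the function
$$
(\lambda_1,\dots,\lambda_r)\longmapsto \rv(\lambda_1 P_1+\dots+\lambda_r P_r)
$$
is a polynomial in the nonnegative variables $\lambda_i$ of total degree at most $n$. The argument proceeds by induction on $n$ and $r$. The inductive step dissects $\lambda_1 P_1+\dots+\lambda_r P_r$ according to the facial structure of the Minkowski sum and applies the valuation identity \eqref{val_prel} together with translation invariance, reducing contributions of higher-dimensional strata to lower-dimensional ones via faces, where the inductive hypothesis applies. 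Setting $r=1$ and $P_1=P$ then yields that $\rv(\lambda P)$ is a polynomial in $\lambda$ of degree at most $n$ for every polytope $P$.

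Next, to pass from polytopes to arbitrary $K\in\K^n$, pick $n+1$ distinct values $0<\lambda_0<\lambda_1<\dots<\lambda_n$. Polynomial interpolation (Vandermonde) expresses each coefficient of $\rv(\lambda K)$ as an explicit $\R$-linear combination of the values $\rv(\lambda_0 K),\dots,\rv(\lambda_n K)$. Approximating $K$ in the Hausdorff metric by polytopes and using continuity of $\rv$, the polynomial behavior of $\lambda\mapsto\rv(\lambda K)$ extends to all of $\K^n$, and the interpolation formula defines
$$
\rv_j(K):= \mbox{coefficient of }\lambda^j\mbox{ in }\rv(\lambda K),\qquad j=0,\dots,n.
$$
Each $\rv_j$ is manifestly homogeneous of degree $j$. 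Because the interpolation weights are fixed scalars and $\rv$ is a continuous, translation-invariant valuation, $\rv_j$ also is continuous, translation invariant, and satisfies \eqref{val_prel}; hence $\rv_j\in\Val_j$.

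For uniqueness, if $\sum_{j=0}^n \widetilde\rv_j = \sum_{j=0}^n\rv_j$ with $\widetilde\rv_j,\rv_j\in\Val_j$, then setting $K$ fixed and replacing $K$ by $\lambda K$ yields $\sum_{j=0}^n \lambda^j(\widetilde\rv_j(K)-\rv_j(K))=0$ for every $\lambda>0$, forcing $\widetilde\rv_j(K)=\rv_j(K)$ for every $j$ and every $K$. The principal difficulty is therefore not the decomposition mechanism, which is essentially a polynomial interpolation argument, but the polynomial expansion on polytopes; its combinatorial proof based on the valuation identity, inclusion-exclusion, and induction over the face lattice of Minkowski sums is the technically demanding ingredient.
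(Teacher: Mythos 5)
This statement is quoted in the paper from McMullen's 1977 article and is not proved there, so there is no in-paper argument to compare against; what you have written is, in outline, the standard proof of McMullen's decomposition theorem (it is also the proof in \cite{mcmullen77} and in Section~6.3 of \cite{schneider.book14}). Your reduction steps are all sound: once polynomiality of $\lambda\mapsto\rv(\lambda P)$ of degree at most $n$ is known for polytopes, the Vandermonde interpolation defines $\rv_j(K)$ as a fixed linear combination of the continuous, translation-invariant valuations $K\mapsto\rv(\lambda_i K)$, which immediately gives membership in $\Val$, and homogeneity and uniqueness follow from comparing coefficients of a polynomial identity valid for all $\lambda>0$. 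The one place where your write-up is a sketch rather than a proof is exactly the step you flag: the polynomial expansion of $\rv(\lambda_1P_1+\dots+\lambda_rP_r)$ on polytopes. Two points there deserve explicit mention. First, applying the valuation identity across the facial subdivision of a Minkowski sum requires the inclusion--exclusion principle for valuations over polytopal complexes, i.e.\ the extension of $\rv$ to finite unions of polytopes (Volland/Groemer); the defining identity \eqref{val_prel} only covers two bodies with convex union, so this extension is a genuine prerequisite, not a formality. Second, translation invariance is what caps the degree at $n$ and makes the lower-dimensional strata tractable in the induction, so it must be threaded through the inductive step, as you indicate. With those ingredients supplied, your argument is complete and is the same route as the cited source.
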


\medskip
The next result provides useful information on the image of a homogeneous valuation in $\Val_j$ which vanishes on convex bodies of certain dimensions.

\begin{theorem}[\cite{klain00,schneider_schuster06}] \label{th_lemma}
Let $\mu\in\Val_j$, $j\in\{0,1,\dots,n-1\}$.
\begin{enumerate}
\item[\emph{(i)}]
If $\mu(K)=0$ for every $K\in\K^n$ with $\dim K= j$, then $\mu(-K)=-\mu(K)$ for every $K\in\K^n$. In particular, $\mu$ is odd.
\item[\emph{(ii)}] If $\mu(K)=0$ for every $K\in\K^n$ with $\dim K= j+1$, then $\mu\equiv 0$.
\end{enumerate}
\end{theorem}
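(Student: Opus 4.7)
The plan is to reduce everything to Klain's theorem by splitting $\mu$ into its even and odd parts. Setting $\mu^\pm(K) := \tfrac{1}{2}(\mu(K)\pm\mu(-K))$, both $\mu^\pm$ belong to $\Val_j$, with $\mu^+$ even, $\mu^-$ odd, and $\mu=\mu^++\mu^-$. The main external input for the even part is Klain's theorem from \cite{klain00}: by Hadwiger's theorem, the restriction of any even $\nu\in\Val_j$ to a $j$-dimensional linear subspace $E$ is a scalar multiple of $j$-dimensional Lebesgue measure, and the resulting Klain function on the Grassmannian $\mathrm{Gr}(j,n)$ determines $\nu$ uniquely.

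For part (i), if $\dim K=j$ then $\dim(-K)=j$ as well, so the hypothesis gives $\mu(K)=\mu(-K)=0$, hence $\mu^+(K)=0$ for every $j$-dimensional body $K$. In particular the Klain function of $\mu^+$ vanishes identically, and Klain's uniqueness forces $\mu^+\equiv 0$. Therefore $\mu=\mu^-$, which is exactly the assertion $\mu(-K)=-\mu(K)$ for all $K\in\K^n$.

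For part (ii), I would first reduce the hypothesis to the hypothesis of (i) via continuity. Given $K\in\K^n$ with $\dim K=j$, pick any $v\in\R^n\setminus\spa K$; then $K+\varepsilon[0,v]$ has dimension $j+1$ for every $\varepsilon>0$, so by hypothesis $\mu(K+\varepsilon[0,v])=0$. Letting $\varepsilon\to 0^+$ and using the continuity of $\mu$ yields $\mu(K)=0$. Thus $\mu$ vanishes on every $j$-dimensional body and, by (i), is odd. To conclude $\mu\equiv 0$, I would then invoke the analogous injectivity statement for odd valuations from \cite{schneider_schuster06}: an odd element of $\Val_j$ is determined by its values on $(j+1)$-dimensional bodies, either through a Schneider-type density on an appropriate flag manifold, or, equivalently, through the fact that $(j+1)$-dimensional simplices generate the odd part of $\Val_j$ in a suitable sense. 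Since by hypothesis all these values vanish, $\mu^-\equiv 0$ and hence $\mu\equiv 0$.

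The step I expect to be the main obstacle is precisely this last injectivity for the odd part. Klain's clean Grassmannian picture has no literal analogue on $\Val_j^-$: one must instead work on a flag manifold or exploit combinatorics in the polytope algebra to convert "vanishing one dimension higher than the degree" into triviality. Granted that ingredient, the argument condenses into a three-step recipe---even/odd decomposition, Klain for $\mu^+$, and a continuity-plus-(i) reduction for $\mu^-$---with no remaining technical difficulty.
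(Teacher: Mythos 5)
The paper gives no proof of this statement, citing it verbatim from Klain and from Schneider--Schuster, and your reduction is exactly the intended one: part (i) is the injectivity of the Klain map on even valuations applied to $\mu^+$, and part (ii) combines a continuity approximation (vanishing on $(j+1)$-dimensional bodies forces vanishing on $j$-dimensional ones via $K+\varepsilon[0,v]\to K$, hence oddness by (i)) with the odd-injectivity theorem of Schneider and Schuster, which is precisely the content of the second cited reference and therefore legitimate to invoke rather than reprove. Your glue arguments (the even/odd splitting within $\Val_j$, the vanishing of the Klain function of $\mu^+$, the Hausdorff limit) are all correct, so the proposal is sound and matches the route the citations indicate.
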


\subsubsection{Minkowski valuations}
An operator $\di:\K^n\longrightarrow\K^n$ is called a \emph{Minkowski valuation} if \eqref{val_prel} holds for $\di$ and $(\mathcal{A},+)=(\K^n,+)$ with $+$ the Minkowski addition of convex bodies.  

The space of continuous and translation invariant Minkowski valuations is denoted by $\MVal$. By $\MVal_j\subset\MVal$ (resp.~$\MVal_j^s\subset\MVal^s$), we denote the $j$-homogeneous Minkowski valuations (resp.~that are $o$-symmetrizations). 

We will often use the following construction to pass from Minkowski valuations to real-valued valuations: Let $\di$ be a Minkowski valuation and fix $u\in\R^n$. The map $\di_u:\K^n\longrightarrow\R$ defined by
$$\di_u(K)=h(\di(K),u),\quad\forall K\in\K^n,$$
is a real-valued valuation which inherits the properties of $\di$ such as continuity, translation invariance, $j$-homogeneity, and monotonicity.

Let $\di:\K^n\longrightarrow\K^n$ be a continuous and translation invariant Minkowski valuation, i.e., $\di\in\MVal$. Using the support function of $\di(K)$ as just described, the decomposition in Theorem~\ref{mcmullen_dec:teo} yields
\begin{equation}\label{h-decomp1}
h(\di (K),u)=\sum_{j=0}^{n}f_{j}(K,u),\quad u\in\R^n,
\end{equation}
where every $f_{j}(K,u)$ is continuous in both variables and positively homogeneous of bi-degree $(j,1)$, i.e., it satisfies
$$f_{j}(\lambda K, \mu u)=\lambda^{j} \mu f_{j}(K,u),\quad\forall\lambda,\mu>0.$$
Notice that, by the McMullen decomposition, each $f_j$ has the valuation property with respect to $K$, for every fixed $u$. 
Moreover, $K\mapsto f_j(K,u)$ is translation invariant for every $u\in\R^n$. 
Since we will use the above decomposition very often, we will refer to it as the {\it McMullen decomposition} of $\di\in\MVal$ instead of McMullen decomposition of $h(\di(\cdot),u)\in\Val$. We would like to remark that in the literature the term ``McMullen decomposition of a Minkowski valuation'' has been used with a stronger meaning, namely,
a Minkowski valuation $\di$ has a McMullen decomposition if there exist $\di_j\in\MVal_j$, $0\leq j\leq n$, such that 
$$\di=\sum_{j=0}^n\di_j.$$ This turns out to be equivalent to the fact that every function $u\mapsto f_j(K,u)$ in \eqref{h-decomp1} is convex for every $0\leq j\leq n$ (cf.~\cite{dorrek}) which is, in general, not the case. This was first shown in \cite{parapatits.wannerer} (see also \cite{dorrek}).

The following two results give conditions in order that some of the functions $f_j(K,u)$ are support functions.

\begin{lemma}[\cite{schneider_schuster06}]\label{support}Let $\di\in\MVal$. If a convex body $K\in\K^n$ satisfies
$$h(\di(\lambda K),\cdot)=\sum_{j=k}^lf_{j}(\lambda K,\cdot),$$
for $\lambda>0$, with some $k,l\in\{0,\dots,n\}$, $k\leq l$, then $f_{k}(K,\cdot)$ and $f_{l}(K,\cdot)$ are support functions.  
\end{lemma}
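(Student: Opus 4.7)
The plan is to isolate $f_k(K,\cdot)$ and $f_l(K,\cdot)$ as pointwise limits of genuine support functions (of rescalings of $\di(\lambda K)$) and then exploit the fact that a pointwise limit of support functions on $\R^n$, provided it is finite everywhere, is again a support function because positive homogeneity and subadditivity are preserved under pointwise limits.

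First I would fix the body $K$ supplied by the hypothesis and observe that for every $\lambda>0$ the body $\di(\lambda K)$ is convex, so $h(\di(\lambda K),\cdot)$ is a bona fide support function. Multiplying a support function by a positive scalar corresponds to dilating the body, hence for any $\alpha>0$ the map
\[
u\mapsto \alpha\, h(\di(\lambda K),u)=h\!\left(\alpha\,\di(\lambda K),u\right)
\]
is also a support function. Using the bi-homogeneity $f_j(\lambda K,u)=\lambda^j f_j(K,u)$ and the hypothesis,
\[
\lambda^{-k}\,h(\di(\lambda K),u)=f_k(K,u)+\sum_{j=k+1}^{l}\lambda^{j-k}f_j(K,u),
\]
so letting $\lambda\to 0^+$ the right-hand side converges pointwise to $f_k(K,u)$. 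The left-hand side is, for every $\lambda>0$, the support function of the convex body $\lambda^{-k}\di(\lambda K)$.

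To conclude that $f_k(K,\cdot)$ is a support function, I would verify its two defining analytic properties directly. Positive homogeneity of degree $1$ in $u$ is built into the bi-degree $(k,1)$ of $f_k$. Subadditivity follows from the fact that each $\lambda^{-k}h(\di(\lambda K),\cdot)$ is subadditive (as a support function), and subadditivity passes to the pointwise limit. A continuous, positively homogeneous, subadditive function on $\R^n$ is the support function of a unique convex body (Schneider \cite{schneider.book14}, Section 1.7), so $f_k(K,\cdot)$ is a support function. The argument for $f_l(K,\cdot)$ is symmetric: divide by $\lambda^l$ and let $\lambda\to\infty$. Then
\[
\lambda^{-l}\,h(\di(\lambda K),u)=\sum_{j=k}^{l-1}\lambda^{j-l}f_j(K,u)+f_l(K,u)\longrightarrow f_l(K,u),
\]
and the left-hand side is again, for each $\lambda$, the support function of $\lambda^{-l}\di(\lambda K)$; the same limit argument applies.

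The only potential obstacle is purely formal and comes from not confusing objects: one must remember that although $h(\di(K),\cdot)=\sum_j f_j(K,\cdot)$ is a support function, the individual summands $f_j(K,\cdot)$ need \emph{not} be support functions in general (this is the phenomenon underlying the Parapatits--Wannerer examples mentioned in the paper). What makes the conclusion work here is precisely the hypothesis that the decomposition of $h(\di(\lambda K),\cdot)$ has only the contiguous range of degrees $k,\ldots,l$, so that the extremal coefficients $f_k(K,\cdot)$ and $f_l(K,\cdot)$ can be recovered as the limits $\lim_{\lambda\to 0^+}\lambda^{-k}h(\di(\lambda K),\cdot)$ and $\lim_{\lambda\to\infty}\lambda^{-l}h(\di(\lambda K),\cdot)$ of support functions of the rescaled bodies $\lambda^{-k}\di(\lambda K)$ and $\lambda^{-l}\di(\lambda K)$.
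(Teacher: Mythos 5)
Your argument is correct: recovering $f_k(K,\cdot)$ and $f_l(K,\cdot)$ as pointwise limits of the genuine support functions $\lambda^{-k}h(\di(\lambda K),\cdot)$ as $\lambda\to 0^+$ and $\lambda^{-l}h(\di(\lambda K),\cdot)$ as $\lambda\to\infty$, and observing that positive homogeneity and subadditivity pass to the (everywhere finite) pointwise limit, is exactly the standard proof. The paper does not reproduce a proof of this lemma but cites Schneider--Schuster, and your argument is the one used there, so there is nothing to add.
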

By Lemma~\ref{support} the functions $f_0(K,\cdot)$ and $f_n(K,\cdot)$ in the McMullen decomposition~\eqref{h-decomp1} are always support functions. Moreover, since  for every $u\in\R^n$, the function $f_0(\cdot,u)$ is a continuous, translation invariant, and homogeneous of degree $0$ real-valued valuation, it is a multiple of the Euler characteristic and, hence, independent of the convex body $K$; notice, however, that this multiple may depend on $u$.  Analogously, $f_n(K,u)$ is a multiple of the volume of $K$ (see \cite{hadwiger}), which may depend on $u$. In the following, we denote by $L_0$ (resp.~$L_n$) the convex body with support function $f_0(\{0\},\cdot)$ (resp.\! $f_n(\kappa_n^{-1/n}B_n,\cdot)$ 
) and write the McMullen decomposition of $\di$ as
\begin{equation}\label{h-decomp}
h(\di(K),u)=h(L_0,u)+\sum_{j=1}^{n-1}f_j(K,u)+\vol(K)h(L_n,u).
\end{equation}

\begin{remark}\label{image of a point} Let $\di\in\MVal$ and let $p\in\R^n$ be a point. Then 
$
\di(\{p\})=L_0.
$
\end{remark}

Another particular case, where the functions $u\mapsto f_j(K,u)$ are known to be convex, was given in \cite{schuster.parapatits}. Parapatits and Schuster proved there that restricted to zonoids $Z\in\K^n$, each function $u\mapsto f_j(Z,u)$ in the McMullen decomposition for $\di\in\MVal$ 
is a support function. 
 
\begin{theorem}[\cite{schuster.parapatits}]\label{suppZonoid} Let $\di\in\MVal$ and let $Z\in\K^n$ be a zonoid. Then there exist convex bodies $L_0$, $\di_1(Z),\dots,\di_{n-1}(Z), L_n$ such that
\begin{equation}\label{h-decomp convex bodies}
\di(\lambda Z)=L_0+\lambda\di_1(Z)+\dots+\lambda^{n-1}\di_{n-1}(Z)+\lambda^n\vol(Z)L_n,
\end{equation}
for every $\lambda>0$.
\end{theorem}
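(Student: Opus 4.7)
The plan is to reduce first to zonotopes by continuity of $\di$ and Hausdorff-density of zonotopes in the class of zonoids, and then to extract each term of the McMullen decomposition on a zonotope $Z = S_{v_1} + \cdots + S_{v_m}$ as a coefficient of a suitable multi-parameter polynomial. The extremal indices $j = 0$ and $j = n$ already yield support functions of $L_0$ and $\vol(Z)L_n$ by Lemma~\ref{support} and the discussion that follows it, so the core difficulty lies in the intermediate degrees $1 \leq j \leq n-1$.

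For a zonotope $Z$ as above and parameters $(\lambda_1, \dots, \lambda_m) \in [0, \infty)^m$, consider
\[
P(\lambda_1, \dots, \lambda_m; u) := h\bigl(\di(\lambda_1 S_{v_1} + \cdots + \lambda_m S_{v_m}), u\bigr).
\]
Since each $f_j(\cdot, u) \in \Val_j$ vanishes on bodies of dimension less than $j$, a polarization argument paralleling the multilinearity of mixed volumes on sums of segments (cf.\ Theorem~\ref{mix_volumes_Schneider}) shows that each variable $\lambda_i$ appears in $P$ with degree at most one. Hence
\[
P(\lambda_1, \dots, \lambda_m; u) = \sum_{I \subseteq \{1, \dots, m\}} c_I(u)\, \prod_{i \in I} \lambda_i,
\]
and specializing $\lambda_i = \lambda$ for every $i$ gives $f_j(Z, u) = \sum_{|I|=j} c_I(u)$.

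The key step is then to show that each $c_I(u)$ is either identically zero or a support function. Fix $I$ with $|I| = j$ and set $Z_I := \sum_{i \in I} S_{v_i}$. If the directions $\{v_i\}_{i \in I}$ are linearly dependent, then $\dim Z_I < j$; the vanishing of $f_k(\cdot, u) \in \Val_k$ on lower-dimensional bodies forces the polynomial $\lambda \mapsto h(\di(\lambda Z_I), u) = \sum_{J \subseteq I} c_J(u)\, \lambda^{|J|}$ to have degree strictly less than $|I|$, so $c_I(u) = 0$. If the directions are linearly independent, then $\dim Z_I = j$ and the McMullen decomposition of $\di(\lambda Z_I)$ truncates at degree $j$; Lemma~\ref{support} applied with $k = 0$ and $l = j$ identifies $c_I(u)$, the top coefficient of $\lambda$, as a support function. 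Summing over $|I| = j$, the function $f_j(Z, \cdot)$ emerges as a sum of support functions, hence is itself the support function of a convex body $\di_j(Z)$. Continuity of $\di$ and approximation of zonoids by zonotopes then extend the conclusion to arbitrary zonoids.

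The main obstacle is the multilinearity claim for $P$: that the $j$-th polarization of any $\mu \in \Val_j$, evaluated on a $j$-tuple of segments in which some direction is repeated, vanishes. This is the analogue, for general translation-invariant valuations, of the classical fact (Theorem~\ref{mix_volumes_Schneider}) that mixed volumes involving two parallel segments are zero, and it is precisely this property that makes zonoids special, in contrast with the general case, where the intermediate $f_j(K, \cdot)$ need not even be convex in $u$ (cf.\ \cite{parapatits.wannerer}).
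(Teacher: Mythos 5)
Your argument is correct, but note that the paper does not actually prove Theorem~\ref{suppZonoid}: it is imported wholesale from \cite{schuster.parapatits}, so there is no internal proof to compare against. Your reconstruction is sound and follows the route one would expect (and essentially the route of Parapatits--Schuster): the multi-affine expansion of $h\bigl(\di(\lambda_1 S_{v_1}+\dots+\lambda_m S_{v_m}),u\bigr)$, the identification of $c_I(u)$ with $f_{|I|}(Z_I,u)$ by specializing the parameters, the dichotomy according to whether the directions indexed by $I$ are independent, and the passage from zonotopes to zonoids by pointwise limits of support functions. The one step you flag as the ``main obstacle'' --- that each $\lambda_i$ occurs with degree at most one --- is not really an obstacle: it is exactly the content of Theorem~6.3.6 in \cite{schneider.book14}, applied to the real-valued valuation $K\mapsto h(\di(K),u)$, and the paper itself carries out precisely this computation in its proof of Theorem~\ref{lem_polarization} (the mixed valuation $\overline{\di_u}(K[r],\dots)$ is $r$-homogeneous in $K$, hence vanishes on a segment when $r\geq 2$). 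Two minor points you should make explicit if you write this up: the identity $P=\sum_I c_I(u)\prod_{i\in I}\lambda_i$ must be extended from the open orthant to $\lambda_i=0$ by continuity before you may specialize to $Z_I$; and the final limiting step uses that the homogeneous components $f_j(\cdot,u)$ are themselves continuous (recoverable by Lagrange interpolation in $\lambda$ from $h(\di(\lambda K),u)$), so that $f_j(Z,\cdot)$ is a finite pointwise limit of support functions and hence a support function.
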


In view of the previous result we fix the following notation. 
\begin{definition}\label{notation_fi_dii}
Let $\di\in\MVal$, let $u\in\R^n$, and let $Z\in\K^n$ be a zonoid. 
\begin{enumerate}[(i)]
\item For $j\in\{0,1,\dots,n\}$, the function $f_j(\cdot,u):\K^n\longrightarrow\R$ will be called the \emph{$j$-homogeneous function of the McMullen decomposition of $\di$}, in \eqref{h-decomp1}. 
\item For $j\in\{1,2,\dots,n-1\}$, $\di_j(Z)\in\K^n$ will be referred to as the \emph{convex body $\di_j(Z)$ of the McMullen decomposition of $\di$} in \eqref{h-decomp convex bodies}.

\noindent To simplify the notation in this case, we also write $\di_0(Z)$ for $L_0$ and 
$\di_n(Z)$ for $\vol(Z)L_n$.
\end{enumerate}

\end{definition}

For every $1\leq j\leq n-1$, the function $u\mapsto f_j(K,u)$ of the McMullen decomposition of $\di\in\MVal$ defined in \eqref{h-decomp1} inherits many invariance properties of $\di$. 
In particular, we easily deduce the following.

\begin{lemma}\label{even}Let $n\geq 2$, let $j\in\{0,\dots,n\}$, and let $\di\in\MVal^s$. Then the $j$-homogeneous function of the McMullen decomposition of $\di$, $u\mapsto f_j(K,u)$, is even for every $K\in\K^n$.
\end{lemma}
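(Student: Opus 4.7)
The plan is to use the uniqueness of the McMullen decomposition of real-valued valuations (Theorem~\ref{mcmullen_dec:teo}) applied, for each fixed direction $u$, to the real-valued valuation $K\mapsto h(\di(K),u)$.

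First, I would translate the assumption $\di\in\MVal^s$ into a statement about support functions: since $\di(K)=-\di(K)$ for every $K\in\K^n$, and since $h(-L,u)=h(L,-u)$ for every $L\in\K^n$, we obtain
$$
h(\di(K),u)=h(\di(K),-u),\qquad\forall\,K\in\K^n,\;\forall\,u\in\R^n.
$$

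Now fix $u\in\R^n$ and consider the two real-valued maps $\mu_u(K):=h(\di(K),u)$ and $\mu_{-u}(K):=h(\di(K),-u)$. Both are continuous, translation invariant valuations on $\K^n$, hence elements of $\Val$, and by construction their McMullen decompositions are
$$
\mu_u=\sum_{j=0}^n f_j(\cdot,u),\qquad \mu_{-u}=\sum_{j=0}^n f_j(\cdot,-u),
$$
where $f_j(\cdot,u)\in\Val_j$ and $f_j(\cdot,-u)\in\Val_j$ by the bi-homogeneity recalled after \eqref{h-decomp1}. By the first step, $\mu_u\equiv\mu_{-u}$ as elements of $\Val$. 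The uniqueness of the McMullen decomposition (Theorem~\ref{mcmullen_dec:teo}) then forces $f_j(K,u)=f_j(K,-u)$ for every $j\in\{0,\dots,n\}$ and every $K\in\K^n$, which is exactly the evenness of $u\mapsto f_j(K,u)$.

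No real obstacle is expected: the only delicate point is to notice that evenness cannot be concluded merely from the identity $\sum_j f_j(K,u)=\sum_j f_j(K,-u)$ and the 1-homogeneity of $f_j(K,\cdot)$ in $u$; the key is that the identity must be read as an equality of valuations in $K$, so that the uniqueness of McMullen's decomposition can be invoked degree by degree.
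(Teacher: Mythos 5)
Your proof is correct and follows essentially the same route as the paper: both start from $h(\di(K),u)=h(\di(K),-u)$ and then separate the homogeneous components. The only cosmetic difference is that you invoke the uniqueness statement of Theorem~\ref{mcmullen_dec:teo} abstractly, whereas the paper re-derives it on the spot by evaluating at $\lambda K$ and comparing coefficients of the resulting polynomial in $\lambda$ — the same argument in expanded form.
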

\begin{proof}
Let $K\in\K^n$ and let $\di\in\MVal^s$. Since $\di(K)\in\K^n_s$, for every $K\in\K^n$, we have $h(\di(K),u)=h(\di(K),-u)$ for every $u\in\R^n$. 
For $\lambda>0$, the McMullen decomposition for $\di$ in \eqref{h-decomp1} with $\lambda K$ instead of $K$ and once with $-u$ instead of $u$ yields
$$
f_{0}(K,u)+\sum_{j=1}^n\lambda^{j}f_j(K,u)=f_{0}(K,-u)+\sum_{j=1}^n\lambda^{j}f_j(K,-u).
$$
By comparing the coefficients of the above polynomial expression in $\lambda$ we get $f_{j}(K,u)=f_{j}(K,-u)$ for every $0\leq j\leq n$.
\end{proof}

In the next lemma we collect some facts about the functions involved in the McMullen decomposition of $\di\in\MVal$ which will be used throughout the rest of the work. 

\begin{lemma}\label{r: facts on f_js}
Let $\di\in\MVal$, $K\in\K^n$, $u\in\R^n$, and $j\in\{0,1,\dots,n\}$. Then:
\begin{enumerate}
\item[\emph{(i)}] the function $u\mapsto f_j(K,u)$ is a difference of support functions of convex bodies;
\item[\emph{(ii)}] the function $K\mapsto f_j(K,u)$ is a valuation homogeneous of degree $j$; 
\item[\emph{(iii)}] if $\dim K\leq j-1$, then $f_j(K,u)=0$;
\item[\emph{(iv)}] if $\dim K=j$, then $u\mapsto f_j(K,u)$ is a support function;
\item[\emph{(v)}] if $f_j(K,u)=0$ for every $K\in\K^n$ with $\dim K=j+1$, then $f_j(\cdot,u)\equiv 0$.
\item[\emph{(vi)}] if $j_0\in\{0,\dots,n-1\}$ and $f_j(K,\cdot)$ is linear for every $j>j_0$, then $f_{j_0}(K,\cdot)$ is a support function. 
\end{enumerate}
\end{lemma}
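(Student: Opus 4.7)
The plan is to treat (i)--(v) as consequences of already available tools and to spend most of the effort on (vi).

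Parts (i) and (ii) drop out of Theorem~\ref{mcmullen_dec:teo}. Fixing $u\in\R^n$, the map $K\mapsto h(\di(K),u)$ belongs to $\Val$, and uniqueness of the McMullen decomposition forces each $f_j(\cdot,u)$ into $\Val_j$, which gives (ii). For (i), I would pick $n+1$ distinct positive reals $\lambda_0,\dots,\lambda_n$; the Vandermonde system allows one to write $f_j(K,u)=\sum_{i=0}^n \alpha_{i,j}\,h(\di(\lambda_i K),u)$ with fixed real coefficients $\alpha_{i,j}$. Splitting the coefficients according to sign and using that a Minkowski combination of convex bodies is convex, this real-linear combination of support functions is exhibited as a difference of two support functions.

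For (iii), assume $\dim K = k\leq j-1$. After a translation (which does not change $f_j$) the affine hull of $K$ is a $k$-dimensional linear subspace $E$, and the restriction $f_j(\cdot,u)\big|_{\K(E)}$ is a continuous, translation invariant, $j$-homogeneous real-valued valuation on $\K(E)\cong\K^k$. Theorem~\ref{mcmullen_dec:teo} applied in dimension $k$ leaves no room for a nontrivial homogeneous component of degree $j>k$, so $f_j(K,u)=0$. Part (iv) now follows by combining (iii) with Lemma~\ref{support}: if $\dim K=j$, then (iii) kills $f_m(K,\cdot)$ for every $m>j$, so \eqref{h-decomp1} collapses to $h(\di(\lambda K),\cdot)=\sum_{m=0}^{j}f_m(\lambda K,\cdot)$, and Lemma~\ref{support} with $k=0$, $l=j$ makes $f_j(K,\cdot)$ a support function. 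For (v), (ii) places $f_j(\cdot,u)$ in $\Val_j$, and Theorem~\ref{th_lemma}(ii) applies directly.

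The substantive step is (vi). Since a linear function on $\R^n$ is the support function of a singleton, we may write $f_j(K,u)=\langle v_j,u\rangle$ for every $j>j_0$ and suitable $v_j\in\R^n$. Set $y(\lambda):=\sum_{j>j_0}\lambda^j v_j$; then \eqref{h-decomp1} together with the effect of translation on support functions gives
\[
h\bigl(\di(\lambda K)-y(\lambda),u\bigr)\;=\;\sum_{j=0}^{j_0}\lambda^j f_j(K,u)\qquad\text{for every }\lambda>0,\ u\in\R^n.
\]
The left-hand side is the support function of a convex body, so the polynomial $u\mapsto\sum_{j=0}^{j_0}\lambda^j f_j(K,u)$ is a support function for each $\lambda>0$. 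Dividing by the positive scalar $\lambda^{j_0}$ preserves that property, and letting $\lambda\to\infty$ we recover $f_{j_0}(K,u)$ as the pointwise limit of the support functions $u\mapsto\sum_{j=0}^{j_0}\lambda^{j-j_0}f_j(K,u)$. The main obstacle is now a small but essential functional step: a pointwise limit of sublinear functions remains sublinear, and, combined with the continuity of $f_{j_0}(K,\cdot)$ inherited from the McMullen decomposition, this upgrades $f_{j_0}(K,\cdot)$ to the support function of a convex body, completing the proof.
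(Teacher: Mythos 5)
Your proof is correct and follows essentially the same route as the paper: items (ii)--(v) are handled by the same combinations of tools (uniqueness in Theorem~\ref{mcmullen_dec:teo} and the vanishing of high-degree homogeneous valuations on low-dimensional bodies for (ii)--(iii), Lemma~\ref{support} for (iv), Theorem~\ref{th_lemma}(ii) for (v)), and your item (vi) is the paper's argument in different clothing --- the paper bounds the subadditivity defect $h(\di(\lambda K),u+v)-h(\di(\lambda K),u)-h(\di(\lambda K),v)\leq 0$, observes that the terms of degree $>j_0$ cancel by linearity, and lets $\lambda\to\infty$ to force the $\lambda^{j_0}$-coefficient to be $\leq 0$, which is exactly the limit you take after translating away $y(\lambda)$ and normalizing by $\lambda^{j_0}$. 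The only point where you genuinely add content is (i), where the paper simply cites Parapatits--Schuster while you supply the standard Vandermonde-interpolation proof; that argument is valid.
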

\begin{proof}
The statement of item (i) was proved in \cite{schuster.parapatits}. Item (ii) follows directly from the McMullen decomposition of $\di$. Item (iii) follows, for instance, from Corollary~6.3.2 in \cite{schneider.book14}. 
Item (iv) is deduced from Lemma~\ref{support} and items (ii) and (iii). Item (v) follows from Theorem~\ref{th_lemma}(ii).

For item (vi), we first note that since $\di(K)$ is a convex body for every $K\in\K^n$, we have
$0\geq h(\di(\lambda K),u+v)-h(\di(\lambda K),u)-h(\di(\lambda K),v)$ for every $u,v\in\R^n$ and $\lambda>0$. By the McMullen decomposition of $\di$ and the linearity of $f_j(K,\cdot)$ for $j>j_0$, we have 
\begin{align*}
0&\geq \lambda^n(f_n(K,u+v)-f_n(K,u)-f_n(K,v))+\dots\\
&\quad\dots+\lambda^{j_0+1}(f_{j_0+1}(K,u+v)-f_{j_0+1}(K,u)-f_{j_0+1}(K,v))+
\\&\quad+\lambda^{j_0}(f_{j_0}(K,u+v)-f_{j_0}(K,u)-f_{j_0}(K,v))
+O(\lambda^{j_0-1})
\\&=\lambda^{j_0}(f_{j_0}(K,u+v)-f_{j_0}(K,u)-f_{j_0}(K,v))
+O(\lambda^{j_0-1}).
\end{align*}

If $j_0\geq 1$, then as $\lambda\to\infty$, we get that the inequality can be satisfied only if 
$$f_{j_0}(K,u+v)-f_{j_0}(K,u)-f_{j_0}(K,v)\leq 0,$$ that is, 
$f_{j_0}(K,\cdot)$ is a support function for every $K\in\K^n$. If $j_0=0$, we obtain the latter directly.
\end{proof}

\medskip
To finish this section, we state the following technical result, which can be obtained from Theorem~6.3.6 in \cite{schneider.book14}. For completeness, we give a proof of the result, which will be essential in Section~\ref{sec: dependence a point}.

Let $C(n,k)$ denote the set of all ordered subsets of $k$ elements among $\{1,\dots,n\}$ and let $\sigma_j$ be the $j$-th element of $\sigma$, $1\leq j\leq k$.
 
\begin{theorem}[Corollary of Theorem 6.3.6 in \cite{schneider.book14}]
\label{lem_polarization}
Let $1\leq k\leq n$, let $\di\in\MVal_k$, and let $S_1,\dots,S_n$ be segments in $\R^n$. Then
$$\di(S_1+\dots+S_n)=\sum_{\sigma\in C(n,k)}\di(S_{\sigma_1}+\dots+S_{\sigma_k}).$$ 
\end{theorem}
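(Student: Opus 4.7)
The plan is to fix $u\in\R^n$, pass to the scalar valuation $\mu_u:=h(\di(\cdot),u)\in\Val_k$, and exploit McMullen's polynomial expansion. Since the support function is additive under Minkowski addition (see~\eqref{add_sup}), it suffices to prove, for every $u$,
\begin{equation*}
\mu_u(S_1+\dots+S_n)=\sum_{\sigma\in C(n,k)}\mu_u(S_{\sigma_1}+\dots+S_{\sigma_k}).
\end{equation*}
Theorem~6.3.6 of \cite{schneider.book14} yields, for $\lambda_1,\dots,\lambda_n\ge 0$,
\begin{equation*}
\mu_u\Bigl(\sum_{i=1}^n\lambda_i S_i\Bigr)=\sum_{r_1+\dots+r_n=k}\binom{k}{r_1,\dots,r_n}\bar\mu_u(S_1[r_1],\dots,S_n[r_n])\,\lambda_1^{r_1}\cdots\lambda_n^{r_n},
\end{equation*}
where $\bar\mu_u$ denotes the symmetric $k$-linear polarization of $\mu_u$.

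The crucial step is to show that $\bar\mu_u(S_1[r_1],\dots,S_n[r_n])=0$ whenever some $r_i\ge 2$; by symmetry in the arguments, we argue for $i=1$. Translation invariance lets us take $S_1=[0,v]$, and for every convex body $L$ and every integer $m\ge 1$ one has
\begin{equation*}
m S_1+L=([0,v]+L)\cup([v,mv]+L),\qquad ([0,v]+L)\cap([v,mv]+L)=\{v\}+L,
\end{equation*}
the second identity being a routine fibrewise check over $v^\perp$. The valuation property of $\mu_u$, together with translation invariance, yields the recursion $\mu_u(mS_1+L)=\mu_u(S_1+L)+\mu_u((m-1)S_1+L)-\mu_u(L)$, whose solution is $\mu_u(mS_1+L)=\mu_u(L)+m\bigl(\mu_u(S_1+L)-\mu_u(L)\bigr)$. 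Since $\lambda_1\mapsto\mu_u(\lambda_1 S_1+L)$ is polynomial on $[0,\infty)$ and agrees with this affine function at every positive integer, it is affine in $\lambda_1$. Specializing $L=\lambda_2 S_2+\dots+\lambda_n S_n$, the coefficient of $\lambda_1^{r_1}$ in the expansion above must vanish for every $r_1\ge 2$ and every $(\lambda_2,\dots,\lambda_n)$ in the positive orthant, and comparing coefficients in $\lambda_2,\dots,\lambda_n$ forces $\bar\mu_u(S_1[r_1],\dots,S_n[r_n])=0$; the symmetric role of the other indices then handles $r_i\ge 2$ for any $i$.

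Consequently, only the monomials $\prod_{j\in\sigma}\lambda_j$ with $\sigma\in C(n,k)$ remain in the expansion, each contributing $k!\,\bar\mu_u(S_{\sigma_1},\dots,S_{\sigma_k})$. Setting $\lambda_i=1$ gives
\begin{equation*}
\mu_u(S_1+\dots+S_n)=\sum_{\sigma\in C(n,k)}k!\,\bar\mu_u(S_{\sigma_1},\dots,S_{\sigma_k}),
\end{equation*}
and the same vanishing applied inside each sub-sum produces $\mu_u(S_{\sigma_1}+\dots+S_{\sigma_k})=k!\,\bar\mu_u(S_{\sigma_1},\dots,S_{\sigma_k})$. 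This establishes the scalar identity, and since it holds for every $u\in\R^n$, the support functions of $\di(S_1+\dots+S_n)$ and $\sum_{\sigma}\di(S_{\sigma_1}+\dots+S_{\sigma_k})$ agree, proving the theorem. The main obstacle is the affineness of $\mu_u$ in each segment variable; once the recursion $\mu_u(mS+L)=m\mu_u(S+L)-(m-1)\mu_u(L)$ is in place, the rest is bookkeeping on McMullen's polynomial.
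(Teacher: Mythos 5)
Your proof is correct, and it follows the same skeleton as the paper's: fix $u\in\R^n$, pass to the scalar valuation $h(\di(\cdot),u)$, invoke the polynomial expansion of Theorem~6.3.6 in Schneider's book, show that every polarization term with some multiplicity $r_i\geq 2$ vanishes, and conclude by uniqueness of support functions. Where you diverge is in the justification of that key vanishing step. The paper observes that $K\mapsto\overline{\di_u}(K[r_1],S_2[r_2],\dots,S_n[r_n])$ is a continuous, translation invariant valuation homogeneous of degree $r_1$ and cites the standard fact (Corollary~6.3.2 in Schneider's book) that such a valuation vanishes on bodies of dimension less than $r_1$; since a segment has dimension at most $1$, all terms with $r_1\geq 2$ die. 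You instead prove directly that $\lambda_1\mapsto\mu_u(\lambda_1 S_1+L)$ is affine, via the decomposition $mS_1+L=([0,v]+L)\cup([v,mv]+L)$ with intersection $\{v\}+L$ (which is indeed valid --- it is the general identity $(A+L)\cap(B+L)=(A\cap B)+L$ for convex $A,B$ with $A\cup B$ convex), the resulting recursion, and polynomiality in $\lambda_1$. This is a more elementary and self-contained route to the same conclusion: it trades the citation of a dimension-vanishing theorem for a short valuation-theoretic computation, at the cost of a slightly longer argument and the need to verify the intersection identity. Both are sound; your bookkeeping with the factor $k!$ and the reduction of each sub-sum $\mu_u(S_{\sigma_1}+\dots+S_{\sigma_k})$ to $k!\,\overline{\mu_u}(S_{\sigma_1},\dots,S_{\sigma_k})$ is consistent and in fact slightly more careful than the paper's final display.
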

\begin{proof}
Let $\di\in\MVal_k$ and let $S_1,\dots,S_n$ be segments in $\R^n$. 
Consider $u\in\R^n$ and define the continuous and translation invariant real-valued valuation $\Phi_u(K):=h(\Phi(K),u)$. From Theorem 6.3.6 in \cite{schneider.book14}, we have that there exists a continuous and translation invariant operator $\overline{\di_u}:(\K^n)^k\longrightarrow\R$ that is Minkowski additive in each variable and such that 
\begin{equation}\label{636_schneider_u}
\di_u(S_1+\dots+S_n)=\sum_{r_1,\dots,r_n=0}^{k}\binom{k}{r_1\dots r_n}\overline{\di_u}(S_1[r_1],\dots,S_n[r_n]),
\end{equation}
with $\sum_{j=1}^nr_j=k$. 

Moreover, by Theorem 6.3.6 in \cite{schneider.book14}, the mapping $K\mapsto{\overline{\di_u}}(K[r],M_{r+1},\dots,M_k)$ is a continuous and translation invariant valuation, homogeneous of degree $r$ for each fixed $r\in\{1,\dots,k\}$ and for every fixed tuple of convex bodies $M_{r+1},\dots,M_k$. In particular, for every $r_1,\dots,r_n$ with $r_1+\dots+r_n=k$, we have that 
$$K\mapsto{\overline{\di_u}}(K[r_1],S_2[r_2],\dots,S_n[r_n])$$
is a continuous and translation invariant real-valued valuation, homogeneous of degree $r_1$. Hence, if $\dim K<r_1$, then $\overline{\di_u}(K[r_1],S_2[r_2],\dots,S_n[r_n])=0$ (see \cite[Corollary 6.3.2]{schneider.book14}). Since in \eqref{636_schneider_u} we are taking $K=S_1$, a segment, if $r_1\geq 2$, the summand vanishes. Since the same argument can be done for $r_2,\dots,r_n$, we obtain that $r_i\in\{0,1\}$ for every $1\leq i\leq n$ and the sum in \eqref{636_schneider_u} can be taken over $C(n,k)$. Hence,  
\begin{align*}
\di_u(S_1+\dots+S_n)&=\sum_{r_1,\dots,r_n=0}^{k}\binom{k}{r_1\dots r_n}\overline{\di_u}(S_1[r_1],\dots,S_n[r_n])
\\&=\sum_{\sigma\in C(n,k)}\overline{\di_u}(S_{\sigma_1},\dots,S_{\sigma_k})
=\sum_{\sigma\in C(n,k)}\di_u(S_{\sigma_1}+\dots+S_{\sigma_k}).
\end{align*}
The last equality holds by applying the same argument as before but with $\di_u(S_{\sigma_1}+\dots+S_{\sigma_k})$ instead of $\di_u(S_1+\dots+S_n)$.
Since $\di_u(K)=h(\di(K), u)$, by using \eqref{add_sup}, we have proven that 
\[
h(\Phi(S_1+\dots+S_n),u)=h(\sum_{\sigma\in C(n,k)}\Phi(S_{\sigma_1}+\dots+S_{\sigma_k}),u),
\]
for every $u\in\R^n$. 
Since the support function uniquely describes a convex body (\cite[Theorem 1.7.1]{schneider.book14}), the statement of the theorem follows.  
\end{proof}

\subsection{Volume constraints}\label{sec:volume_constraints}
As described in the introduction, the main objective of this paper is to describe Minkowski valuations satisfying certain volume constraints. 
\begin{definition} An operator $\di:\K^n\longrightarrow\K^n$ satisfies a lower volume constraint (LVC) if there exists a constant $c_\di>0$ such that
$$\vol(\di(K))\ge c_\di\vol(K),\quad\forall K\in\K^n.$$
Analogously, we say that $\di$ satisfies an upper volume constraint (UVC) if there exists $C_\di>0$ such that
$$\vol(\di(K))\leq C_{\di}\vol(K),\quad\forall K\in\K^n.$$
\end{definition}

Throughout the paper we will refer to these properties simply writing (LVC) and (UVC), respectively. We will mostly consider
valuations that satisfy both (LVC) and (UVC), which corresponds to Definition \ref{def_VC_intro}. If $\di$ is of this type, we will say that $\di$ satisfies the {\em volume constraint}, briefly, $\di$ satisfies (VC) or $\di$ satisfies the (VC) condition. 

The identity operator on $\K^n$ trivially satisfies (VC), but a more interesting example, which  motivated the previous definition in \cite{abardia.colesanti.saorin1}, 
is the difference body operator, defined in \eqref{diff_body}, which satisfies (RS). 

The operators in Theorem~\ref{teo}(ii) are also examples of Minkowski valuations satisfying (VC). Indeed, for a segment $S$ and an $(n-1)$-dimensional convex body $L$ with $\dim(L+S)=n$ we have, by the linearity and positivity of mixed volumes (see Theorem~\ref{mix_volumes_Schneider}),
$$\vol(L+\vol(K)S)=V(L[n-1],\vol(K)S)=\vol(K)V(L[n-1],S)=\vol(K)\vol(L+S).$$
Hence, the (VC) condition is satisfied with $c_{\di}=C_{\di}=\vol(L+S)\neq 0$.

\section{Dichotomy for the image of a point}\label{sec: dependence a point}

The aim of this section is to prove that, if a Minkowski valuation $\di\in\MVal$ satisfies the (VC) condition, then the image of a 
point is either a point or an $(n-1)$-dimensional convex body. That is, we prove the following result.

\begin{theorem}\label{L0Ln} Let $p\in\R^n$. 
If $n\geq 2$ and $\di\in\MVal$ satisfies the (VC) condition, then either $\dim(\di(\{p\}))=0$ or $\dim(\di(\{p\}))=n-1$.
\end{theorem}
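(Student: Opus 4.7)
The plan is to identify $L_0 = \di(\{p\})$ using Remark \ref{image of a point}, and show $\dim L_0 \in \{0, n-1\}$ by excluding both $\dim L_0 = n$ and $1 \leq \dim L_0 \leq n-2$. The case $\dim L_0 = n$ is ruled out by continuity: letting $K_\varepsilon := \{p\} + \varepsilon B^n$, translation invariance and continuity of $\di$ give $\di(K_\varepsilon) \to L_0$ in the Hausdorff metric, whence $\vol(\di(K_\varepsilon)) \to \vol(L_0) > 0$, contradicting (UVC), which forces $\vol(\di(K_\varepsilon)) \leq C_\di \varepsilon^n \kappa_n \to 0$.

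For the intermediate case $k_0 := \dim L_0 \in \{1, \ldots, n-2\}$, I would fix an $n$-dimensional zonoid $Z$ and use Theorem \ref{suppZonoid} to expand
$$
\di(\lambda Z) = L_0 + \lambda \di_1(Z) + \cdots + \lambda^n \vol(Z)\, L_n
$$
as a Minkowski sum. The polynomial $P(\lambda) := \vol(\di(\lambda Z))$ then has a mixed-volume expansion with non-negative coefficients, and the upper bound $P(\lambda) \leq C_\di \lambda^n \vol(Z)$ from (UVC) combined with this non-negativity forces every coefficient of $\lambda^j$ with $j \neq n$ to vanish---otherwise $P(\lambda)/\lambda^n$ would blow up as $\lambda \to 0^+$ or $\lambda \to \infty$. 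By (LVC), the surviving coefficient satisfies $a_n \geq c_\di \vol(Z) > 0$. Because $\dim L_0 = k_0$, any multi-index with $m_0 > k_0$ yields a vanishing mixed volume; among indices $(m_0, \ldots, m_n)$ with $\sum m_i = n$ and $\sum i\, m_i = n-k_0$, the constraint $m_0 \leq k_0$ leaves only $(k_0, n-k_0, 0, \ldots, 0)$. Hence the vanishing of the coefficient of $\lambda^{n-k_0}$ collapses to $V(L_0[k_0], \di_1(Z)[n-k_0]) = 0$, and Theorem \ref{mix_volumes_Schneider} gives $\dim(L_0 + \di_1(Z)) \leq n-1$ for every $n$-dimensional zonoid $Z$.

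To finish, I would use Theorem \ref{lem_polarization} applied to the valuation $K \mapsto f_1(K, u) \in \Val_1$---which makes $\di_1$ Minkowski-additive on zonoids---to exhibit a fixed proper subspace $\tilde V \supseteq \spa L_0$ containing $\spa \di_1(Z)$ for every zonoid $Z$, and then propagate this flatness to each higher-order piece $\di_j(Z)$, $j \geq 2$. The propagation would come from iterating the previous analysis at the vanishing coefficients $a_{n-k_0+1}, a_{n-k_0+2}, \ldots$: each is a sum of non-negative mixed volumes of the form $V(L_0[m_0], \di_1(Z)[m_1], \di_{j_1}(Z), \ldots)$ that must individually vanish, and Theorem \ref{mix_volumes_Schneider} would in each case convert the vanishing into a dimensional inclusion forcing $\spa \di_j(Z) \subseteq \tilde V$. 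Summing, $\spa \di(Z) \subseteq \tilde V \subsetneq \R^n$, so $\vol(\di(Z)) = 0$, contradicting $a_n > 0$. The main obstacle will be this last iterative step: several distinct multi-indices contribute to each $a_{n-k_0+r}$, and the associated dimensional inclusions must be carefully disentangled and combined to force each $\di_j(Z)$ inside the single hyperplane $\tilde V$.
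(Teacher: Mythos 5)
Your preliminary reductions are sound and coincide with the paper's: ruling out $\dim(\di(\{p\}))=n$ via (UVC), the observation that $\vol(\di(\lambda Z))$ must reduce to the single monomial in $\lambda^n$, and the fact that on zonoids each coefficient is a sum of non-negative mixed volumes, so every summand must vanish individually. Your identification of $(k_0,n-k_0,0,\dots,0)$ as the unique multi-index with $m_0\le k_0$ contributing to the coefficient of $\lambda^{n-k_0}$ is correct and is a nice observation. There are two gaps, one small and one fatal. The small one: $V(L_0[k_0],\di_1(Z)[n-k_0])=0$ does not give $\dim(L_0+\di_1(Z))\le n-1$. By Theorem~\ref{mix_volumes_Schneider} it gives only the disjunction ``$\dim\di_1(Z)<n-k_0$ \emph{or} $\dim(L_0+\di_1(Z))<n$'', and in the configurations you must exclude it is typically the first alternative that holds (with $\di_1(Z)$ a point), so you learn nothing about a hyperplane containing $L_0+\di_1(Z)$.

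The fatal gap is the propagation step, which cannot work as described: the vanishing coefficients of $P(\lambda)$ for a \emph{single} zonoid $Z$ do not contain the information needed to force $\spa\di_j(Z)\subseteq\tilde V$ for $j\ge 2$. Concretely, take $n=4$, $k_0=\dim L_0=2$, and the candidate configuration $\dim\di_1(Z)=\dim\di_3(Z)=\dim L_n=0$, $\dim\di_2(Z)=2$, with $L_0+\di_2(Z)$ spanning $\R^4$ (this satisfies $\sum_k a_k=\sum_k ka_k=n$). Every mixed volume occurring in a coefficient of $\lambda^j$ with $j\ne 4$ then vanishes for a trivial reason --- it contains $\di_1$, $\di_3$ or $L_n$ with positive multiplicity, or $L_0$ or $\di_2(Z)$ with multiplicity greater than $2$ --- and so Theorem~\ref{mix_volumes_Schneider} yields no containment in a fixed hyperplane. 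The only term that could detect whether $\di_2(Z)$ leaves $\spa L_0$ is $V(L_0[2],\di_2(Z)[2])$, and that is precisely the coefficient of $\lambda^n$, which is allowed (indeed required by (LVC)) to be positive. Hence the single-zonoid coefficient analysis is fully consistent with the intermediate dimension and produces no contradiction. This is exactly why the paper's proof must vary the zonotope: it takes $C_n=S_1+\dots+S_n$, uses the polarization identity $\di_k(C_n)=\sum_{\sigma}\di_k(S_{\sigma_1}+\dots+S_{\sigma_k})$ from Theorem~\ref{lem_polarization} to localize each homogeneous piece on sub-sums of segments, and then (Claims 2--4 of the paper's proof) constructs degenerate zonotopes $P$ with $\dim P<n$ but $\vol(\di(P))>0$, contradicting (UVC), together with an exchange argument replacing $S_1$ by a new segment $S$. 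Some version of this variation over zonotopes is indispensable; a fixed subspace $\tilde V$ extracted from one volume polynomial will not suffice.
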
  

For the proof of Theorem~\ref{L0Ln}, we need to exploit in a more specific way the information given by the McMullen decomposition \eqref{h-decomp} of $\di$, which we
can use since $\di$ is a continuous and  translation invariant Minkowski valuation.

We consider $\vol(\di(\lambda K))$ for $\lambda>0$. By \eqref{h-decomp}, Lemma~\ref{r: facts on f_js}(i), and the extension of mixed volumes to differences of support 
functions (see Section~\ref{sec_mixed_volumes}), we have 
$$
\vol(\di (\lambda K))\!=\!V_n(h(\di (\lambda K),\cdot)[n])\!=\!V_n\Big(\!\big(h(L_0,\cdot)+\sum_{j=1}^{n-1}\lambda^j f_j(K,\cdot)+\lambda^n\vol(K)h(L_n,\cdot)\big)[n]\Big).
$$
The multilinearity of the extension of mixed volumes to differences of support functions provides us with a polynomial expansion of $\vol(\di(\lambda K))$ in $\lambda$, which may contain terms of degree from 
$0$ until $n^n$.
Moreover, each of the coefficients of the polynomial is a sum of mixed volumes of the support functions of $L_0$ and $L_n$, and the 
functions $f_j(K,\cdot)$, $1\leq j\leq n-1$, involved in the McMullen decomposition of $\di$.
As each of these functions depends only on $K$, for the sake of brevity, we will write 
\begin{equation}\label{vi di}
\vol(\di(\lambda K))=\sum_{j=0}^{n^n} v^{\di}_j(K) \lambda^j,
\end{equation}
and denote by $v_j^{\di}(K)$ the coefficient of degree $j$ in the above polynomial expansion of $\vol(\di(\lambda K))$, $0\leq j \leq n^n$. 
We note that, in general, $v_j^{\di}(K)$ may be negative. 

\begin{remark}\label{f_i in v_j}
Let $\di\in\MVal$ satisfy (VC) and let $f_i$, $0\leq i\leq n$, be the functions appearing in its McMullen decomposition. Let $K\in\K^n$ and let $v_j^{\di}(K)$ be as above. Then, for every $0\leq j \leq n^n$, the mixed volumes involved in the coefficient $v_j^{\di}(K)$ contain only $f_i(K,\cdot)$ with $0\leq i\leq j$.
\end{remark}

We next state a fact whose proof is a simple observation, but which will play an important role in the next.

If $\di\in\MVal$ satisfies the (VC) condition, then there exist positive constants $c_\di$ and $C_\di$, 
independent of $K$ and $\lambda$, for which
$$
c_\di\lambda^n\vol(K)=c_\di\vol(\lambda K)\leq\vol(\di(\lambda K))\leq C_\di\vol(\lambda K)=C_\di\lambda^n\vol(K). 
$$
Comparing these inequalities with \eqref{vi di}, we immediately get that the only possibly non vanishing term in the sum in~\eqref{vi di} is the one containing $\lambda^n$. In other words,   
$\vol(\di(\lambda K))$ is necessarily a monomial of degree $n$. 
The following corollaries collect the important consequences of this fact. 

\begin{corollary}\label{mixedZero}
Let $\di\in \MVal$ satisfy the (VC) condition. 
Then: 
\begin{enumerate}
\item[\emph{(i)}] if $\dim K<n$, then $v^{\di}_l(K)=0$ for all $0\leq l \leq n^n$;
\item[\emph{(ii)}] if $\dim K=n$, then $v_n^{\di}(K)\neq 0$;
\item[\emph{(iii)}] if $\dim K=n$, then $v^{\di}_l(K)=0$ for every $l\neq n$.
\end{enumerate}
\end{corollary}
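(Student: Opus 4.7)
My plan rests on the observation already highlighted in the excerpt: $\lambda\mapsto V_n(\di(\lambda K))$ is, by \eqref{vi di}, a polynomial with real coefficients $v_j^\di(K)$ of degree at most $n^n$, while the (VC) condition sandwiches this polynomial between two multiples of the monomial $\lambda^n V_n(K)$. All three claims follow from this sandwich by elementary polynomial analysis.

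For (i), if $\dim K<n$ then $V_n(\lambda K)=0$ for every $\lambda>0$, and (VC) forces
\[
0\;=\;c_\di\,V_n(\lambda K)\;\le\;V_n(\di(\lambda K))\;\le\;C_\di\,V_n(\lambda K)\;=\;0
\]
for every $\lambda>0$. Hence the polynomial $\sum_{j=0}^{n^n}v_j^\di(K)\lambda^j$ vanishes on $(0,\infty)$, which forces all its coefficients to be zero.

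For (iii), suppose $\dim K=n$, so $V_n(K)>0$. Set $P(\lambda):=V_n(\di(\lambda K))=\sum_{j=0}^{n^n}v_j^\di(K)\lambda^j$ and write (VC) in the form
\[
c_\di\,V_n(K)\;\le\;\frac{P(\lambda)}{\lambda^n}\;\le\;C_\di\,V_n(K)\qquad\text{for all }\lambda>0.
\]
Letting $\lambda\to\infty$ rules out any non-zero coefficient $v_j^\di(K)$ with $j>n$: otherwise, denoting by $k$ the largest such index, the quotient behaves like $v_k^\di(K)\,\lambda^{k-n}$ with $k-n\ge 1$, so it tends to $\pm\infty$, contradicting the upper bound if the sign is positive and the (strictly positive) lower bound if the sign is negative. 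Symmetrically, letting $\lambda\to 0^+$ rules out any non-zero coefficient $v_j^\di(K)$ with $j<n$: denoting by $k$ the smallest such index, the quotient behaves like $v_k^\di(K)\,\lambda^{k-n}$ with $k-n\le -1$, and again blows up to $\pm\infty$, which is incompatible with being trapped between two positive constants. Hence only $v_n^\di(K)$ can be non-zero, proving (iii).

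For (ii), combining (iii) with the sandwich evaluated at $\lambda=1$ gives $v_n^\di(K)=P(1)\ge c_\di V_n(K)>0$, so in particular $v_n^\di(K)\neq 0$. There is really no serious obstacle here; the only point that deserves attention is to treat both asymptotic regimes $\lambda\to 0^+$ and $\lambda\to\infty$ so as to eliminate both the low-degree and the high-degree terms, and to note that the lower bound in (VC) is used crucially (through its positivity) to exclude cancellations of sign when $\lambda\to 0^+$.
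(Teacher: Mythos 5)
Your proposal is correct and is essentially the paper's own argument: the paper derives the corollary as an immediate consequence of comparing the sandwich $c_\di\lambda^n\vol(K)\le\vol(\di(\lambda K))\le C_\di\lambda^n\vol(K)$ with the polynomial expansion \eqref{vi di}, concluding that $\vol(\di(\lambda K))$ must be a monomial of degree $n$. You merely spell out the elementary polynomial analysis (vanishing on $(0,\infty)$ for part (i), the two asymptotic regimes $\lambda\to 0^+$ and $\lambda\to\infty$ for part (iii), and evaluation at $\lambda=1$ for part (ii)) that the paper leaves implicit.
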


\begin{corollary}\label{each_zero}
Let $\di\in \MVal$ satisfy the (VC) condition and let $K\in\K^n$ be fixed.
If for every $1\leq j\leq n-1$, the functions $u\mapsto f_j(K,u)$ are convex,
then a coefficient $v^{\di}_j(K)$ in \eqref{vi di} vanishes if and only if
each of the mixed volumes involved in its explicit expression does.
\end{corollary}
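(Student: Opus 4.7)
The plan is to expand $\vol(\di(\lambda K))$ via multilinearity of the mixed volume and then collect terms according to the exponent of $\lambda$. Starting from the McMullen decomposition \eqref{h-decomp}, the extension of mixed volumes to differences of support functions (Section~\ref{sec_mixed_volumes}) writes
$$
\vol(\di(\lambda K)) = V_n\Big(\big(h(L_0,\cdot)+\sum_{j=1}^{n-1}\lambda^j f_j(K,\cdot)+\lambda^n\vol(K)h(L_n,\cdot)\big)[n]\Big)
$$
as a finite sum of monomials of the form $\lambda^{i_1+\dots+i_n}\vol(K)^{\#\{k:i_k=n\}}V(g_{i_1},\dots,g_{i_n})$, indexed by tuples $(i_1,\dots,i_n)\in\{0,1,\dots,n\}^n$, where I set $g_0=h(L_0,\cdot)$, $g_i=f_i(K,\cdot)$ for $1\leq i\leq n-1$, and $g_n=h(L_n,\cdot)$. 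Regrouping by the total exponent of $\lambda$ yields exactly the explicit expression of $v^\di_j(K)$ referred to in \eqref{vi di}.

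Next I would invoke the convexity hypothesis: each $f_i(K,\cdot)$, $1 \le i \le n-1$, is the support function of a convex body, and by Lemma~\ref{support} the same is true of $h(L_0,\cdot)$ and $h(L_n,\cdot)$. Consequently every $V(g_{i_1},\dots,g_{i_n})$ appearing in the sum is a genuine mixed volume of convex bodies and is therefore non-negative. Since the prefactor $\vol(K)^{\#\{k:i_k=n\}}$ is also non-negative, every summand in the explicit expression of $v^\di_j(K)$ is non-negative.

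The conclusion is then immediate, as a finite sum of non-negative real numbers vanishes if and only if each of its summands does. The ``if'' direction of the claim is trivial; the ``only if'' direction is precisely the non-negativity observation above. I do not foresee a genuine obstacle: the substance of the statement is that the convexity hypothesis is exactly what is needed to promote the formal multilinear expansion, which a priori involves differences of support functions and could in principle exhibit cancellations, to a sum of non-negative terms to which the elementary fact about sums of non-negative numbers applies.
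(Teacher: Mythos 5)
Your argument is correct and coincides with the paper's own proof: under the convexity hypothesis the multilinear expansion of $\vol(\di(\lambda K))$ makes each coefficient $v^{\di}_j(K)$ a sum of non-negative mixed volumes of convex bodies (with non-negative prefactors), so the coefficient vanishes if and only if every summand does. No gaps; your write-up merely makes the expansion and the non-negativity of the prefactor $\vol(K)^{\#\{k:i_k=n\}}$ more explicit than the paper does.
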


\begin{proof}
Let $\di\in \MVal$ satisfy the (VC) condition and let $K\in\K^n$.
If each function $u\mapsto f_j(K,u)$ is a convex function, $1\leq j\leq n-1$, then the coefficients $v^{\di}_j(K)$ are sums of mixed volumes of convex bodies. Thus, these summands are all non-negative and the statement holds.
\end{proof}

We now proceed to prove Theorem~\ref{L0Ln}. 

\begin{proof}[Proof of Theorem~\ref{L0Ln}] 
Let $\di\in\MVal$ satisfy the (VC) condition. We consider its McMullen decomposition, as described in \eqref{h-decomp}, and use the notation of Definition~\ref{notation_fi_dii}.

We first prove that $\dim L_0\neq n$. 
Indeed, since $L_0=\di(\{p\})$, by Remark~\ref{image of a point} and \eqref{h-decomp}, we have that $\dim L_0=n$ implies $\vol(\di(\{p\}))>0$, in contradiction with the (UVC) condition. From now on, we assume $0\leq \dim L_0\leq n-1$. 

Let $Z$ be a fixed $n$-dimensional zonotope. By Theorem~\ref{suppZonoid}, $u\mapsto f_k(Z,u)$ is the support function of a convex body $\di_k(Z)$,
for every $1\leq k \leq n-1$. 
On the other hand, by Corollary~\ref{mixedZero}(ii) we have $v_n^{\di}(Z)\neq 0$, where $v_n^{\di}(Z)$ is the coefficient of the degree $n$ of the polynomial $\vol(\di(\lambda Z))$, in $\lambda$, given in \eqref{vi di}. Therefore, $v_n^{\di}(Z)$ is a 
sum of mixed volumes of the form
$$
V(\di_0(Z)[a_0],\di_1(Z)[a_1],\dots,\di_{n-1}(Z)[a_{n-1}],\di_n(Z)[a_n]),
$$
where $a_0,\dots,a_n\in\{0,1,\dots,n\}$ and satisfy the following two conditions.
\renewcommand\labelitemi{\tiny$\bullet$}
\begin{itemize}\itemsep9pt
\item As the sum of the multiplicities of the entries of a mixed volume is $n$, 
\begin{equation}\label{sum=n}
\sum_{k=0}^na_k=n;
\end{equation}   
\item by \eqref{vi di} and the fact that $V(\di_0[a_0],\di_1(Z)[a_1],\dots,\di_{n-1}(Z)[a_{n-1}],\di_n(Z)[a_n])$ is a summand of $v_n^{\di}(Z)$, 
\begin{equation}
\label{sumk=n}\sum_{k=0}^nka_k=n.
\end{equation}
\end{itemize}

Using the described notation, we prove the following claim. 

\noindent{\bf Claim 1.} \emph{$v_n^{\di}(Z)$ has only one non-zero summand:
\begin{equation}\label{gen_term}
V(\di_0(Z)[a_0],\di_1(Z)[a_1],\dots,\di_{n-1}(Z)[a_{n-1}],\di_n(Z)[a_n])>0,
\end{equation} 
with $a_0,\dots,a_n\in\{0,1,\dots,n\}$ satisfying \eqref{sum=n} and \eqref{sumk=n}. Moreover, 
$$
\dim(\di_k(Z))=a_k,\quad\forall\, k\in\{0,\dots,n\}.
$$}
We note that, a priori, $a_k$, $1\leq k\leq n-1$, may depend on the zonotope $Z$.

At the end of the proof, we will show that this summand is one of the following two:
\begin{enumerate}\itemsep6pt
\item[(A)] $V(L_0[n-1],L_n)$,
\item[(B)] $V(\di_1Z[n])$,
\end{enumerate}
and that this fact implies Theorem~\ref{L0Ln}. We note that the mixed volume on (A) (resp. (B)) corresponds to $a_0=n-1$, $a_n=1$ and $a_j=0$, $1\leq j\leq n-1$ (resp. $a_1=n$, $a_0=0$ and $a_j=0$, $2\leq j\leq n$), which are the trivial solutions of \eqref{sum=n} and \eqref{sumk=n}. 

\medskip We next prove Claim~1. By Corollary~\ref{mixedZero}(ii), there exist $a_0,\dots,a_n$ for which \eqref{gen_term} holds. 
We show that $\dim(\di_k(Z))=a_k$ for every $k\in\{0,\dots,n\}$. This means, in particular, that there is only one possible choice for the numbers $(a_0,a_1,\dots,a_n)$, which implies the whole claim. 

For $a_0,\dots,a_n$ such that \eqref{gen_term} holds, Theorem~\ref{mix_volumes_Schneider} yields that 
$\dim(\di_k(Z))\geq a_k$ and, hence, there exist $a_k$ linearly independent segments $S_1,\dots,S_{a_k}\subset\di_k(Z)$, for every $k\in\{0,\dots,n\}$.
Assume that for some $k\in\{0,\dots,n\}$, $\dim(\di_k(Z))>a_k$. Then, by condition (b) in  Theorem~\ref{mix_volumes_Schneider}, there exists $j\ne k$ such that
$a_j\ge 1$ and 
$$
V(\di_0(Z)[a_0],\dots,\di_k(Z)[a_k+1],\dots,\di_j(Z)[a_j-1],\dots,\di_n(Z)[a_n])>0.
$$ 
However, this mixed volume is one of the summands of the coefficient $v_{n+k-j}^{\di}(Z)$, which has to be zero by Corollaries~\ref{each_zero} and \ref{mixedZero}(iii). Thus, $\dim(\di_k(Z))=a_k$, which concludes the proof of Claim~1.

\smallskip
In the second step of the proof, we will apply Claim~1 to cubes. To do so, we need to introduce some notation. 
Let $\{w_1,\dots,w_n\}$ be a fixed basis of $\R^n$ and define 
$S_i:=[-w_i,w_i]$ and 
\begin{equation}\label{Cn}
C_n:=S_1+\dots+S_n.
\end{equation} 
Clearly $C_n$ is an $n$-dimensional zonotope and since $\di$ satisfies (VC) we have $\dim(\di (C_n))=n$.

We define $a_k:=\dim(\di_k(C_n))$. By Claim~1, $a_k$ coincides with the multiplicity of $\di_k(Z)$ in the mixed volume appearing in \eqref{gen_term}, for $Z=C_n$. 

Let $C(n,k)$ denote, as in Theorem \ref{lem_polarization}, the set of all ordered subsets of $k$ elements among $\{1,\dots,n\}$ and for $\sigma\in C(n,k)$ and $j=1,\dots,k$, 
let $\sigma_j$ denote the $j$-th element of $\sigma$. By Theorem~\ref{lem_polarization},  we have
\begin{align}\label{polarization}
\di_k(C_n)&=\sum_{\sigma\in C(n,k)}\di_k(S_{\sigma_1}+\dots+S_{\sigma_k})\\\nonumber
&=\sum_{\sigma\in C'(n,k)}\di_k(S_{\sigma_1}+\dots+S_{\sigma_k})+\sum_{\sigma
\in C(n,k)\setminus C'(n,k)}\di_k(S_{\sigma_1}+\dots+S_{\sigma_k}),
\end{align}
where $C'(n,k)$ contains those elements $\sigma\in C(n,k)$ for which $\dim(\di_k(S_{\sigma_1}+\dots+S_{\sigma_k}))\neq 0$.
For every $1\leq k\leq n$, we can choose a subset $\Sigma_k\subset C'(n,k)$ which is {\em minimal} in the following sense: first
\begin{equation}\label{minimal_elements}
\dim(\di_k(C_n))=\dim\left(\sum_{\sigma\in\Sigma_k}\di_k(S_{\sigma_1}+\dots+S_{\sigma_k})\right),
\end{equation} 
and, secondly, this equality fails to be true if we omit one of the terms from $\Sigma_k$ in the sum on the right-hand side. 
We note that the number of elements in $\Sigma_k$ is at most $a_k$, which is attained if $\dim(\di_k(S_{\sigma_1}+\dots+S_{\sigma_k}))=1$ for every $\sigma\in\Sigma_k$. Moreover, for every $\sigma\in C'(n,k)$ there exists a subset $\Sigma_k$ which contains $\sigma$ and is minimal. Equation \eqref{minimal_elements} will be used to prove Claim~2 below. For simplicity, we say that \emph{a segment $S_j$ has index in $\Sigma_k$} if there is a $\sigma\in\Sigma_k$ such that $\sigma_l=j$ for some $1\leq l\leq k$.

Equation \eqref{polarization} together with the McMullen decomposition \eqref{h-decomp} yields 
\begin{equation}\label{pol2}
\di (C_n)=\di_0(C_n)+\sum_{k=1}^n\di_k(C_n)=L_0+\sum_{k=1}^{n}\sum_{\sigma\in C'(n,k)}\di_k(S_{\sigma_1}+\dots+S_{\sigma_k})
+q,
\end{equation}
where $q\in\R^n$ is given by $\sum_{k=1}^n\sum_{\sigma
\in C(n,k)\setminus C'(n,k)}\di_k(S_{\sigma_1}+\dots+S_{\sigma_k})$.

We will next focus on the following sum of convex bodies:
\begin{equation}\label{pol3}
\sum_{k=1}^{n}\sum_{\sigma\in C'(n,k)}\di_k(S_{\sigma_1}+\dots+S_{\sigma_k}).
\end{equation}

Let $\tau_i$ be the number of subsets $\sigma\in C'(n,k)$, for all possible choices of $k$ between 1 and $n$, for which $i$ is an element of $\sigma$. 
In other words, $\tau_i$ is the number of summands in \eqref{pol3} in which the segment $S_i$ appears. 
We define $I:=(\tau_1,\dots,\tau_n)\in\N^n$. 

\medskip
\noindent{\bf Claim 2.}
\begin{enumerate}
{\em
\item[\emph{(a)}] For every $1\leq k\leq n$ and $\sigma\in C'(n,k)$,
\begin{equation}\label{0or1}
\dim(\di_k(S_{\sigma_1}+\dots+S_{\sigma_k}))=1.
\end{equation}
\item[\emph{(b)}] For every $1\leq k\leq n$, there are exactly $a_k$ elements $\sigma\in C'(n,k)$ for which \eqref{0or1} holds.
\item[\emph{(c)}] $I=(1,\dots,1)$.}
\end{enumerate}

First we prove that 
\begin{equation}\label{tau_i_geq_1}\tau_k\geq 1, \quad 1\leq k\leq n,\end{equation} 
arguing by contradiction. Without loss of generality we assume that $\tau_1=0$, i.e., 
$$
\dim(\di_k(S_1+S_{\sigma_2}+\dots+S_{\sigma_k}))=0,\quad\forall\, \sigma=(1,\sigma_2,\dots,\sigma_k)\in C(n,k),\;\forall\, k\in\{1,\dots,n\}.
$$
Then, by \eqref{polarization}, we clearly have 
$$
0<\vol(\di(S_1+\dots+S_n))=\vol(\di(S_2+\dots+S_n))
$$
which is a contradiction with (UVC) since $\vol(S_2+\dots+S_n)=0$.  Hence, $\tau_k\geq 1$, $1\leq k\leq n$, and each segment appears at least in one summand in 
\eqref{pol3}.

For the proof of (a) and (b) in Claim~2, we will repeatedly use the following argument: if our claim is not satisfied, we construct, according to the given considerations in each case, appropriate zonotopes so that (UVC) fails to hold for them.

\smallskip
We prove next that \eqref{0or1} holds.  If $k=n$, then \eqref{0or1} is directly satisfied. Indeed, if $a_n\neq 0$, then $a_n=1$ from
\eqref{sumk=n} and, by Claim~1, $\dim(\di_n(C_n))=\dim(\di_n(S_1+\dots+S_n))=1$.

For $1\leq k\leq n-1$, we prove \eqref{0or1} by contradiction. 
Assume that there are $k\in\{1,\dots,n-1\}$ and $\widetilde\sigma\in C'(n,k)$ such that
$$
\dim(\di_k(S_{\widetilde\sigma_1}+\dots+S_{\widetilde\sigma_k}))\geq 2.
$$ 
Let $\Sigma_k\subset C'(n,k)$ be a minimal set containing $\widetilde\sigma$, as defined in \eqref{minimal_elements}. In this situation, because of the minimality of $\Sigma_k$ and Claim~1, the number $q$ of elements of $\Sigma_k$ is at most $(a_k-1)$. We denote by $s\leq qk$ the number of linearly independent segments with index in $\Sigma_k$ and by $P_s$ the zonotope sum of these $s$ segments. For every $1\leq l\leq n$, $l\neq k$, let $\Sigma_l$ be a fixed minimal set. Denote by $s'$ the number of linearly independent segment with index in an element of the set $\{\Sigma_{l}\}_{1\leq l\leq n, l\neq k}$. Let $P_{s'}$ be the zonotope sum of these $s'$ segments. We have 
$$
s'\le\sum_{l=1,\dots,n;\;l\neq k}la_l=n-ka_k.
$$
Let $P$ be the zonotope given by $P=P_s+P_{s'}$. By construction 
$$\dim P\leq s+s'\leq qk+n-ka_k\le (a_k-1)k+n-ka_k=n-k<n$$ 
and, hence, $\vol(P)=0$. On the other hand, recalling \eqref{minimal_elements}, we have 
$\vol(\di(P))\neq 0$. This contradicts (UVC). Thus, we have \eqref{0or1}. 

\smallskip
Hence, \eqref{tau_i_geq_1} and Claim~2(a) prove that each segment $S_1,\dots,S_n$ appears at least in one summand of \eqref{pol3} and that each summand of 
\eqref{pol3} has dimension 1. This implies that, in the sum in \eqref{polarization}, there are at least $a_k$ summands in 
$C'(n,k)$ for every $1\leq k\leq n-1$. In particular, any minimal set $\Sigma_k$ contains exactly $a_k$ elements.

\medskip
We show next Claim~2(b), i.e., we show that for every $1\leq k\leq n$ there are exactly $a_k$ subsets $\sigma\in C'(n,k)$ for which $\dim(\di_k(S_{\sigma_1}+\dots+S_{\sigma_k}))=1$ holds. 
For $k=n$, this follows immediately since $C(n,n)$ contains only one element and either $a_n=0$ or $a_n=1$. Claim~1 yields the result. 
We prove the statement for $1\leq k\leq n-1$ arguing by contradiction. For $1\leq l\leq n-1$, let $\Sigma_l$ be minimal and denote 
$\sigma^1,\dots,\sigma^{a_l}\in \Sigma_l$. 
Fix $k\in\{1,\dots,n-1\}$ and assume that Claim~2(b) does not hold for $k$. 
Then there is $\sigma\in C'(n,k)$ such that $\sigma\neq\sigma^{j}$, $j\in\{1,\dots,a_k\}$. 
Without loss of generality, since \eqref{0or1} holds, we may assume that 
\begin{equation}\label{eq_exchange}
\dim(\di_{k}(S_{\sigma_1}+\dots+S_{\sigma_k})+\di_{k}(S_{\sigma^2_1}+\dots+S_{\sigma^2_k})+\dots+\di_{k}(S_{\sigma^{a_k}_1}+\dots+S_{\sigma^{a_k}_k}))=a_k.
\end{equation}
Let $Q$ be the sum of the $n-ka_k$ segments whose index is in $\Sigma_l$, $1\leq l\leq n$, $l\neq k$ (cf. Claim~2(a)). Then $\dim Q=n-k a_k$. Indeed, if these segments are not linearly independent, i.e., if $\dim Q<n-ka_k$, consider the zonotope 
$$
P:=\sum_{j=1}^{a_k}\sum_{i=1}^kS_{\sigma_i^j}+Q.
$$ 
Then $\dim P <n$. On the other hand, by \eqref{minimal_elements}, $\vol(\di(P))\neq 0$. This contradicts the (UVC) condition. 
Thus we assume next that $\dim Q=n-ka_k$. 

Consider the at most $(k+1)a_k$ segments $S_{\sigma_1},\dots,S_{\sigma_k},S_{\sigma_1^1},\dots,S_{\sigma_k^1},\dots,S_{\sigma_1^{a_k}},\dots,S_{\sigma_k^{a_k}}$. We will distinguish the following mutually excluding cases and define an appropriate zonotope $P'$ in each case:
\begin{enumerate}[i)]
\item Some segment with index in $\Sigma_k$ is already a summand of $Q$. We set
$$
P'=\sum_{j=1}^{a_k}\sum_{i=1}^k S_{\sigma_i^j}.
$$ 
\item No segment with index in $\Sigma_k$ is in $Q$ but some segment $S_{\sigma_1},\dots,S_{\sigma_k}$ is already a summand of $Q$. In this case we set 
$$
P'=S_{\sigma_1}+\dots+S_{\sigma_k}+\sum_{j=2}^{a_k}\sum_{i=1}^kS_{\sigma_{i}^j}.
$$ 

\item Otherwise, all segments $S_{\sigma_1},\dots,S_{\sigma_k}$ have index in $\Sigma_k$, that is, 
$$
\dim(S_{\sigma_1}+\dots+S_{\sigma_k}+\sum_{j=1}^{a_k}\sum_{i=1}^kS_{\sigma_{i}^j})=ka_k.
$$ 
Since $\sigma\neq \sigma^1$, there is an $l$ such that $2\leq l\leq a_k$ and for which  $S_{\sigma_i}=S_{\sigma^l_r}$ for some $1\leq i,r\leq k$. We set 
$$
P'=S_{\sigma_1}+\dots+S_{\sigma_k}+\sum_{l=2}^{a_k}\sum_{i=1}^kS_{\sigma_{i}^l}.
$$ 
\end{enumerate}

Define the zonotope $P:=P'+Q$. By construction, we have $\dim P<n$. On the other hand, by \eqref{pol2}, \eqref{minimal_elements}, and \eqref{eq_exchange}, 
$\dim(\di(P))=n$. This contradicts the (UVC) condition and Claim~2(b) holds also for $1\leq k\leq n-1$. 

\smallskip
Now, the assertion $I=(1,\dots,1)$, which completes the proof of Claim~2, follows immediately from \eqref{sumk=n}, Claim~2(a) and~(b), and the 
fact that $\tau_i\geq1$, $1\leq i\leq n$. Indeed, by Claim~2(b), $C'(n,k)$ contains exactly  $a_k$ elements, for every $k=1,\dots,n$. This means that
there are exactly $ka_k$ indices corresponding to $C'(n,k)$ and, in total, we have $\sum_{k=1}^n ka_k=n$ indices. As each $\tau_i$ is at least 1, we have that 
$\tau_i=1$ for every $i=1,\dots,n$, and Claim~2 is proved.

\medskip
In the next claim we study the relation between the subset $C'(n,k)$ associated to a generalized cube $C_n$ and the subset $C'(n,k)$ associated to another generalized cube, $P$ that differs only in one segment with $C_n$; that is, we compare the distribution of the segments appearing in \eqref{pol3} within the different $\di_k$ for the generalized cubes $C_n$ and $P$. 

\smallskip
\noindent\textbf{Claim 3.}
\emph{Let $C_n=S_1+\dots+S_n$ be as before and let $S$ be a segment such that $\spa S\neq \spa S_i$ for every $1\leq i\leq n$. Define $P:=S+S_2+\dots+S_n$. Let $j\in\{1,\dots,n-1\}$. If 
$$\dim(\di_{j}(S_1+\dots+S_{j}))=1,$$
then }
\begin{enumerate}\itemsep6pt
\item[(a)] $\dim(\di_j(S+S_2+\dots+S_j))=1$ \emph{and} 
\item[(b)] \emph{$\dim(\di_k(S+S_{\sigma_2}+\dots+S_{\sigma_k}))=0$ for every $k\in\{1,\dots,n\}$ and $S_{\sigma_2},\dots,S_{\sigma_k}$ such that $\{2,\dots,j\}\neq\{{\sigma_2},\dots,{\sigma_k}\}$.}
\end{enumerate}

\smallskip
Let $C_n=S_1+\cdots+S_n$ and let $P=S+S_2+\cdots+S_n$ be as in the statement. 
Using \eqref{pol2} for $C_n$ and $P$, we can write
\[
\Phi(C_n)=L_0+\sum_{k=1}^n\sum_{\sigma\in C'(n,k,C_n)}\Phi_k(S_{\sigma_1}+\dots+S_{\sigma_k}) + q, \quad\text{  and}
\]
\[
\Phi(P)=L_0+\sum_{k=1}^n\sum_{\sigma\in C'(n,k,P)}\Phi_k(S_{\sigma_1}+\dots+S_{\sigma_k}) + q',
\]
where $C'(n,k,C_n)$ (resp. $C'(n,k,P)$) denotes the subset of elements $\sigma\in C(n,k)$ for which $\dim(\Phi_k(S_{\sigma_1}+\cdots+S_{\sigma_k}))\neq 0$, for the above sum in $C_n$ (resp. $P$). For $P$, we make an abuse of notation and denote also by $1$ the index associated to $S$.

We compare the central sum
$$A=\sum_{k=1}^n\sum_{\sigma\in C'(n,k,C_n)}\Phi_k(S_{\sigma_1}+\dots+S_{\sigma_k})$$
 with the sum
$$B=\sum_{k=1}^n\sum_{\sigma\in C'(n,k,P)}\Phi_k(S_{\sigma_1}+\dots+S_{\sigma_k}).$$

First, using Claim~2(c), we split $A$ and $B$ as follows:
$$A=\Phi_j(S_1+\cdots+S_j)+\sum_{k=1}^n\sum_{\substack{\sigma\in C'(n,k,C_n) \\ \sigma_l\neq 1, 1\leq l \leq k}}\Phi_k(S_{\sigma_1}+\dots+S_{\sigma_k})=\Phi_j(S_1+\cdots+S_n)+C,$$ 
$$B=\Phi_i(S+S_{\beta_2}+\cdots+S_{\beta_i})+\sum_{k=1}^n\sum_{\substack{\gamma\in C'(n,k,P) \\ \gamma_l\neq 1, 1\leq l \leq k}}\Phi_k(S_{\gamma_1}+\dots+S_{\gamma_k})=\Phi_i(S+S_{\beta_2}\cdots+S_{\beta_i})+D,$$
for $1\leq j,i\leq n$ and $\beta_2,\dots,\beta_i\in\{2,\dots,n\}$.

By Claim~2(c), $S_1$ does not appear in $C$, as well as, $S$ does not appear in $D$. Thus, every summand in $C$ is a summand in $D$ and vice versa, that is, the sums $C$ and $D$ are the same and contain the same segments. 
Therefore, using again Claim~2(c), we obtain that $\beta_m\in\{2,\cdots,j\}$ for $2\leq m\leq i$. Since every segment $S_m$, $2\leq m\leq j$ appears exactly once in $B$, we necessarily have $i=j$. Hence, the proof of (a) is completed. Now (b) follows directly from (a) and Claim~2(c). 

\medskip
\noindent\textbf{Claim 4.} \emph{Let $C_n=S_1+\dots+S_n$ be as in \eqref{Cn}. Then the unique non-zero summand of $v_n^{\di}(C_n)$ (given by Claim~1) is necessarily one of the following:
\begin{enumerate}\itemsep6pt
\item[\emph{(A)}] $V(L_0[n-1],L_n)\vol(C_n)$,
\item[\emph{(B)}] $V(\di_1(C_n)[n])$.
\end{enumerate}}

\smallskip
In the notation of Claim~1, this is equivalent to say that, for $C_n$, either
\begin{enumerate}\itemsep4pt
\item[$(\widetilde{A})$] $(a_0,\dots,a_n)=(n-1,0,\dots,0,1)$ or
\item[$(\widetilde B)$]  $(a_0,\dots,a_n)=(0,n,0,\dots,0)$. 
\end{enumerate}
Notice that this yields the statement of Theorem~\ref{L0Ln}. Indeed, by Remark~\ref{image of a point}, $L_0=\di(\{p\})$ for every $p\in\R^n$ and, by Claim~1, $\dim(\di_0(C_n))=\dim L_0=a_0$. Hence, we have that either $\dim(\di(\{p\}))=n-1$ or $\dim(\di(\{p\}))=0$.

\bigskip
First notice that the case $n=2$ is trivial, since by \eqref{sum=n} and \eqref{sumk=n}, the only possibilities for $(a_0,a_1,a_2)$ are $(1,0,1)$ and $(0,2,0)$. We assume in the following that $n\geq 3$. 

If $a_n=1$, by \eqref{sumk=n}, we obtain that $a_j=0$ for every $1\leq j\leq n-1$. Furthermore, using \eqref{sum=n}, $a_0=n-1$ and we are in case $(\widetilde A)$. 
If $a_n\neq 1$, then \eqref{sumk=n} yields $a_n=0$. Moreover, either $a_1=n$ and we are in case $(\widetilde B)$ or $a_1\neq n$, what the following argument proves to be impossible.

Let $C_n=S_1+\cdots+S_n$ and assume that we have $a_n=0$ and $a_1\neq n$. 
Define $$k_0:=\min\{k : a_k\neq 0, 1\le k\leq n\}.$$ Notice that, as proved at the beginning of the proof of Theorem~\ref{L0Ln}, $\dim L_0=a_0<n$. Hence by \eqref{sumk=n}, there exists $k\geq 1$ such that $a_k\neq 0$; in particular, $k_0$ is well-defined. If $k_0a_{k_0}\neq n$, define $k_1:=\min\{k : a_k\neq 0, k> k_0\}$, while 
if $k_0a_{k_0}=n$, set $k_1=k_0$. The existence of $k_1$ is guaranteed by \eqref{sumk=n}.

We claim that $k_1>1$. Indeed, if $k_1=1$, then $1=k_0=k_1$ and thus, by the definition of $k_1$, $k_0 a_{k_0}=a_1=n$, but this contradicts the assumption that $a_1\neq n$.

We claim also that $k_0<n$. Indeed,  $k_0=n$ means that $a_n\neq 0$, which by \eqref{sumk=n} is equivalent to $a_n=1$, but we are assuming $a_n=0$. 

We next claim that
\begin{equation}\label{aggiunta 2}
k_0+k_1\le n.
\end{equation}
Indeed, if $k_0<k_1$, by \eqref{sumk=n} we have $k_0+k_1\leq k_0a_{k_0}+k_1a_{k_1}\leq n$. Assume now that $k_0=k_1$; then we have  
$k_0 a_{k_0}=n$. We study the quantity $2k_0=k_0+k_1$, depending on $a_{k_0}$. If $a_{k_0}=1$, then $k_0=n$, but this is not possible, as we have
$a_n=0$. If $a_{k_0}=2$, then $2k_0=a_{k_0}k_0=n$ and \eqref{aggiunta 2} holds. Finally, if $a_{k_0}>2$ then $2k_0<a_{k_0}k_0=n$. Inequality \eqref{aggiunta 2} is proved.

Using \eqref{0or1}, \eqref{aggiunta 2}, and Claim~2(c), we may assume without loss of generality that 
\begin{equation}\label{not_zero}
\dim(\di_{k_0}(S_1+\dots+S_{k_0}))=1\quad\textrm{and}\quad\dim(\di_{k_1}(S_{k_0+1}+\dots+S_{k_0+k_1}))=1.
\end{equation}

We will apply Claim~3 to the following generalized cubes to obtain the contradiction. Consider the bases of $\R^n$ given by $\{w_1,\dots,w_{k_0},w_{k_0}+w_{k_0+1},w_{k_0+2},\dots,w_n\}$ and \\ $\{w_1,\dots,w_{k_0-1},w_{k_0}+w_{k_0+1},w_{k_0+1},\dots,w_n\}$, and the associated zonotopes 
$$\widetilde C_n:=S_1+\dots+S_{k_0}+S_{k_0,k_0+1}+S_{k_0+2}+\dots+S_n
$$
and
$$\overline C_n:=S_1+\dots+S_{k_0-1}+S_{k_0,k_0+1}+S_{k_0+1}+\dots+S_n.$$  
Here we denote $S_{k_0,k_0+1}:=[-(w_{k_0}+w_{k_0+1}),w_{k_0}+w_{k_0+1}]$. 
Observe that the choice of the first basis cannot be done if $k_0=n$, i.e., $a_n=1$, which corresponds to $(\widetilde A)$ of Claim~4.

By~\eqref{not_zero}, we have 
$$\dim(\di_{k_0}(S_1+\dots+S_{k_0}))=1.$$ 
Hence, if $k_0\geq 2$, then Claim~3(b) yields 
\begin{equation}\label{basisZero}
\dim(\di_{k_0}(S_1+\dots+S_{k_0-1}+S_{k_0,k_0+1}))=0. 
\end{equation}
If $k_0=1$ and $k_1\geq 2$, we obtain 
\begin{equation}\label{basisZero_k0=1}
\dim(\di_{k_1}(S_{1,2}+S_{3}+\dots+S_{k_1+1}))=1\quad\textrm{and}\quad\dim(\di_1(S_{1,2}))=0.
\end{equation}

Applying Claim~3(a) to the cubes $C_n$ and $P=\overline C_n$, and using \eqref{not_zero}, we have that 
$$\dim(\di_{k_0}(S_1+\dots+S_{k_0}))=1$$  
implies
\begin{equation}\label{basisNonZero}
\dim(\di_{k_0}(S_1+\dots+S_{k_0-1}+S_{k_0,k_0+1}))=1,
\end{equation}
which for $k_0=1$ means
\begin{equation}\label{basisNonZero_k0=1}
\dim(\di_{1}(S_{1,2}))=1.
\end{equation}

Hence, if $k_0\geq 2$, \eqref{basisZero} together with \eqref{basisNonZero} yields a contradiction. If $k_0=1$ and $k_1\geq 2$, then \eqref{basisZero_k0=1} with \eqref{basisNonZero_k0=1} yields also a contradiction. 
We note that there is no contradiction if $k_0=k_1=1$, which corresponds to case $(\widetilde B)$ of Claim~4, since in this case we have 
$$\dim(\di_{1}(S_1))=\dim(\di_1(S_{1,2}))=\dim(\di(S_2))=1.$$

Thus, we have proved Claim~4, which concludes the proof of the theorem.
\end{proof}

From the proof of the previous theorem, especially from Claim~1, and by approximation of arbitrary zonoids by $n$-dimensional ones, we deduce the following result.

\begin{corollary}\label{cor:dimfi}
Let $n\geq 2$ and let $\di\in\MVal$ satisfy (VC).  
\begin{enumerate}
\item[\emph{(i)}] If $\dim(\di\{0\})=\dim L_0=n-1$, then $\dim L_n=1$ and $\dim(L_0+L_n)=n$.
\item[\emph{(ii)}] If $\dim L_0=n-1$ and $Z$ is a zonoid, then $\dim(\di_j(Z))=0$ for every $j=1,\dots,n-1$.
\item[\emph{(iii)}] If $\dim L_0=0$ and $Z$ is a zonoid, then $\dim(\di_j(Z))=0$ 
for every $j=2,\dots,n$. In particular, $L_n$ is a point.
\end{enumerate}
\end{corollary}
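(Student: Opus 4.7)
The plan is to derive all three items directly from Claim~1 in the proof of Theorem~\ref{L0Ln}, combined with the dichotomy $\dim L_0\in\{0,n-1\}$ established there. The key observation is that once $a_0=\dim L_0$ is prescribed to lie in $\{0,n-1\}$, the constraints \eqref{sum=n} and \eqref{sumk=n} admit a \emph{unique} non-negative integer solution.

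First I would fix any $n$-dimensional zonotope $Z$ (for instance a generalized cube $C_n$) and invoke Claim~1 in the proof of Theorem~\ref{L0Ln}: this yields indices $a_0,\ldots,a_n$ with $\sum_{k=0}^n a_k=n$, $\sum_{k=0}^n k a_k=n$, and $\dim\di_k(Z)=a_k$ for every $k$, where $a_0=\dim L_0$ is independent of $Z$. If $a_0=n-1$, then $\sum_{k\ge 1}a_k=1$ together with $\sum_{k\ge 1}k a_k=n$ forces $a_n=1$ and $a_j=0$ for $1\le j\le n-1$. If instead $a_0=0$, subtracting the first relation from the second gives $a_2+2a_3+\cdots+(n-1)a_n=0$, so that $a_k=0$ for $k\ge 2$ and consequently $a_1=n$.

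Statement (i) is immediate from the first case: $a_n=1$ together with the identity $\di_n(Z)=\vol(Z)L_n$ and $\vol(Z)>0$ gives $\dim L_n=1$, while the positivity $V(L_0[n-1],L_n)>0$ guaranteed by Claim~1, combined with Theorem~\ref{mix_volumes_Schneider}, yields $\dim(L_0+L_n)=n$. For an $n$-dimensional zonotope $Z$, statements (ii) and (iii) read off directly from the corresponding unique solutions $a_j=0$ for $1\le j\le n-1$ (respectively $a_j=0$ for $2\le j\le n$), and the assertion that $L_n$ is a point in (iii) follows by taking $Z=C_n$ and using $\di_n(C_n)=\vol(C_n)L_n$ with $\vol(C_n)>0$.

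The remaining task is the extension of (ii) and (iii) to zonoids $Z$ of dimension strictly less than $n$, which I would handle by approximation: the bodies $Z_\varepsilon:=Z+\varepsilon C_n$ are $n$-dimensional zonoids for every $\varepsilon>0$ and converge to $Z$ as $\varepsilon\to 0^+$. By continuity of the map $K\mapsto f_j(K,u)$ in the McMullen decomposition, the support functions $h(\di_j(Z_\varepsilon),\cdot)$ converge pointwise to $h(\di_j(Z),\cdot)$, so $\di_j(Z_\varepsilon)\to\di_j(Z)$ in the Hausdorff metric; being a Hausdorff limit of singletons, $\di_j(Z)$ must itself be a singleton. No substantial obstacle is anticipated here: the whole argument is a bookkeeping consequence of Claim~1 once the arithmetic under the two admissible values of $a_0$ has been settled, with the only care point being the passage from $n$-dimensional zonotopes to possibly degenerate zonoids via the continuity argument above.
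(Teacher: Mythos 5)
Your argument is correct and is precisely the route the paper intends: the paper derives this corollary from Claim~1 in the proof of Theorem~\ref{L0Ln} together with approximation of arbitrary zonoids by $n$-dimensional ones, and your write-up simply makes explicit the arithmetic showing that \eqref{sum=n} and \eqref{sumk=n} have a unique solution once $a_0=\dim L_0\in\{0,n-1\}$ is prescribed. The approximation step via $Z+\varepsilon C_n$ and pointwise convergence of the linear functions $f_j(Z_\varepsilon,\cdot)$ is also sound.
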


\section{On the McMullen decomposition of valuations satisfying (VC)}\label{n-1}
In this section we will investigate more deeply the properties of the homogeneous functions in the McMullen decomposition in \eqref{h-decomp} for Minkowski valuations satisfying (VC).
The two next lemmas recall standard facts, which will be often used in the following.

\begin{lemma}\label{standard facts} Let $n\geq 2$ and $j\in\{0,1,\dots,n\}$. 
\begin{enumerate}
\item[\emph{(i)}]
If $\rv\in\Val_j$ vanishes on $j$-dimensional simplices, then $\mu$ vanishes on every $j$-di\-men\-sio\-nal convex body.
\item[\emph{(ii)}] If $\dib\in\MVal_j$ satisfies that $\dim(\dib(T))=0$ for every $j$-dimensional simplex $T$, then $\dim(\dib(K))=0$ for every $j$-dimensional convex body $K$.
\end{enumerate}
\end{lemma}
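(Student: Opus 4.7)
My plan is to derive (ii) from (i) through a support-function argument, so let me first address (i).

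For (i), given a $j$-dimensional convex body $K$, let $E$ be the $j$-dimensional affine subspace spanned by $K$. The restriction of $\mu$ to $\K(E)$ is a continuous, translation invariant (in the directions parallel to $E$), and $j$-homogeneous real-valued valuation on a $j$-dimensional space. By Hadwiger's classification (in the top-degree case, where $j$ equals the dimension of the ambient affine space), this restriction is a constant multiple $c_E$ of the $j$-dimensional Lebesgue measure on $E$. Since there exist $j$-dimensional simplices inside $E$ and by hypothesis $\mu$ vanishes on all such simplices, $c_E=0$. Hence $\mu(K)=0$, proving (i).

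For (ii), I would fix $u\in\R^n$ and define
$$\mu_u(K):=h(\dib(K),u)+h(\dib(K),-u).$$
Using that $\dib\in\MVal_j$ and the linearity of $h(\cdot,u)$ with respect to Minkowski addition (see \eqref{add_sup}), together with the valuation property of $\dib$, the map $\mu_u$ is a continuous, translation invariant, and $j$-homogeneous real-valued valuation, i.e.\ $\mu_u\in\Val_j$. Moreover, $\mu_u(K)\ge 0$, since the quantity $h(A,u)+h(A,-u)$ is exactly the width of the convex body $A$ in direction $u$. By assumption, for every $j$-dimensional simplex $T$ we have $\dim(\dib(T))=0$, so $\dib(T)$ is a point and consequently $h(\dib(T),\cdot)$ is linear, yielding $\mu_u(T)=0$.

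Applying part (i) to $\mu_u$, we conclude $\mu_u(K)=0$ for every $j$-dimensional $K$ and every $u$. Therefore $h(\dib(K),\cdot)$ is an odd function, and since support functions are sublinear, this forces $h(\dib(K),\cdot)$ to be linear, i.e.\ $\dib(K)$ is a single point, which gives $\dim(\dib(K))=0$, as required.

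I expect no serious obstacles here: the only subtlety is in (i), where one must note that the classical Hadwiger-type statement characterizing top-degree homogeneous valuations as multiples of Lebesgue measure is being applied inside an affine subspace $E$ rather than on $\R^n$ itself; this is entirely standard. The passage from (i) to (ii) via the width functional $h(\cdot,u)+h(\cdot,-u)$ is the key reduction and makes the dimension-zero condition on the image directly accessible through real-valued valuation theory.
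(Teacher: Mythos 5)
Your proof is correct, but it takes a genuinely different route from the paper's. The paper disposes of both parts with the standard machinery: approximate a $j$-dimensional convex body by $j$-dimensional polytopes, decompose each polytope into simplices meeting in lower-dimensional faces, and invoke the valuation property plus continuity. You instead prove (i) by restricting $\rv$ to the affine hull $E$ of $K$ and applying Hadwiger's volume theorem (the top-degree case: a continuous, translation invariant valuation on a $j$-dimensional space that is homogeneous of degree $j$ is a constant multiple of $j$-dimensional Lebesgue measure, cf.\ Theorem~6.4.14 in \cite{schneider.book14}), so that vanishing on simplices forces the constant $c_E$ to be zero; this trades the elementary decomposition argument for a deeper but entirely standard classification result, and all the hypotheses (continuity, invariance under translations parallel to $E$, top-degree homogeneity after translating $E$ through the origin) do transfer to the restriction. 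Your derivation of (ii) from (i) via the width functional $\mu_u(K)=h(\dib(K),u)+h(\dib(K),-u)$ is a clean reduction that the paper does not make explicit: $\mu_u\in\Val_j$, it is nonnegative, it vanishes on $j$-simplices by hypothesis, hence by (i) it vanishes on all $j$-dimensional bodies, and a convex body of zero width in every direction (equivalently, one whose sublinear support function is odd, hence linear) is a point. Both approaches are valid; yours is more conceptual and shifts the work onto a known classification theorem, while the paper's is more self-contained and treats (i) and (ii) uniformly.
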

\begin{proof}
The proof of both statements follows by standard approximation arguments. Indeed, each convex body can be approximated in the Hausdorff distance by polytopes \cite[Theorem 1.8.16]{schneider.book14}. Moreover, each polytope can be decomposed in a finite number of simplices (simplicial 
decomposition) whose intersection is either empty or a lower-dimensional simplex (see e.g. \cite[Proof of Theorem 6.3.1]{schneider.book14}). The statement 
follows by using the valuation property and the continuity of the valuation. 
\end{proof}

\begin{lemma}\label{standard fact ii}
Let $n\geq 2$ and let $T$ be a $j$-dimensional simplex, $2\leq j\leq n$. Then there exists a convex polytope $P$ such that $T\cup P$ is a convex zonotope and $T\cap P$ has 
dimension $j-1$. 
\end{lemma}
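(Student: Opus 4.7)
The plan is to realize $T$ as a "corner" of a parallelepiped, which is a canonical zonotope, and to obtain $P$ as the complementary piece.

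First, write the vertices of $T$ as $v_0,v_1,\dots,v_j$ and consider the $j$ linearly independent vectors $e_i := v_i-v_0$, $i=1,\dots,j$. Define the parallelepiped
\[
Z := v_0 + \sum_{i=1}^{j}[0,e_i] = v_0+\sum_{i=1}^{j}\Bigl(\tfrac{e_i}{2}+[-\tfrac{e_i}{2},\tfrac{e_i}{2}]\Bigr),
\]
which, up to the translation in $\R^n$, is a Minkowski sum of $j$ segments, hence a convex zonotope.

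Next, cut $Z$ by the affine hyperplane $H$ (relative to the affine hull of $T$) through the points $v_1,\dots,v_j$. Parameterize $Z$ by the affine injection $\alpha:\R^j\to\R^n$, $\alpha(x)=v_0+\sum_{i=1}^{j}x_ie_i$, so that $\alpha([0,1]^j)=Z$. The standard corner simplex $T_0:=\{x\in[0,1]^j:\sum_{i}x_i\le 1\}$ satisfies $\alpha(T_0)=T$, and the complementary convex polytope $P_0:=\{x\in[0,1]^j:\sum_{i}x_i\ge 1\}$ intersects $T_0$ exactly in the $(j-1)$-dimensional simplex $\mathrm{conv}(e_1,\dots,e_j)$. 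Setting $P:=\alpha(P_0)$, we obtain a convex polytope in $\R^n$ with
\[
T\cup P=\alpha(T_0\cup P_0)=\alpha([0,1]^j)=Z
\]
and
\[
T\cap P=\alpha(T_0\cap P_0)=\mathrm{conv}(v_1,\dots,v_j),
\]
which is a $(j-1)$-dimensional simplex (this is where the hypothesis $j\ge 2$ is used, ensuring $T\cap P$ is at least $1$-dimensional, though the argument works for $j\ge 1$ as well).

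There is no real obstacle: the construction is elementary and uses only that parallelepipeds are zonotopes and that affine maps preserve both simplices and zonotopes. The only point worth checking carefully is that $Z$ is genuinely convex (clear, as it is a Minkowski sum) and that $T$ and $P$ cover $Z$ without overlap in the interior, which is immediate from the defining inequality $\sum_i x_i \lessgtr 1$ on $[0,1]^j$.
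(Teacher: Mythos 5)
Your proof is correct and is essentially the paper's argument: both realize $T$ as the corner simplex of a parallelepiped (a zonotope) built on the edge vectors at one vertex, and take $P$ to be the closure of the complementary piece cut off by the hyperplane through $v_1,\dots,v_j$; the paper merely routes this through a map $g\in\GL(n)$ to the standard simplex and standard cube and then pulls back. (Only cosmetic remark: in the parameter space the intersection $T_0\cap P_0$ is the convex hull of the standard basis vectors of $\R^j$, not of your vectors $e_i\in\R^n$ — the notation briefly conflates the two, but the conclusion $T\cap P=\mathrm{conv}(v_1,\dots,v_j)$ is right.)
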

\begin{proof}
Let $T$ be a $j$-dimensional simplex, $2\leq j\leq n$. Without loss of generality we may assume that one vertex of $T$ is the origin. Let $g\in \GL(n)$ be such that $g(T)$ is the standard $j$-dimensional 
simplex of the hyperplane 
$
H=\{(x_1,\dots,x_n)\,:\,x_{j+1}=\dots=x_n=0\}.
$ 

Let $C_j$ be the unit standard cube in $H$. The set $C_j\setminus g(T)$ is convex (as the intersection of $C_j$ with an open half-space of $H$) and its closure is a polytope $P$. Let $P'=g^{-1}(P)$. Then $C_j=g(T) \cup g(P')$. This shows that the desired statement holds for $g(T)$. 
Applying now $g^{-1}$, we obtain it for $T$.
\end{proof}

\begin{lemma}\label{f_i=0 esencia}
Let $n\geq 2$, $\di\in\MVal$, and $1\leq j\leq n-1$. 
Suppose that the mapping $u\mapsto f_j(Z,u)$, associated to $\di$ as defined in \eqref{h-decomp}, is a linear function for every $Z$ zonotope in $\K^n$. Then:
\begin{itemize}\itemsep9pt
\item[(i)] $u\mapsto f_j(K,u)$ is a linear function for every $K\in\K^n$ with $\dim K=j$;
\item[(ii)] if $u\mapsto f_j(K,u)$ is a support function for every $K\in\K^n$ with $\dim K=j+1$, then $u\mapsto f_j(K,u)$ is a linear function for every $K\in\K^n$ with 
$\dim K=j+1$.
\end{itemize}
\end{lemma}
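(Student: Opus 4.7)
My approach rests on the following simple but crucial observation: if two support functions $u\mapsto\alpha(u)$, $u\mapsto\beta(u)$ of convex bodies $A,B\subset\R^n$ satisfy that $\alpha+\beta$ is a linear function of $u$, then $\alpha+\beta=h_{A+B}$ is the support function of a single point, so $A+B$ is a singleton, which forces $\dim A=\dim B=0$. Hence each of $\alpha,\beta$ is already linear. The strategy is to use Lemma~\ref{standard fact ii} to pair a simplex $T$ of the appropriate dimension with a complementary polytope $P$ that completes $T$ to a zonotope, and then apply this observation to transfer linearity from zonotopes to simplices, finally extending to arbitrary convex bodies by simplicial decomposition and approximation.

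For (i), I would fix a $j$-dimensional simplex $T$ and invoke Lemma~\ref{standard fact ii} to obtain a $j$-dimensional polytope $P$ such that $Z:=T\cup P$ is a $j$-dimensional zonotope and $\dim(T\cap P)=j-1$. Applying the valuation property in $K$ from Lemma~\ref{r: facts on f_js}(ii) to $\{T,P\}$ gives
\[
f_j(T,u)+f_j(P,u)=f_j(Z,u)+f_j(T\cap P,u).
\]
The last term vanishes by Lemma~\ref{r: facts on f_js}(iii), and $f_j(Z,\cdot)$ is linear by hypothesis; since both $T$ and $P$ are $j$-dimensional, both $u\mapsto f_j(T,u)$ and $u\mapsto f_j(P,u)$ are support functions by Lemma~\ref{r: facts on f_js}(iv), so the observation forces both to be linear. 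To pass from $j$-dimensional simplices to arbitrary $j$-dimensional convex bodies, I use the standard argument of Lemma~\ref{standard facts}: a simplicial decomposition of a $j$-dimensional polytope together with inclusion-exclusion and the vanishing of $f_j$ in dimensions below $j$ yields linearity of $f_j(\cdot,u)$ on every $j$-dimensional polytope, and continuity of $f_j$ on $(K,u)$ combined with polytopal approximation inside the affine hull of $K$ handles the general case (a pointwise limit of linear functions on $\R^n$ is linear).

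Part (ii) is an immediate bootstrapping of (i) with the same recipe. Given a $(j+1)$-dimensional simplex $T$, Lemma~\ref{standard fact ii} produces a $(j+1)$-dimensional polytope $P$ with $Z:=T\cup P$ a $(j+1)$-dimensional zonotope and $\dim(T\cap P)=j$. The same identity
\[
f_j(T,u)+f_j(P,u)=f_j(Z,u)+f_j(T\cap P,u)
\]
now has both right-hand-side terms linear in $u$: $f_j(Z,\cdot)$ by hypothesis, and $f_j(T\cap P,\cdot)$ by part (i) just established. The extra assumption of (ii) guarantees that $f_j(T,\cdot)$ and $f_j(P,\cdot)$ are support functions, and the observation again forces linearity. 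Extension to arbitrary $(j+1)$-dimensional convex bodies proceeds as in step~2: a simplicial decomposition reduces to the simplex case, and the intersection terms in inclusion-exclusion have dimension at most $j$, contributing linear functions by part (i) or vanishing by Lemma~\ref{r: facts on f_js}(iii); continuity and approximation finish the argument.

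The main obstacle is the verification that the support-function hypothesis is available exactly where needed. In (i) it comes for free from Lemma~\ref{r: facts on f_js}(iv), since both pieces $T$ and $P$ in Lemma~\ref{standard fact ii} are $j$-dimensional. In (ii), one must genuinely invoke the extra hypothesis to know $f_j(T,\cdot)$ and $f_j(P,\cdot)$ are support functions, and crucially part (i) is needed to linearize the interface piece $T\cap P$, whose dimension is exactly $j$. Beyond this coordination the remaining work is a standard simplicial-decomposition plus continuity argument.
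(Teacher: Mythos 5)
Your proposal is correct and follows essentially the same route as the paper's proof: pair a simplex $T$ with the complementary polytope $P$ from Lemma~\ref{standard fact ii}, use the valuation identity together with Lemma~\ref{r: facts on f_js}(iii)--(iv) (resp.\ the extra hypothesis in (ii) and part (i) for the interface $T\cap P$), and observe that a sum of two support functions that is linear forces both summands to be support functions of points; the extension to arbitrary bodies is exactly what Lemma~\ref{standard facts}(ii) encapsulates. No gaps.
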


\begin{proof}
Let $\di\in\MVal$, let $1\leq j\leq n-1$, and let $u\mapsto f_j(Z,u)$ be a linear function for every $Z$ zonotope in $\K^n$.
We note that, from Theorem~\ref{suppZonoid}, $f_j(Z,\cdot)$ is a support function for every zonotope $Z$ and hence we can 
write $\di_j(Z)$ for the convex body whose support function is $f_j(Z,\cdot)$. Moreover, $\dim \di_j(Z)=0$, since a convex body with linear support function is a point.
\begin{enumerate}\itemsep9pt
\item[(i)] Let $T$ be a $j$-dimensional simplex and let $P$ be a polytope given by Lemma~\ref{standard fact ii}. Then $T\cup P$ is a zonotope and, by hypothesis, $\dim(\di_j(T\cup P))=0$. 
Furthermore, since $\dim (T\cap P)=j-1$, Lemma~\ref{r: facts on f_js}(iii) yields $f_j(T\cap P,\cdot)\equiv 0$. Hence, $f_j(T\cap P,\cdot)$ is the support function of $\di_j(T\cap P)=\{0\}$. 
Moreover, from Lemma~\ref{r: facts on f_js}(iv), for every $j$-dimensional convex body $K$, $u\mapsto f_j(K,u)$ is the support function of a convex body $\di_j(K)$.  Thus, if $f_j(T,\cdot)$ and $f_j(P,\cdot)$ are the support functions of $\di_j(T)$ and $\di_j(P)$, resp., we have 
\begin{equation*}\di_j(T\cup P)=\di_j(T\cup P)+\di_j(T\cap P)=\di_j(T)+\di_j(P).\end{equation*}
Hence, $\dim(\di_j(T))=0$ for every simplex $T$ of dimension $j$. 
The statement follows by Lemma~\ref{standard facts}(ii). 

\item[(ii)] Similarly to the argument in (i), we let $T$ be a $(j+1)$-dimensional simplex and let $P$ be given by Lemma~\ref{standard fact ii}. Since $\dim(T\cap P)=j$, from the previous item we have that $\dim(\di_j(T\cap P))=0$. 
On the other hand, since $T\cup P$ is a zonotope, by hypothesis we have $\dim(\di_j(T\cup P))=0$.
Using now that $f_{j}(K,\cdot)$ is a support function for any $K\in\K^n$ with $\dim K=j+1$, we obtain
$$
\dim(\di_j(T\cup P)+\di_j(T\cap P))=\dim(\di_j(T)+\di_j(P)).
$$ 
Therefore, $\dim(\di _j(T))=0$, and Lemma~\ref{standard facts}(ii) yields the result.   
\end{enumerate}
\end{proof}

\begin{lemma}\label{513}Let $n\geq 2$ and let $\di\in\MVal$ satisfy (VC) and $\dim(\di(\{0\}))=n-1$. Then:
\begin{enumerate}
\item[\emph{(i)}] 
for every $1\leq j\leq n-1$ and $K\in\K^n$ with $\dim K=j$, $f_j(K,\cdot)$ is a linear function;
\item[\emph{(ii)}] if further $\di\in\MVal^s$, then $f_j(K,\cdot)\equiv 0$ for every $1\leq j\leq n-1$ and $K\in\K^n$ with $\dim K=j$.
\end{enumerate}
\end{lemma}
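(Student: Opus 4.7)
My plan is that this lemma follows almost immediately by combining earlier results, so the proof should be short and structured as a two-step argument for (i) and a one-line addition for (ii).

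For part (i), the key observation is that by Remark on the image of a point, $\di(\{0\}) = L_0$, so the hypothesis $\dim(\di(\{0\})) = n-1$ is exactly $\dim L_0 = n-1$. I would then invoke Corollary \ref{cor:dimfi}(ii): under this dimension hypothesis, for every zonoid $Z$ and every $j \in \{1,\ldots,n-1\}$, we have $\dim(\di_j(Z)) = 0$. Since $\di_j(Z)$ is a point, its support function $u \mapsto f_j(Z,u)$ is a linear function of $u$, for every zonoid $Z$. This places us precisely in the hypothesis of Lemma \ref{f_i=0 esencia}(i), and applying that lemma directly yields that $u \mapsto f_j(K,u)$ is linear for every $K \in \K^n$ with $\dim K = j$ and every $1 \leq j \leq n-1$. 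This gives (i).

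For part (ii), assume further that $\di \in \MVal^s$. By Lemma \ref{even}, the $j$-homogeneous component $u \mapsto f_j(K,u)$ is an even function of $u$ for every $K \in \K^n$ and every $j$. Combining this with (i), for any $K$ with $\dim K = j$, the function $u \mapsto f_j(K,u)$ is simultaneously linear and even on $\R^n$, hence identically zero. This establishes (ii).

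There is no real obstacle here: the content of the lemma is essentially packaged in Corollary \ref{cor:dimfi}(ii) (which controls the $\di_j(Z)$ for zonoids) and Lemma \ref{f_i=0 esencia}(i) (which propagates linearity of $f_j$ from zonotopes to all $j$-dimensional bodies via a decomposition of a simplex through Lemma \ref{standard fact ii}). The only subtle point to flag is to make sure one applies Lemma \ref{f_i=0 esencia}(i) and not (ii), since the latter requires the additional hypothesis that $f_j(K,\cdot)$ is a support function on $(j+1)$-dimensional bodies, which we do not need here — we only claim linearity on exactly $j$-dimensional bodies. The symmetrization step for (ii) is then a one-line observation from Lemma \ref{even}.
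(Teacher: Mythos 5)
Your proof is correct and follows exactly the paper's argument: part (i) is Corollary~\ref{cor:dimfi}(ii) combined with Lemma~\ref{f_i=0 esencia}(i), and part (ii) adds Lemma~\ref{even} to conclude that a linear even function vanishes. Your remark about invoking Lemma~\ref{f_i=0 esencia}(i) rather than (ii) is the right thing to check, and no further detail is needed.
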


\begin{proof}
Let $\di$ be as in the statement. 
\begin{enumerate}
\item[(i)]
By Corollary~\ref{cor:dimfi}(ii), each function $u\mapsto f_j(Z,u)$, $1\leq j\leq n-1$, associated to $\di$ is the support function of a point, and hence it is a linear function. Lemma~\ref{f_i=0 esencia}(i) yields the statement. 
\item[(ii)] Let now $\di$ be also an $o$-symmetrization. By Lemma~\ref{even}, the function $u\mapsto f_j(K,u)$ is even for every $1\leq j\leq n-1$ and $K\in\K^n$. On the other hand, by item (i), we know that $u\mapsto f_j(K,u)$ is a linear function for every $K\in\K^n$ with $\dim K=j$. Both conditions imply that $f_j(K,u)=0$ for every $u\in\R^n$ and $K\in\K^n$ with $\dim K=j$.
\end{enumerate}
\end{proof}
 
\begin{lemma}\label{l: fi(K,u)=0}
Let $n\geq 2$ and let $\di\in\MVal^s$ satisfy (VC) and 
$
\dim(\di(\{0\}))=n-1.
$ 
Then $f_{j}(K,u)=0$ for every $u\in\R^n$, $1\leq j\leq n-1$, and $K\in\K^n$.
\end{lemma}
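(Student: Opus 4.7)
The plan is to combine Lemma~\ref{513}(ii), which handles $\dim K = j$, with the structural tools of Section~\ref{n-1} in order to extend the vanishing of $f_j(K,\cdot)$ to arbitrary $K\in\K^n$. By Lemma~\ref{r: facts on f_js}(iii) and Lemma~\ref{513}(ii) we already have $f_j(K,\cdot)\equiv 0$ whenever $\dim K\leq j$, and by Lemma~\ref{r: facts on f_js}(v) it is enough to establish the vanishing for $\dim K=j+1$.

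For $1\leq j\leq n-2$ and $K$ with $\dim K=j+1\leq n-1$, I would argue that every $f_l(K,\cdot)$ with $l>j$ is linear: $f_l(K,\cdot)\equiv 0$ for $l\geq j+2$ by Lemma~\ref{r: facts on f_js}(iii); $f_{j+1}(K,\cdot)\equiv 0$ by Lemma~\ref{513}(ii); and $f_n(K,u)=V_n(K)\,h(L_n,u)\equiv 0$ since $V_n(K)=0$. Lemma~\ref{r: facts on f_js}(vi) then gives that $u\mapsto f_j(K,u)$ is a support function. On the zonotope side, Corollary~\ref{cor:dimfi}(ii) together with Lemma~\ref{even} yields $f_j(Z,\cdot)\equiv 0$ (in particular linear) for every zonotope $Z$. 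Hence Lemma~\ref{f_i=0 esencia}(ii) applies, forcing $f_j(K,\cdot)$ to be linear; combining linearity with even-ness gives $f_j(K,\cdot)\equiv 0$.

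The remaining case $j=n-1$ has to be treated separately because $f_n(K,\cdot)$ is not linear when $V_n(K)>0$. At this point, though, $f_l\equiv 0$ for $l=1,\dots,n-2$ by the previous step, so for $\dim K=n$ the McMullen decomposition reduces to
\[
h(\di(\lambda K),u)=h(L_0,u)+\lambda^{n-1}f_{n-1}(K,u)+\lambda^n V_n(K)\,h(L_n,u).
\]
I would expand $V_n(\di(\lambda K))=V(h_\lambda[n])$ by multilinearity, discarding all mixed volumes that contain $L_n[k]$ with $k\geq 2$ (they vanish by Theorem~\ref{mix_volumes_Schneider}, as $\dim L_n=1$). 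The monomial character of $V_n(\di(\lambda K))$ in $\lambda$ forced by (VC) and Corollary~\ref{mixedZero} then translates, order by order in $\lambda^m$ with $m\neq n$, into the family of identities
\[
V(L_0[n-k],f_{n-1}(K,\cdot)[k])=0,\qquad V(L_0[n-k-1],f_{n-1}(K,\cdot)[k],L_n)=0.
\]
I would combine these equalities with the even-ness of $f_{n-1}(K,\cdot)$ given by Lemma~\ref{even} and with the non-degeneracy conditions $\dim L_0=n-1$, $\dim L_n=1$, $\dim(L_0+L_n)=n$ supplied by Corollary~\ref{cor:dimfi}(i) to conclude $f_{n-1}(K,\cdot)\equiv 0$ for every $K$ with $\dim K=n$. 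A final application of Lemma~\ref{r: facts on f_js}(v) then closes the proof.

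The hardest step is this last extraction: while the cases $1\leq j\leq n-2$ reduce to a clean, uniform application of the machinery of Section~\ref{n-1}, the base case $j=n-1$ requires genuinely exploiting the full strength of the volume constraint together with the specific geometry of $L_0$ and $L_n$, since Lemma~\ref{r: facts on f_js}(vi) is unavailable (the top term $f_n(K,\cdot)$ being the support function of the nontrivial segment $V_n(K)L_n$, not a linear function).
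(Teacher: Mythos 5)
Your treatment of the range $1\leq j\leq n-2$ is correct and is in fact a cleaner route than the paper's: for fixed $K$ with $\dim K=j+1\leq n-1$ all higher-degree terms vanish for dimensional reasons (Lemma~\ref{r: facts on f_js}(iii), Lemma~\ref{513}(ii), and $\vol(K)=0$), so Lemma~\ref{r: facts on f_js}(vi), Corollary~\ref{cor:dimfi}(ii), Lemma~\ref{f_i=0 esencia}(ii), Lemma~\ref{even}, and Lemma~\ref{r: facts on f_js}(v) do indeed chain together as you describe. The paper instead runs a single backward induction (its Steps 1--3) that treats all $j$ at once.

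The case $j=n-1$, however, contains a genuine gap, and for $n=2$ this case is the entire lemma. The identities you propose to extract, namely $V(L_0[n-k],f_{n-1}(K,\cdot)[k])=0$ and $V(L_0[n-k-1],f_{n-1}(K,\cdot)[k],L_n)=0$, together with evenness and the nondegeneracy of $L_0$ and $L_n$, do \emph{not} force $f_{n-1}(K,\cdot)\equiv 0$ when $f_{n-1}(K,\cdot)$ is only known to be an even difference of support functions (Lemma~\ref{r: facts on f_js}(i)); note that Lemma~\ref{r: facts on f_js}(vi) is unavailable here, as you observe, so no convexity of $f_{n-1}(K,\cdot)$ is known. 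Concretely, $V(L_0[n-1],f)=\tfrac1n\int_{\sfe}f\,dS_{n-1}(L_0,\cdot)$, and since $\dim L_0=n-1$ the measure $S_{n-1}(L_0,\cdot)$ is concentrated on the two unit normals of $\spa L_0$, so this identity only constrains the values of $f$ at two antipodal points; the remaining identities are finitely many scalar (polynomial) constraints on the infinite-dimensional space of even differences of support functions and cannot pin $f$ down to zero. What is missing is precisely the content of Steps~1 and~2 of the paper's proof: one must exploit that $h(\di(\lambda K),\cdot)$ is a support function for all $\lambda$, together with the additivity of $h(L_n,\cdot)$ on each half-space $\{\pm\langle e_1,\cdot\rangle\geq 0\}$ and the oddness of $K\mapsto f_{n-1}(K,u)$ (Theorem~\ref{th_lemma}(i) via Lemma~\ref{513}(ii)), to show that $f_{n-1}(K,\cdot)$ vanishes on $e_1^\perp$ and equals $(-1)^{\varepsilon}\alpha_K\,h([-e_1,e_1],\cdot)$ for some $\alpha_K\geq0$. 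Only after this structural step does the single identity $V(L_0[n-1],f_{n-1}(K,\cdot))=0$, combined with $\dim(L_0+L_n)=n$, yield $\alpha_K=0$. Without an argument of this kind your ``final extraction'' does not go through.
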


\begin{proof}
Corollary~\ref{cor:dimfi}(i) together with Lemma~\ref{even} yields $\dim L_n=1$ and $L_n\in\K^n_s$. 
Let $\{e_1,\dots,e_n\}$ be a basis of $\R^n$
such that $L_n=[-e_1,e_1]$ and denote by $H$ the $(n-1)$-dimensional subspace orthogonal to $\spa\{e_1\}$.
We divide the proof in three steps. 

\medskip

\noindent{\bf Step 1.} \emph{Let $K\in\K^n$, $1\leq j\leq n-1$, $u',v'\in H$, and $u=ae_1+u'$ and $v=be_1+v'$ with $ab\geq 0$. 
Then 
$$f_j(K,u+v)=f_j(K,u)+f_j(K,v).$$ Moreover, $f_j(K,w)=0$ for every $w\in H$.}

We prove the claim by backward induction. First we prove it for $j=n-1$.
For simplicity, we write $f(K,u)$ instead of $f_{n-1}(K,u)$. For every $a\in\R^n$,
\begin{equation}\label{first_l}
h(L_n,ae_1+u')=|\langle e_1,ae_1+u'\rangle|=|a|\|e_1\|=h(L_n,ae_1)+h(L_n,u').
\end{equation}
Since $\di(K)\in\K^n$ for every convex body $K$, for a fixed $\lambda>0$ we can write, using \eqref{h-decomp} and \eqref{first_l},
\begin{align*}
0&\geq h(\di (\lambda K),ae_1+v')-h(\di(\lambda K),ae_1)-h(\di(\lambda K),v')
\\&=\lambda^n(h(L_n,ae_1+v')-h(L_n,ae_1)-h(L_n,v'))
\\&\quad+\lambda^{n-1}(f(K,ae_1+v')-f(K,ae_1)-f(K,v'))+O(\lambda^{n-2})
\\&=\lambda^{n-1}(f(K,ae_1+v')-f(K,ae_1)-f(K,v'))+O(\lambda^{n-2}).
\end{align*}
Thus, as $\lambda\to\infty$, we obtain  
\begin{equation}\label{ineq1}
f(K,ae_1+v')\leq f(K,ae_1)+f(K,v')
\end{equation}
for every convex body $K\in\K^n$. 
Since $K\mapsto f(K,u)$ is a continuous, translation invariant, and $(n-1)$-homogeneous real-valued valuation, and, by Lemma~\ref{513}(ii), it vanishes when restricted to 
$(n-1)$-dimensional convex bodies, Lemma~\ref{th_lemma}(i) yields
\begin{equation}\label{impar}
f(K,u)+f(-K,u)=0,\quad\forall K\in\K^n,\,u\in\R^n.
\end{equation}
Combining this fact with \eqref{ineq1}, in which $K$ is replaced by $-K$, we obtain
$$f(K,a e_1+v')=f(K,a e_1)+f(K,v') \text{ for every } K\in\K^n, a\in\R, v'\in H.$$

From the fact that $h(L_n,u')=0$ we also obtain
$$h(L_n,u+v)=h(L_n,(a+b)e_1)=|a+b|\|e_1\|$$
and 
$$h(L_n,u)+h(L_n,v)=h(L_n,ae_1)+h(L_n,be_1)=(|a|+|b|)\|e_1\|,$$
which yields, for $a,b\in\R$ with the same sign, 
\begin{equation}\label{second_l}
h(L_n,u+v)=h(L_n,u)+h(L_n,v).
\end{equation}
Using, as above, that $\di(\lambda K)\in\K^n$ for every $K\in\K^n$ and $\lambda>0$ together with \eqref{second_l} and \eqref{impar},  
we obtain 
$$f(K,u+v)=f(K,u)+f(K,v).$$
If we apply this equality to $u=e_1+w$ and $v=e_1-w$, $w\in H$, using \eqref{first_l} and that, by Lemma \ref{even}, $f$ is even, 
we have
\begin{align*}
2f(K,e_1)&=f(K,u+v)
\\&=f(K,u)+f(K,v)
\\&=2f(K,e_1)+f(K,w)+f(K,-w)
\\&=2(f(K,e_1)+f(K,w)),
\end{align*}
which implies $f(K,w)=0$ for every convex body $K\in\K^n$ and $w\in H$. 
Hence, we have proved Step~1 for $j=n-1$.

In order to proceed with the (backward) induction, we assume that the claim holds for $j> j_0$ and  prove it for $j=j_0$. By the induction hypothesis, and the McMullen decomposition in \eqref{h-decomp}, we can argue for $f_{j_0}$ as we have just done with $f$ to prove the statement. 

\medskip
\noindent{\bf Step 2.} \emph{For every $K\in\K^n$ and $1\leq j\leq n-1$, either the function $u\mapsto f_j(K,u)$ or $u\mapsto f_j(-K,u)$ is a support function and 
\begin{equation}\label{aggiunta1}
f_j(K,u)=(-1)^{\varepsilon_j(K)}\alpha_j(K)h([-e_1,e_1],u)=(-1)^{\varepsilon_j(K)}\alpha_j(K)|\langle e_1,u\rangle|
\end{equation}
where $\varepsilon_j(K)\in\{0,1\}$ and $\alpha_j(K)\geq 0$.}

Let $u=ae_1+u',v=be_1+v'\in\R^n$ with $a,b\in\R$ and $u',v'\in H$. Step~1 and the evenness of $u\mapsto f_j(K,u)$ yield
\[
\begin{split}
\quad f_j(K,u+v)&=f_j(K,(a+b)e_1)+f_j(K,u'+v')\\&=|a+b|f_j(K,\sign(a+b)e_1)=|a+b|f_j(K,e_1) \quad\text{and}
\\ f_j(K,u)+&f_j(K,v)=(|a|+|b|)f_j(K,e_1).
\end{split}
\]
Let $K\in\K^n$ be such that $f_j(K,e_1)\geq 0$. As a consequence of the previous equalities,
$$
f_j(K,u+v)\leq f_j(K,u)+f_j(K,v),\quad\forall\, u,v\in\R^n.
$$
This means that $u\mapsto f_j(K,u)$ is a support function. If $f_j(K,e_1)<0$, we can use 
Lemmas~\ref{513}(ii) and \ref{th_lemma}(i) to obtain that 
$f_j(-K,e_1)> 0$. Now, applying the 
previous argument to $-K$, we get that $u\mapsto f_j(-K,u)$ is a support function.

Let $1\leq j\leq n-1$ and let $K\in\K^n$ be such that $u\mapsto f_j(K,u)$ is a support function. Let $\di_j(K)\in\K^n$ be such that $f_j(K,\cdot)=h(\di_j(K),\cdot)$. From Step~1, $\di_j(K)$ lies on the line orthogonal to $H$ passing through the origin. Since $\di_j(K)\in\K^n_s$ is a centered convex body, it is a centered segment on the line 
spanned by $e_1$. Thus, there exists $\alpha_j(K)\geq 0$ (depending on $K$ and $j$) such that $\di_j(K)=\alpha_j(K)[-e_1,e_1]$.
Using $f_j(K,u)=-f_j(-K,u)$, we get \eqref{aggiunta1}. 

\medskip
\noindent{\bf Step 3.} \emph{For every $K\in\K^n$ and $j\in\{1,\dots, n-1\}$, $f_j(K,\cdot)\equiv 0$.}

We prove it by induction on $j$. 
Let $j=1$. Let $K\in\K^n$ be so that $\dim K\geq 1$ and $f_1(K,\cdot)$ is a support function.  By Corollary~\ref{mixedZero}(i), the mixed volume $V(L_0[n-1],f_1(K,\cdot))$ vanishes since $V(L_0[n-1],f_1(K,\cdot))=v_1^{\di}(K)$, that is, $V(L_0[n-1],f_1(K,\cdot))$ is the coefficient of the 1-homogeneous term of the polynomial in \eqref{vi di}. On the other hand,
$$
V(L_0[n-1],f_1(K,\cdot))=(-1)^{\varepsilon(K)}\alpha_1(K)V(L_0[n-1],[-e_1,e_1]),
$$
which, by Corollary~\ref{cor:dimfi}(i), vanishes if and only if $\alpha_1(K)=0$. If $\dim K=0$, then $f_1(K,u)=0$, for every $u\in\R^n$, by Lemma~\ref{r: facts on f_js}(iii). Hence, we have $f_1(K,\cdot)\equiv 0$ for every $K\in\K^n$.

Assume that $f_j(K,\cdot)$ vanishes for every $j<j_0\le n-1$ and for every $K\in\K^n$. Let $K\in\K^n$ have $\dim K\geq j_0$ and consider 
$v_{j_0}^{\di}(K)$, i.e., the coefficient of degree $j_0$ in the polynomial expansion \eqref{vi di}. 
Remark~\ref{f_i in v_j} yields that the only possible entries of each mixed volume summand of $v_{j_0}^{\di}(K)$ are $f_j(K,\cdot)$ with $0\leq j\leq j_0$. Therefore, 
by the induction hypothesis, $v_{j_0}^{\di}(K)$ is given only by the summand $V(L_0[n-1],f_{j_0}(K,\cdot))$. As $j_0<n$, by Corollary~\ref{mixedZero}(i), 
$$
v_{j_0}^{\di}(K)=V(L_0[n-1],f_{j_0}(K,\cdot))=0.
$$
The latter is true if and only if $\alpha_{j_0}(K)=0$, by a similar argument as for $j=1$. Hence, the statement of Step~3 and so, also Lemma~\ref{l: fi(K,u)=0} are proved. 
\end{proof}

\section{Proof of Theorems \ref{teo} and \ref{cor}}\label{1}
We start with the following theorem in which we give an explicit expression for the image of an operator $\di\in\MVal$ satisfying (VC) and such that $\dim(\di(\{0\}))=0$. 

\begin{theorem}\label{di_1Z=n}Let $n\geq 2$. An operator $\di\in\MVal$ satisfies (VC) and $\dim(\di(\{0\}))=0$ if and only if 
$$
\di(K)=p+\di_1(K)+p_2(K)+\dots+p_{n-1}(K)+\vol(K)q,\quad\forall K\in\K^n,
$$
with $p,q\in\R^n$, $p_j:\K^n\longrightarrow\R^n$, $2\leq j\leq n-1$, continuous, translation invariant, and $j$-homogeneous valuations, and 
$\di_1\in\MVal_1$ satisfying (VC). 
\end{theorem}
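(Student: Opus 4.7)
The plan is to exploit the McMullen decomposition~\eqref{h-decomp} of $\di$ and upgrade the pointwise information supplied by Corollary~\ref{cor:dimfi}(iii) to linearity of each intermediate $f_j(K,\cdot)$. The hypothesis $\dim(\di(\{0\}))=0$ forces $L_0=\{p\}$, and Corollary~\ref{cor:dimfi}(iii) then gives $L_n=\{q\}$ together with $\dim(\di_j(Z))=0$ for every zonoid $Z$ and every $j\in\{2,\dots,n-1\}$. The heart of the argument is a backward induction on $j_0\in\{2,\dots,n-1\}$ showing that $u\mapsto f_{j_0}(K,u)$ is linear for every $K\in\K^n$; granted this, one writes $f_{j_0}(K,u)=\langle p_{j_0}(K),u\rangle$ for a vector-valued valuation $p_{j_0}$ and then recovers $f_1$ by subtraction.

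For the inductive step, I would assume $f_j(K,\cdot)$ is linear for every $K$ and every $j>j_0$ (the base case $j_0=n-1$ is immediate since $f_n(K,u)=\vol(K)\langle q,u\rangle$). Lemma~\ref{r: facts on f_js}(vi) then promotes $f_{j_0}(K,\cdot)$ to a support function for every $K$, so there is a convex body $\di_{j_0}(K)$ with $h(\di_{j_0}(K),\cdot)=f_{j_0}(K,\cdot)$ and $\di_{j_0}\in\MVal_{j_0}$. Lemma~\ref{f_i=0 esencia}(i), applied with the zonotope hypothesis inherited from Corollary~\ref{cor:dimfi}(iii), gives $\dim\di_{j_0}(K)=0$ for $\dim K=j_0$, and Lemma~\ref{f_i=0 esencia}(ii) extends this to $\dim K=j_0+1$. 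To reach arbitrary $K$, the key step is to consider the width functional
\[
\omega_u(K):=h(\di_{j_0}(K),u)+h(\di_{j_0}(K),-u),
\]
which lies in $\Val_{j_0}$ for each fixed $u$ and vanishes on every $K$ with $\dim K\le j_0+1$ by the two previous facts together with Lemma~\ref{r: facts on f_js}(iii). Theorem~\ref{th_lemma}(ii) then forces $\omega_u\equiv 0$ for every $u$, so $\di_{j_0}(K)$ has zero width in all directions and is therefore a point; hence $f_{j_0}(K,u)=\langle p_{j_0}(K),u\rangle$. The map $p_{j_0}:\K^n\to\R^n$ inherits continuity, translation invariance, $j_0$-homogeneity and the valuation property from $f_{j_0}(\cdot,u)$ by letting $u$ range over a basis of $\R^n$.

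Once the induction terminates at $j_0=2$, subtracting the identified linear terms from the McMullen decomposition gives
\[
f_1(K,u)=h(\di(K),u)-\Big\langle p+\sum_{j=2}^{n-1}p_j(K)+\vol(K)q,\,u\Big\rangle,
\]
which is $h(\di(K),\cdot)$ minus a linear function and hence the support function of a translate of $\di(K)$; this defines $\di_1\in\MVal_1$. Since translations preserve volume, $\vol(\di_1(K))=\vol(\di(K))$, so (VC) for $\di$ transfers to $\di_1$. The converse direction is a routine verification in the same spirit: each vector summand only translates $\di_1(K)$, so (VC) and the Minkowski valuation property are inherited by $\di$, and the homogeneity of $p_j$ (with $j\ge 2$) and of $\di_1$ combined with translation invariance forces $p_j(\{0\})=0$ and $\di_1(\{0\})=\{0\}$, so $\di(\{0\})=\{p\}$ has dimension $0$.

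The main obstacle is the passage from ``point on zonoids and low-dimensional bodies'' to ``point on every $K\in\K^n$''. Lemmas~\ref{f_i=0 esencia}(i)--(ii) only cover $\dim K\le j_0+1$, so the crucial trick is to recognize the width functional $\omega_u$ as an element of $\Val_{j_0}$ and apply Theorem~\ref{th_lemma}(ii) to upgrade its vanishing on $(j_0+1)$-dimensional bodies to identical vanishing on all of $\K^n$, which is what closes the final gap in the induction.
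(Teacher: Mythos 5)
Your proof is correct and follows essentially the same route as the paper: backward induction through the McMullen decomposition, starting from Corollary~\ref{cor:dimfi}(iii) on zonoids, upgrading via Lemma~\ref{r: facts on f_js}(vi) and Lemma~\ref{f_i=0 esencia}, and closing the gap with Theorem~\ref{th_lemma}(ii). The only (harmless) cosmetic difference is that you apply Theorem~\ref{th_lemma}(ii) to the width functional $\omega_u$ rather than to the additivity defect $K\mapsto f_{j_0}(K,u+v)-f_{j_0}(K,u)-f_{j_0}(K,v)$; both are valid once $f_{j_0}(K,\cdot)$ is known to be a support function.
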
  

\begin{proof}
Assume first that $\di\in\MVal$ satisfies the hypotheses of the statement. 
We show that $f_j(K,\cdot)$ is a linear function for every $K\in\K^n$ and $2\leq j\leq n$, by backward induction on $j$. 

By Corollary~\ref{cor:dimfi}(iii), if $K$ is a zonoid, then $f_j(K,\cdot)$ is a linear function for every $2\leq j\leq n$. Thus, Lemma~\ref{f_i=0 esencia}(i), ensures that if $K$ is a convex body with $\dim K=j$, $2\leq j\leq n-1$, then $f_j(K,\cdot)$ is linear. Furthermore, since 
$h(L_n,\cdot)$ is a linear function, Lemma~\ref{r: facts on f_js}(vi) and Lemma~\ref{f_i=0 esencia}(ii) yield $\dim(\di_{n-1}(K))=0$ for every $K\in\K^n$ of dimension $n$, that is, $f_{n-1}(K,\cdot)$ in the McMullen decomposition of $\di(K)$ is also a linear function.
We now proceed with a backward induction argument. Let us assume that $f_{j}(K,\cdot)$ is a linear function for every $j_0< j\leq n-1$ and $K\in\K^n$. By Lemma~\ref{r: facts on f_js}(vi) and Lemma~\ref{f_i=0 esencia}(ii),  
$f_{j_0}(K,\cdot)$ is linear for every $K\in \K^n$ with $\dim K=j+1$. Theorem~\ref{th_lemma} yields that $f_j(K,\cdot)$ is linear for every $K\in\K^n$, $2\leq j\leq n-1$.  (Notice that in general $f_1(K,\cdot)$ is not linear since it is not linear for zonotopes.)

Now, again by Lemma~\ref{r: facts on f_js}(vi), we have that $u\mapsto f_1(K,u)$ is a support function for every $K\in\K^n$. We denote by $\di_1(K)$ the convex body such that $h(\di_1(K),\cdot)=f_1(K,\cdot)$. The map $K\mapsto\di_1(K)$ is a continuous and translation invariant Minkowski valuation that satisfies $\vol(\di(K))=\vol(\di_1(K))$. Therefore, $\di_1\in\MVal_1$ satisfies~(VC). 

The converse is clear. 
\end{proof}

Next we give the explicit expression for the image of an operator $\di\in\MVal$ satisfying (VC) and such that $\dim(\di(\{0\}))=n-1$. 

\begin{theorem}\label{di_1Z=0_noOSym}Let $n\geq 2$. An operator $\di\in\MVal$ satisfies (VC) and $\dim(\di(\{0\}))=n-1$ 
if and only if there exist $L\in \K^n$ with $\dim L=n-1$ and 
a segment $S$ with $\dim (L+S)=n$ such that
\begin{equation}\label{di_K}
\di(K)=L+p_1(K)+\dots+p_{n-1}(K)+\vol(K)S,\quad\forall K\in\K^n,
\end{equation}
where $p_j:\K^n\longrightarrow\R^n$ is a continuous, translation invariant valuation, homogeneous of degree $j$, $1\leq j\leq n-1$. 
\end{theorem}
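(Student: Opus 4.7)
The forward direction is a direct verification: the sum of translation invariant valuations is a translation invariant valuation, so $\di \in \MVal$; the terms $L$ and $p_j(K)$ are translations, so $\vol(\di(K)) = \vol(L + \vol(K)S) = \vol(K)\vol(L+S)$ by the computation at the end of Section~\ref{sec:volume_constraints}, giving (VC); and $p_j(\{0\}) = 0$ by $j$-homogeneity with $j\geq 1$ forces $\di(\{0\}) = L$, hence $\dim(\di(\{0\})) = n-1$. For the converse, by Remark~\ref{image of a point} set $L := L_0 = \di(\{0\})$ and $S := L_n$; Corollary~\ref{cor:dimfi}(i) gives $\dim S = 1$ and $\dim(L+S) = n$. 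Comparing with the McMullen decomposition~\eqref{h-decomp}, the theorem reduces to showing that $u \mapsto f_j(K,u)$ is $\R$-linear for every $K\in\K^n$ and every $1 \leq j \leq n-1$: one then sets $p_j(K) \in \R^n$ as the representing vector, and $p_j$ inherits continuity, translation invariance, the valuation property, and $j$-homogeneity from $f_j(\,\cdot\,,u)$ tested on a basis.

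The core idea is to \emph{symmetrize} in order to invoke Lemma~\ref{l: fi(K,u)=0}. Set $\di^s(K) := \di(K) + (-\di(K))$; a direct check shows $\di^s\in\MVal^s$ and, since $\di^s(K)$ is the difference body of $\di(K)$, Rogers--Shephard together with (VC) for $\di$ gives (VC) for $\di^s$. Moreover $\di^s(\{0\}) = DL$ has dimension $n-1$, so Lemma~\ref{l: fi(K,u)=0} applies to $\di^s$ and yields $f_j^{\di^s}(K,u) = 0$ for $1 \leq j \leq n-1$. Since $h(\di^s(K),u) = h(\di(K),u) + h(\di(K),-u)$, comparison of McMullen decompositions gives $f_j^{\di^s}(K,u) = f_j(K,u) + f_j(K,-u)$, so $u\mapsto f_j(K,u)$ is odd for every $K$ and every $1\leq j\leq n-1$.

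The main obstacle is that oddness alone does not imply linearity: for a triangle $T$, the function $u \mapsto h(T,u) - h(T,-u)$ is odd but manifestly not linear. The remedy is to combine oddness with the additivity argument of Step~1 in the proof of Lemma~\ref{l: fi(K,u)=0}, by backward induction on $j$. Fix $e_1$ parallel to $S$ and let $H := e_1^\perp$; assume inductively that $f_i(K,\cdot)$ is linear for $j < i \leq n-1$. Expanding the convexity inequality for $\di(\lambda K)$ via~\eqref{h-decomp}, all $\lambda^i$-coefficients with $j<i<n$ vanish by the inductive hypothesis, and for $u = ae_1 + u'$, $v = be_1 + v'$ with $ab \geq 0$ and $u',v' \in H$ the identity $|a+b| = |a| + |b|$ also kills the $\lambda^n$-coefficient $\vol(K)(h(S,u+v) - h(S,u) - h(S,v))$. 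Letting $\lambda \to \infty$ gives $f_j(K,u+v) \leq f_j(K,u) + f_j(K,v)$; applying the same to $(-u,-v)$ and using oddness produces the reverse inequality, hence equality. Specialising to $u' = v' = 0$, to $a = b = 0$, and then to $b=0$, and using positive $1$-homogeneity together with oddness to convert Cauchy additivity into $\R$-linearity, one gets $f_j(K,\cdot)$ linear on $\spa\{e_1\}$, on $H$, and finally on the direct sum $\R^n = \spa\{e_1\} \oplus H$.
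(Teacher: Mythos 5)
Your proposal is correct, and its skeleton coincides with the paper's: symmetrize via $K\mapsto D(\di(K))$, apply the symmetric classification (Lemma~\ref{l: fi(K,u)=0}, which the paper packages as Theorem~\ref{di_1Z=0}) to deduce $f_j(K,u)+f_j(K,-u)=0$ for $1\le j\le n-1$ (the paper's \eqref{f_i=0}), and then prove linearity of $f_j(K,\cdot)$ by backward induction using the convexity of $h(\di(\lambda K),\cdot)$ and the vanishing of the higher-order coefficients. Where you genuinely diverge is in how the superadditivity (the reverse of the inequality obtained by letting $\lambda\to\infty$) is produced. The paper goes through its Claim~1: it takes a $(j+1)$-dimensional simplex $T$ and the polytope $P$ of Lemma~\ref{standard fact ii} with $T\cup P$ a zonotope, uses Corollary~\ref{cor:dimfi}(ii) and Lemma~\ref{513} to show $f_j(T,\cdot)+f_j(P,\cdot)$ is linear, adds the two subadditivity inequalities to force equality on simplices, and then extends to all bodies via Lemma~\ref{standard facts}. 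You instead observe that the admissible cone $\{(u,v)=(ae_1+u',be_1+v'):ab\ge0\}$ is symmetric under $(u,v)\mapsto(-u,-v)$, so subadditivity applied to $(-u,-v)$ combined with oddness of $f_j(K,\cdot)$ in $u$ immediately yields the reverse inequality for every $K$ at once. This is a genuine and legitimate simplification: it bypasses Claim~1, Lemma~\ref{standard fact ii}, and the simplex-to-general-body extension entirely, exploiting the fact that in this (non-symmetric-image) case the $f_j(K,\cdot)$ are odd in $u$ rather than even (in the symmetric case of Lemma~\ref{l: fi(K,u)=0} the paper must instead flip $K$ via Theorem~\ref{th_lemma}(i), which is why your trick is not available there). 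The remaining steps --- additivity on $\spa\{e_1\}$ and on $H$ separately, then on their direct sum, and the passage from positively homogeneous odd additive functions to $\R$-linear ones --- are sound and match the paper's concluding computation.
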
  

In order to prove Theorem \ref{di_1Z=0_noOSym}, we will first prove its symmetric version, namely, the following result.

\begin{theorem}\label{di_1Z=0}Let $n\geq 2$. An operator $\di\in\MVal^s$ satisfies (VC) and $\dim(\di(\{0\}))=n-1$ if and only if there exist $L\in \K^n_s$ with $\dim L=n-1$ and a 
centered segment $S$ with $\dim(L+S)=n$ such that
$$
\di(K)=L+\vol(K)S,\quad\forall K\in\K^n.
$$
\end{theorem}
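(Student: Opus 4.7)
The plan is to extract this result directly from the McMullen decomposition together with the structural lemmas of Sections~\ref{sec: dependence a point} and \ref{n-1}; there is almost no work left to do once those are in place.

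For the forward direction, I would start with the McMullen decomposition~\eqref{h-decomp} of $\di\in\MVal^s$,
\[
h(\di(K),u)=h(L_0,u)+\sum_{j=1}^{n-1}f_j(K,u)+\vol(K)h(L_n,u).
\]
Using the hypothesis $\dim L_0=\dim(\di(\{0\}))=n-1$, Corollary~\ref{cor:dimfi}(i) gives $\dim L_n=1$ and $\dim(L_0+L_n)=n$. Since $\di$ has $o$-symmetric images, both $L_0=\di(\{0\})$ and, via the McMullen uniqueness, $L_n$ must be $o$-symmetric; in particular $L_n$ is a centered segment $S$ and $L:=L_0$ is an $o$-symmetric $(n-1)$-dimensional convex body. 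Then Lemma~\ref{l: fi(K,u)=0} annihilates every middle term: $f_j(K,u)\equiv 0$ for $1\le j\le n-1$. Hence
\[
h(\di(K),u)=h(L,u)+\vol(K)h(S,u)=h\bigl(L+\vol(K)S,\,u\bigr)
\]
for every $u\in\R^n$, so $\di(K)=L+\vol(K)S$ by the injectivity of the support function.

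For the converse direction, I would verify each required property of the candidate operator $\di(K):=L+\vol(K)S$ directly. Continuity and translation invariance follow from the corresponding properties of $\vol$; the valuation property reduces to the valuation property of $\vol$, since $L$ is fixed and $\vol(K)+\vol(M)=\vol(K\cup M)+\vol(K\cap M)$ when $K\cup M\in\K^n$. Symmetry of the image is clear because $L$ and $\vol(K)S$ are both $o$-symmetric. The condition $\dim(\di(\{0\}))=\dim L=n-1$ is immediate. Finally, the computation already recorded in Section~\ref{sec:volume_constraints},
\[
\vol\bigl(L+\vol(K)S\bigr)=\vol(K)\,V(L[n-1],S)=\vol(K)\,\vol(L+S),
\]
together with $\vol(L+S)>0$ (which holds because $\dim(L+S)=n$), gives (VC) with $c_\di=C_\di=\vol(L+S)$.

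There is no real obstacle: the only genuinely non-trivial input is Lemma~\ref{l: fi(K,u)=0}, which was proved precisely to handle this situation. The only points requiring a word of care are the identification of $L_n$ with a centered segment (which uses Lemma~\ref{even} applied to $f_n$, or equivalently the observation that $L_n$ is the $o$-symmetric $1$-dimensional body whose support function is $f_n(\kappa_n^{-1/n}B_n,\cdot)$) and the use of multilinearity of mixed volumes to perform the volume computation above.
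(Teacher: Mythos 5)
Your proposal is correct and follows essentially the same route as the paper: the McMullen decomposition \eqref{h-decomp}, Corollary~\ref{cor:dimfi}(i) for the dimensions of $L_0$ and $L_n$, Lemma~\ref{l: fi(K,u)=0} to annihilate the intermediate terms, and Lemma~\ref{even} (already invoked inside the proof of Lemma~\ref{l: fi(K,u)=0}) for the $o$-symmetry of $L_0$ and $L_n$; the converse is the same direct verification via the mixed-volume computation of Section~\ref{sec:volume_constraints}. No gaps.
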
  

\begin{proof}
By Lemma~\ref{l: fi(K,u)=0} and the McMullen decomposition \eqref{h-decomp}, there exist
$L_0, L_n\in\K^n$ such that
$$
\di(K)=L_0+V_n(K)L_n,\quad\forall\, K\in\K^n.
$$
On the other hand, by Corollary~\ref{cor:dimfi}(i) we have that $\dim L_0=n-1$, 
$\dim L_n=1$, and $\dim(L_0+L_n)=n$.

The converse clearly holds since $K\mapsto L+\vol(K)S$, with $L,S$ as assumed, satisfies all the conditions (cf. Section~\ref{sec:volume_constraints}).
\end{proof}

Now we proceed with the proof of Theorem~\ref{di_1Z=0_noOSym}.

\begin{proof}[Proof of Theorem~\ref{di_1Z=0_noOSym}] Let $\di\in\MVal$ satisfy (VC) and $\dim(\di(\{0\}))=n-1$. Define the operator $\dib:\K^n\longrightarrow\K^n_s$ by
$$
\dib(K):=D(\di(K)).
$$
It is clear that $\dib$ is a continuous and translation invariant Minkowski valuation which satisfies (VC) as a consequence of (RS) and the assumption that $\di$ satisfies (VC). Moreover, the image of $K$ under $\dib$ is an $o$-symmetric convex body, since the 
difference body operator has this property. Notice also that $\dim(\dib(\{0\}))=\dim(D(\di(\{0\})))=\dim(\di(\{0\}))=n-1$, since the difference body operator preserves the dimension of any convex body.
Hence, we can apply Theorem~\ref{di_1Z=0} to $\dib$ and obtain the existence of an $(n-1)$-dimensional $o$-symmetric convex
body $L$ and a centered segment $S$ such that
\begin{equation}\label{dii_L_S}
\dib(K)=L+\vol(K)S,\quad\forall K\in\K^n.
\end{equation}
If we write it in terms of the support function, and for a $\lambda>0$, we have
\begin{equation}\label{sup_psi}
h(\dib(\lambda K),u)=h(L,u)+\lambda^n\vol(K)h(S,u).
\end{equation}

The support function of $\dib(K)$ can be also written in terms of the support function of $\di(K)$. By the homogeneity of each summand in the McMullen 
decomposition of $\di$ in \eqref{h-decomp}, we have 
\begin{align}\label{sup_psi2}\nonumber
h(\dib(\lambda K),u)&=h(\di(\lambda K),u)+h(\di(\lambda K),-u)\\&=\,h(L_0,u)+h(L_0,-u)+\lambda(f_1(K,u)+f_1(K,-u))+\dots+\nonumber\\
&\,\,\,\,\,\,+\lambda^{n-1}(f_{n-1}(K,u)+f_{n-1}(K,-u))+\lambda^n\vol(K)(h(L_n,u)+h(L_n,-u)),
\end{align}
for every $u\in\R^n$. Comparing the coefficients of the polynomials in \eqref{sup_psi} and \eqref{sup_psi2}, we obtain that $L_n$ is a segment in the same 
direction as $S$ and that $L_0$ is an $(n-1)$-dimensional convex body lying in a parallel hyperplane to $\spa L$. Moreover,  
\begin{equation}\label{f_i=0}
f_j(K,u)+f_j(K,-u)=0,\quad\forall\, 1\leq j\leq n-1,\,u\in\R^n,\,K\in\K^n.
\end{equation}

Our aim is to show 
\begin{equation}\label{linear_K}
f_j(K,u+v)=f_j(K,u)+f_j(K,v),\quad \forall\, 1\leq j\leq n-1, u,v\in\R^n, K\in\K^n.
\end{equation}
Once it is proved, we have that $\di(K)$ is given as in~\eqref{di_K}, since the functions $u\mapsto f_j(K,u)$ are linear functions, i.e., $f_j(K,u)=h(\{p_j(K)\},u)$, as we want to show.

To prove \eqref{linear_K}, we use the following two claims.

\medskip

\noindent{\bf Claim 1.} \emph{Let $1\leq j\leq n-1$. Let $T$ be a $(j+1)$-dimensional simplex and let $P$ be a polytope given by Lemma~\ref{standard fact ii}. Then 
$u\mapsto f_{j}(T,u)+f_{j}(P,u)$ is a linear function in $\R^n$.}

By Lemma~\ref{standard fact ii}, $T\cup P$ is a zonotope. Thus, by Corollary~\ref{cor:dimfi}(ii),  $\dim(\di_j(T\cup P))=0$, that is, 
$u\mapsto f_j(T\cup P,u)=h(\di_j(T\cup P),u)$ is a linear function. By Lemma~\ref{r: facts on f_js}(iv), $f_j(T\cap P,\cdot)$ is a support function. Now Lemma~\ref{513} yields $\dim(\di_j(T\cap P))=0$. Hence, as $K\mapsto f_j(K,u)$ is a valuation for every $u\in\R^n$,
$$
f_j(T,u)+f_j(P,u)=h(\di_j(T\cup P),u)+h(\di_j(T\cap P),u)=\langle q_{_{T,P}},u\rangle, 
$$
for some $q_{_{T,P}}\in\R^n$. In other words, $u\mapsto f_j(T,u)+f_j(P,u)$ is a linear function.

\medskip

\noindent{\bf Claim 2.} 
\emph{Let $\{e_1,\dots,e_n\}$ be a basis of $\R^n$ such that $S=[-e_1,e_1]$ and denote by $H$ the hyperplane orthogonal to $S$. 
Then, for every $K\in\K^n$, $u',v'\in H$, and $1\leq j\leq n-1$,
\begin{equation}\label{eq_K}
f_j(K,u'+v')=f_j(K,u')+f_j(K,v')
\end{equation}
and 
\begin{equation}\label{eq_K_a}
f_j(K,ae_1+u')=f_j(K,ae_1)+f_j(K,u'), \quad\forall\,a\in\R.
\end{equation}}

We prove \eqref{eq_K} by backward induction on $j$.  Assume first $j=n-1$. 
We argue as in Step~1 of Lemma~\ref{l: fi(K,u)=0}. Since $\di(K)\in\K^n$ for every convex body $K$, for $\lambda>0$, we have 
\begin{align*}
0&\geq h(\di (\lambda K),u'+v')-h(\di(\lambda K),u')-h(\di(\lambda K),v')
\\&=\lambda^{n-1}(f_{n-1}(K,u'+v')-f_{n-1}(K,u')-f_{n-1}(K,v'))+O(\lambda^{n-2}).
\end{align*}
As $\lambda\to\infty$, we obtain 
\begin{equation}\label{ineq_n-1}
f_{n-1}(K,u'+v')\leq f_{n-1}(K,u')+f_{n-1}(K,v'),\quad \forall\, K\in\K^n,u',v'\in H.
\end{equation}
In order to obtain equality in \eqref{ineq_n-1}, we first apply \eqref{ineq_n-1} to a simplex $T$ and a polytope $P$ satisfying the condition of the previous claim with $j=n-1$ and add both expressions, to obtain 
\begin{equation}\label{ineq_eq}f_{n-1}(T,u'+v')+f_{n-1}(P,u'+v')\leq f_{n-1}(T,u')+f_{n-1}(T,v')+ f_{n-1}(P,u')+f_{n-1}(P,v').
\end{equation}
Now Claim~1 yields that both sides of the above inequality are the same linear function. Hence, we have equality in~\eqref{ineq_eq}, which together with inequality~\eqref{ineq_n-1} 
yields 
$$f_{n-1}(T,u'+v')= f_{n-1}(T,u')+f_{n-1}(T,v')$$ 
for every $(n-1)$-dimensional simplex $T$. As $K\mapsto f_{n-1}(K,u'+v')-f_{n-1}(K,u')-f_{n-1}(K,v')$ is a continuous and  translation invariant real-valued valuation for every $u'\in H$, Lemma~\ref{standard facts}(i) yields \eqref{eq_K} for $j=n-1$.

Assuming next that \eqref{eq_K} holds for every $j> j_0$, we show it for $j=j_0$. In this case, we obtain, similarly to the previous case, 
$f_{j_0}(K,u'+v')\leq f_{j_0}(K,u')+f_{j_0}(K,v')$ for every $K\in\K^n$ and $u',v'\in H$. Hence, applying again Claim~1, now for $j=j_0$, and Lemma~\ref{standard facts}(i), we get \eqref{eq_K} for every $1\leq j\leq n-1$. 

The proof of \eqref{eq_K_a} follows, similarly, by a backward induction argument on $j$. Indeed, since we have proven that $L_n$ is a segment in the same direction as $S$ (see~\eqref{dii_L_S}), we have $h(L_n,ae_1+v)=h(L_n,ae_1)+h(L_n,v)$. Since $\di(K)$ is a convex body, 
$$f_{n-1}(K,ae_1+u')\leq f_{n-1}(K,ae_1)+f_{n-1}(K,u')$$ for every $K\in\K^n$, $u'\in H$ and $a\in\R$ (arguing as we did for \eqref{ineq_n-1}). Now, exactly in the same way we performed the proof of \eqref{eq_K}, Claim~1 and Lemma~\ref{standard facts}(i) ensure \eqref{eq_K_a}. Thus, Claim~2 is proved.

\medskip
Now we proceed to prove \eqref{linear_K}. 
Let $u=ae_1+u'$ and $v=be_1+v'$, $a,b\in\R$, $u',v'\in H$. We compute, by using \eqref{eq_K_a},
$$
f_j(K,u+v)=f_j(K,(a+b)e_1+u'+v')=|a+b|f_j(K,\mathrm{sgn}(a+b)e_1)+f_j(K,u'+v')
$$
and 
$$
f_j(K,u)+f_j(K,v)=|a|f_j(K,\mathrm{sgn}(a)e_1)+f_j(K,u')+|b|f_j(K,\mathrm{sgn}(b)e_1)+f_j(K,v').
$$
Assume that $a+b>0$, $a>0$, and $b<0$. By using the above equations,  \eqref{eq_K}, and \eqref{f_i=0}, we get
\begin{align*}
f_j(K,u+v)&=(a-|b|)f_j(K,e_1)+f_j(K,u'+v')
\\&=af_j(K,e_1)-|b|f_j(K,e_1)+f_j(K,u')+f_j(K,v')
\\&=af_j(K,e_1)+f_j(K,u')+|b|f_j(K,-e_1)+f_j(K,v')
\\&=f_j(K,u)+f_j(K,v),
\end{align*}
for every $1\leq j\leq n-1$ and $K\in\K^n$. The equality for the remaining cases (different signs of $a,b$) is obtained in a similar way.
Hence, \eqref{linear_K} is proved. 

The converse is clear, as for any $K\in\K^n$, $K\mapsto L+p_1(K)+\dots+p_{n-1}(K)+\vol(K)S$, satisfies all the stated conditions (cf. Section~\ref{sec:volume_constraints}).
\end{proof}

\begin{proof}[Proof of Theorems \ref{teo} and \ref{cor}]
First we note that Theorem~\ref{cor} follows from Theorem~\ref{teo} and the assumption that $\di$ is an $o$-symmetrization, since the only point which is $o$-symmetric is the origin.
The proof of Theorem~\ref{teo} follows from Theorem~\ref{L0Ln}, together with Theorem~\ref{di_1Z=n}, for the case (i), and Theorem~\ref{di_1Z=0_noOSym}, for the case (ii).
\end{proof}

We note that the operators in Theorem~\ref{cor}(ii) are $\SL(n)$-invariant. It is well-known that the continuous and translation invariant real-valued valuations, which are $\SL(n)$-invariant, are linear combinations of the Euler characteristic and the volume. From this fact, it easily follows that the continuous and translation invariant Minkowski valuations, which are $\SL(n)$-invariant, are of the form $K\mapsto M_1+\vol(K)M_2$, where $M_1,M_2\in\K^n$ are fixed. The above result characterizes the $\SL(n)$-invariant Minkowski valuations satisfying (VC).

\begin{corollary}
Let $n\geq 2$. An operator $\di:\K^n\longrightarrow \K^n_s$ is a continuous, $\SL(n)$-invariant, and translation invariant Minkowski valuation satisfying (VC) if and only if 
there exist a centered segment $S$ and an $o$-symmetric $(n-1)$-dimensional convex body $L$ with $\dim (L+S)=n $ such that for every $K\in\K^n$, 
$$\di(K)= L+\vol(K)S.$$
\end{corollary}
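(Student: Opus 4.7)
The plan is to read this corollary as a direct application of Theorem~\ref{cor}: it gives a dichotomy for every $\di\in\MVal^s$ satisfying (VC), saying that either (i) $\di$ is $1$-homogeneous, or (ii) $\di(K)=L+\vol(K)S$ with $L,S$ exactly as in the statement of the corollary. Case (ii) already matches the conclusion, so the forward implication reduces to showing that, under the additional hypothesis of $\SL(n)$-invariance, case (i) is impossible.

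To rule out case (i), I would argue as follows. For every fixed $u\in\R^n$, the map $K\mapsto h(\di(K),u)$ is a continuous, translation invariant, and $\SL(n)$-invariant real-valued valuation, hence by the classical classification recalled in the paragraph preceding the corollary, it is a linear combination of the Euler characteristic and the volume. Therefore there exist $a(u),b(u)\in\R$ with
\[
h(\di(K),u)=a(u)+b(u)\vol(K)\quad\text{for every }K\in\K^n.
\]
If we now assume $\di$ is $1$-homogeneous, then $h(\di(\lambda K),u)=\lambda\, h(\di(K),u)$ for every $\lambda>0$, which gives
\[
a(u)+\lambda^{n}b(u)\vol(K)=\lambda a(u)+\lambda b(u)\vol(K).
\]
Testing first against a lower-dimensional $K$ (so that $\vol(K)=0$) yields $a(u)=\lambda a(u)$ for all $\lambda>0$, hence $a(u)=0$; testing then against a $K$ with $\vol(K)>0$ and any $\lambda\neq 1$, and using $n\geq 2$, forces $b(u)=0$. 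So $h(\di(K),u)=0$ for every $u\in\R^n$ and every $K\in\K^n$, which means $\di(K)=\{0\}$ for all $K$. This contradicts the lower bound in (VC), so case (i) cannot occur and the forward direction is proved.

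The backward direction is routine: if $\di(K)=L+\vol(K)S$ with $L\in\K^n_s$, $\dim L=n-1$, $S$ a centered segment, and $\dim(L+S)=n$, then continuity, translation invariance, $\SL(n)$-invariance, the valuation property, and the symmetry of the image all follow from the corresponding features of $\vol$ together with the facts that $L$ and $S$ are fixed and $o$-symmetric; that (VC) holds with $c_\di=C_\di=\vol(L+S)>0$ was already noted at the end of Section~\ref{sec:volume_constraints}. I do not anticipate any real obstacle: the corollary is essentially bookkeeping on top of Theorem~\ref{cor}, the only substantive ingredient being the classical $\SL(n)$-invariant valuation classification invoked to eliminate the $1$-homogeneous alternative.
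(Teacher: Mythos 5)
Your proposal is correct and follows essentially the route the paper intends: the corollary is derived from the dichotomy of Theorem~\ref{cor} combined with the classical classification of continuous, translation invariant, $\SL(n)$-invariant real-valued valuations as spans of the Euler characteristic and the volume (which the paper invokes in the remark immediately preceding the corollary), the $\SL(n)$-invariance being used precisely to exclude the $1$-homogeneous alternative. Your explicit elimination of case (i) via the identity $h(\di(K),u)=a(u)+b(u)\vol(K)$ and the homogeneity comparison is a clean way to make that exclusion precise.
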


\section{Proof of Theorems~\ref{+mon intro2} and \ref{+On_dim_geq_32}}\label{on_mon}
In this section, we apply Theorem~\ref{teo} to obtain Theorems~\ref{+mon intro2} and \ref{+On_dim_geq_32}. These are improvements of the following results from~\cite{abardia.colesanti.saorin1}, as the homogeneity hypothesis is removed.

\begin{theorem}[\cite{abardia.colesanti.saorin1}]\label{mon}
Let $n\geq 2$. An operator $\di\in\MVal$ is $1$-homogeneous, monotonic, and satisfies (VC) if and only if there is a $g\in\GL(n)$ such that 
$$
\di(K)=g(DK),\quad\forall\, K\in\K^n.
$$
\end{theorem}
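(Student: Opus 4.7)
The reverse direction is immediate: the difference body $D$ is a continuous, translation invariant, $1$-homogeneous Minkowski valuation; it is monotonic; and it satisfies (VC) by the Rogers-Shephard inequality. All of these properties are preserved after composing with any $g\in\GL(n)$. I therefore concentrate on the forward direction.

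The starting point is that, because $\di\in\MVal$ is $1$-homogeneous, its McMullen decomposition collapses to $h(\di(K),u)=f_1(K,u)$ with $f_1(\cdot,u)\in\Val_1$ for every $u$. By McMullen's classical integral representation of degree-$1$ continuous translation invariant real valuations, for each fixed $u$ there is a signed Borel measure $\sigma_u$ on $\sfe$ with
\[
f_1(K,u)=\int_{\sfe}h_K(v)\,d\sigma_u(v).
\]
This immediately yields Minkowski additivity, $\di(K+L)=\di(K)+\di(L)$, and the monotonicity of $\di$ translates to the non-negativity of every $\sigma_u$ (differences $h_L-h_K$ with $K\subset L$ are dense in the cone of non-negative continuous functions on the sphere).

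The central step is to use (VC) to force each $\di(S_v)$, for $S_v=[-v,v]$, to be a centered segment depending linearly (up to sign) on $v$. For linearly independent directions $v_1,\dots,v_n$, Minkowski additivity gives $\di(S_{v_1}+\cdots+S_{v_n})=\sum_i\di(S_{v_i})$; the lower bound in (VC) forces this sum to be $n$-dimensional, while the upper bound forces every $(n-1)$-fold subsum $\sum_{j\ne i}\di(S_{v_j})$ to be at most $(n-1)$-dimensional. Since $\di$ is $1$-homogeneous we have $\di(\{0\})=\{0\}$, and the dichotomy of Theorem~\ref{L0Ln} then places us in case $(\widetilde{B})$ of Claim~4 of its proof, i.e.\ $(a_0,\dots,a_n)=(0,n,0,\dots,0)$. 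Applying the combinatorial analysis of Claims~1--2 of the proof of Theorem~\ref{L0Ln}, together with Theorem~\ref{mix_volumes_Schneider}, to the basis $v_1,\dots,v_n$ yields $\dim\di(S_{v_i})=1$ for every $i$; since this holds for any basis, $\dim\di(S_v)=1$ for every non-zero $v$. Writing $\di(S_v)=S_{p(v)}$ for a continuous, $1$-homogeneous, even vector-valued map $p:\R^n\to\R^n$, the relation $|\langle p(v),u\rangle|=\int_{\sfe}|\langle v,w\rangle|\,d\sigma_u(w)$ coming from the integral representation, combined with the conclusion that each $\sigma_u$ must be supported on the antipodal pair $\{\pm g^{\top}u/|g^{\top}u|\}$ (so that the cosine transform on the right collapses to a single absolute value), produces a linear map $g\in\GL(n)$ with $\di(S_v)=D(gS_v)=2gS_v$; non-degeneracy of $g$ is forced by (VC).

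Having established $\di=g\circ D$ on segments, Minkowski additivity extends the identity to every zonotope and, by continuity, to every zonoid. To pass from zonoids to arbitrary convex bodies, I would consider the real-valued valuation $K\mapsto h(\di(K),u)-h(g(DK),u)$: for fixed $u$ it lies in $\Val_1$, its representing signed measure on $\sfe$ annihilates the support function of every zonoid, and differences of zonoid support functions are dense in the space of even continuous functions on $\sfe$ by cosine-transform inversion. The measure must therefore vanish, forcing $\di(K)=g(DK)$ on all of $\K^n$. The main obstacle is the segment-wise analysis in the previous paragraph: without any covariance linking different directions, the only handle on the family $\{\sigma_u\}$ is the dimensional combinatorics dictated by (VC), and converting this local information into a single global linear map $g$ requires careful reuse of the machinery developed in Section~\ref{sec: dependence a point}.
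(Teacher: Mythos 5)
First, a point of comparison: the paper does not actually prove this statement. Theorem~\ref{mon} is imported verbatim from \cite{abardia.colesanti.saorin1} and used as a black box in the proof of Theorem~\ref{+mon intro2}, so there is no in-paper argument to measure your proposal against. Judged on its own terms, your overall strategy (represent the monotone degree-one valuations $h(\di(\cdot),u)$ by non-negative measures $\sigma_u$ on $\sfe$, use (VC) to force each $\di(S_v)$ to be a segment, then identify a linear map) is a sensible route, and both the reverse direction and the dimension count giving $\dim\di(S_v)=1$ are sound. But the argument has genuine gaps.

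The central gap is the passage from ``each $\di(S_v)$ is a segment'' to ``there is a single $g\in\GL(n)$ with $\di(S_v)=2gS_v$ and $\sigma_u$ supported on $\{\pm g^{\top}u/|g^{\top}u|\}$''. This is asserted, not derived, and it is the entire content of the theorem: extracting a global linear map with no covariance hypothesis in sight. The information available at that point --- a continuous, $1$-homogeneous, even assignment $v\mapsto q(v)$ of segment vectors satisfying only the two-sided bounds $c\,|\det(v_1,\dots,v_n)|\le|\det(q(v_1),\dots,q(v_n))|\le C\,|\det(v_1,\dots,v_n)|$ (note (VC) does not give an exact functional equation) --- does not force $q$ to be linear; already for $n=2$ there are nonlinear $1$-homogeneous maps of the circle satisfying such determinant bounds. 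So the positivity of the $\sigma_u$ and the coupling between different directions $u$ must be used in an essential way that your sketch does not supply. A second gap is the final densification: an element of $\Val_1$ that vanishes on all zonoids need not vanish, since every \emph{odd} element of $\Val_1$ vanishes on zonoids (they are symmetric up to translation, and the valuation is translation invariant); your cosine-transform density argument only kills the even part of $K\mapsto h(\di(K),u)-h(g(DK),u)$, leaving a possible odd remainder that must be excluded by monotonicity. (Had the two-point structure of $\sigma_u$ actually been established, the identity $\di=g\circ D$ on all of $\K^n$ would follow at once from the integral representation and this detour would be unnecessary.) Finally, a smaller inaccuracy: general elements of $\Val_1$ are \emph{not} representable by signed measures on $\sfe$ (by Goodey--Weil they correspond to distributions); it is precisely monotonicity, via Minkowski additivity and the Riesz representation theorem, that produces the non-negative measure $\sigma_u$, so the logic ``a signed measure exists, and monotonicity makes it non-negative'' should be reversed.
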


\begin{theorem}[\cite{abardia.colesanti.saorin1}]\label{+On_dim_geq_3}
Let $n\geq 3$. 
\begin{enumerate}
\item[\emph{(i)}] An operator $\di\in\MVal$ is $1$-homogeneous, $\SO(n)$-covariant, and satisfies (VC) if and only if there are $a,b\geq 0$ with $a+b>0$ such that 
$$
\di(K)=a(K-\st(K))+b(-K+\st(K)),\quad\forall K\in\K^n.
$$

\item[\emph{(ii)}] An operator $\di\in\MVal^{s}$ is $1$-homogenous, $\SO(n)$-covariant, and satisfies (VC) if and only if there is a $\lambda >0$ such that $\di(K)=\lambda DK$ for every $K\in\K^n$.
\end{enumerate}
\end{theorem}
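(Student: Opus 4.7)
The plan is to use $1$-homogeneity and translation invariance to reduce $\di$ to its purely $1$-homogeneous McMullen piece, analyse $\di$ on segments via $\SO(n)$-covariance (where $n\ge 3$ is crucial), extend the information to zonoids through the polarisation identity of Theorem~\ref{lem_polarization}, and finally transport to all convex bodies via density (in the $o$-symmetric case) or a representation theorem (in the general case). Translation invariance and $1$-homogeneity first give $\di(\{p\})=\{0\}$ for every $p\in\R^n$, so by Corollary~\ref{cor:dimfi}(iii) all higher homogeneous pieces of the McMullen decomposition vanish and $\di$ coincides with its $1$-homogeneous part; in particular $h(\di K,\cdot)=f_1(K,\cdot)$.

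Next I would study $\di(S_v)$ for $v\in\sfe$. The stabilizer of $S_v$ in $\SO(n)$ contains $\SO(n-1)$ acting on $v^\perp$ together with rotations swapping $v$ and $-v$, so by $\SO(n)$-covariance $\di(S_v)$ is a centrally symmetric convex body of revolution about $\R v$; (UVC) forces $\dim \di(S_v)\le n-1$. For $n\ge 3$ this, together with an analysis of the degree-$n$ coefficient of $\vol(\di(\lambda C_n))$ for the cube $C_n=S_1+\dots+S_n$ (using additivity from Theorem~\ref{lem_polarization} with $k=1$, $1$-homogeneity, and (LVC)), rules out the non-segment possibilities and forces $\di(S_v)=\beta S_v$ for some $\beta\ge 0$. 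Transitivity of $\SO(n)$ on $\sfe$ makes $\beta$ independent of direction, and Theorem~\ref{lem_polarization} then gives $\di(Z)=\beta Z$ for every centered zonoid $Z$. In case (ii), density of zonoids in $\K^n_s$ for $n\ge 3$ together with continuity extends this to $\di(K)=\beta K$ on $\K^n_s$; since $\beta K=\tfrac{\beta}{2}DK$ on $\K^n_s$, and $h(DK,\cdot)$ depends only on the even part of $h(K,\cdot)$ while $\di(K)$ is $o$-symmetric, the identity $\di(K)=\lambda DK$ with $\lambda=\beta/2$ extends to all $K\in\K^n$ by a parity argument in $u$.

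For case (i) the symmetric component is handled by applying case (ii) to the Minkowski valuation $D\circ\di\in\MVal^s$, which is still $1$-homogeneous, $\SO(n)$-covariant and satisfies (VC), yielding the even-in-$u$ part of $h(\di K,\cdot)$ as $\tfrac{a+b}{2}h(DK,\cdot)$. The odd-in-$u$ part of $h(\di K,\cdot)$ corresponds to a continuous, translation invariant, $1$-homogeneous, $\SO(n)$-equivariant, odd operator, which by the Kiderlen/Schuster representation theorem admits a spherical convolution description with an $\SO(n-1)$-invariant signed measure; the (VC) constraint collapses this measure to point masses at the two poles, and matching coefficients yields the stated form $a(K-\st(K))+b(-K+\st(K))$. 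The main obstacle is this last step: outside $\K^n_s$ zonoids are not dense, so the extension from zonoids to arbitrary convex bodies cannot proceed by continuity alone and one must invoke the full classification of $\SO(n)$-equivariant Minkowski endomorphisms to pin down the odd component. This is also where the dimension restriction $n\ge 3$ is essential, since for $n=2$ the $\SO(n-1)$-invariance argument in the segment step degenerates and the analogous result needs a separate treatment.
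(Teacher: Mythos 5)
First, a point of reference: the paper does not prove Theorem~\ref{+On_dim_geq_3} at all --- it is imported verbatim from \cite{abardia.colesanti.saorin1} and used as a black box in the proof of Theorem~\ref{+On_dim_geq_32} --- so there is no in-paper argument to measure yours against, and your proposal has to stand on its own. Its first half does: $1$-homogeneity kills every McMullen component except $f_1$ (you do not even need Corollary~\ref{cor:dimfi}(iii), which in any case only concerns zonoids); the stabilizer argument shows that for $n\ge 3$ the body $\di(S_v)$ is a centred body of revolution about $\R v$ of dimension at most $n-1$; and the remaining ``disk in $v^\perp$'' alternative is killed by applying $\di$ to a degenerate zonotope such as $S_1+S_2$ (whose image would then be $n$-dimensional, violating (UVC)), so that $\di(S_v)=\beta S_v$ and, via Theorem~\ref{lem_polarization} and continuity, $\di(Z)=\beta Z$ on centred zonoids.

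The argument breaks at the extension beyond zonoids. Your key claim, ``density of zonoids in $\K^n_s$ for $n\ge 3$'', is false, and in fact exactly backwards: zonoids exhaust $\K^2_s$ in the plane, but for $n\ge 3$ they form a closed, nowhere dense subset of $\K^n_s$ (the cross-polytope is not a zonoid; see \cite{schneider.book14}). So continuity cannot propagate $\di(K)=\beta K$ from zonoids to all of $\K^n_s$, and case (ii) is not established. The subsequent ``parity argument in $u$'' is also invalid: an element of $\Val_1$ is not determined by its restriction to $\K^n_s$, since every odd valuation vanishes there, so knowing $\di=\lambda D$ on symmetric bodies together with the $o$-symmetry of the images does not force $\di=\lambda D$ on $\K^n$. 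Finally, in case (i) you delegate the decisive step to a convolution representation and assert that ``the (VC) constraint collapses this measure to point masses at the two poles''; that assertion is essentially the entire nontrivial content of the theorem (and for general, non-monotone endomorphisms the representing object is a priori only a distribution, not a signed measure), so it cannot be taken for granted. What is missing is an argument that works on all of $\K^n$ rather than only on the zonoid cone --- this is precisely what \cite{abardia.colesanti.saorin1} supplies and what your proposal does not.
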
 

We consider first Theorem~\ref{mon} and prove that the homogeneity property can be removed, that is, we prove Theorem~\ref{+mon intro2}. 

\begin{proof}[Proof of Theorem~\ref{+mon intro2}]
By Theorem~\ref{teo}, we have that $\di$ is either of the form
\begin{equation}\label{+On_1}
\di(K)=p+\di_1(K)+p_2(K)+\dots+p_{n-1}(K)+\vol(K)q,\quad\forall\,K\in\K^n,
\end{equation}
with $\di_1\in\MVal_1$, $p,q\in\R^n$ and $p_j:\K^n\longrightarrow\R^n$, $2\leq j\leq n-1$, continuous, translation invariant, and $j$-homogeneous valuations; 
or
\begin{equation}\label{+On_2}
\di(K)= L+p_1(K)+\dots+p_{n-1}(K)+\vol(K)S,\quad\forall\,K\in\K^n,
\end{equation}
with $S$ a non-degenerated segment, $L$ an $(n-1)$-dimensional convex body such that $\dim (L+S)=n $ and $p_j:\K^n\longrightarrow\R^n$, $1\leq j\leq n-1$, continuous, 
translation invariant, and $j$-homogeneous valuations. 

We observe first, that the monotonicity condition implies that for every $K\in\K^n$ and for every $\lambda\geq 1$, 
$\di(K)\subset\di(\lambda K)$, and that for every $0<\lambda \leq 1$, we have $\di(\lambda K)\subset\di(K)$ (notice that, by translation invariance we may assume
that $K$ contains the origin, so that $K\subset \lambda K$ for $\lambda\ge1$ and $K\supset\lambda K$ for $0<\lambda\le1$).

First we deal with the case of $\di$ being given as in \eqref{+On_1}. Let $\lambda\geq 1$ and $K\in\K^n$. Applying the support function to both sides of \eqref{+On_1}, since $h(\{p\},u)=\langle p,u\rangle$ for any $p\in\R^n$, the monotonicity condition $\di(K)\subset\di(\lambda K)$ implies that 
$$h(\di_1(K),u)+\sum_{j=2}^n\langle p_j(K),u\rangle\leq \lambda h(\di_1(K),u)+\sum_{j=2}^n\lambda^j\langle p_j(K),u\rangle,
$$
for any $u\in\R^n$, $\lambda\geq 1$, and $K\in\K^n$.
Following the notation in \eqref{+On_1}, we set $p_n(K)=q$.  
As $\lambda\to\infty$, we obtain that $\langle p_n(K),u\rangle\geq 0$ for every $u\in\R^n$, which is possible only if $q=p_n(K)=0$ for every $K\in\K^n$. 
By backward induction, as $\lambda\to\infty$, we obtain that $p_j(K)=0$ for every $K\in\K^n$ and $2\leq j\leq n-1$. 
Hence, $\di$ as given in \eqref{+On_1} is monotonic if and 
only if $\di=\di_1+p$, where $\di_1\in\MVal_1$ is monotonic. Theorem~\ref{mon} yields the first statement of Theorem~\ref{+mon intro2}.

We now deal with the operators of the form \eqref{+On_2}. Again taking the support function, we get, for every $u\in\R^n$, $0\leq\lambda\leq 1$, and $K\in\K^n$, 
$$\sum_{j=1}^{n-1}\langle p_j(K),u\rangle+\vol(K)h(S,u)\geq \sum_{j=1}^{n-1}\lambda^j\langle p_j(K),u\rangle+\lambda^n\vol(K)h(S,u).
$$
As $\lambda\to 0^+$, we have $\langle p_1(K),u\rangle\geq 0$ for every $u\in\R^n$, which implies $p_1(K)=0$ for every $K\in\K^n$. Induction on $j$ yields $p_j(K)=0$ for every 
$1\leq j\leq n-1$. Hence, the above inequality holds if $\lambda^nh(S,u)\leq h(S,u)$ for every $u\in\R^n$ and $0\leq \lambda\leq 1$. This is possible only if $h(S,u)\geq 0$ for every 
$u\in\R^n$, that is, if $S$ contains the origin. The result follows after observing that $K\mapsto L+\vol(K)S$ is monotonic if $S$ contains the origin.
\end{proof}

Next we prove Theorem~\ref{+On_dim_geq_32}. For that we apply Theorem~\ref{+On_dim_geq_3} and the following characterization, by Schneider \cite{schneider72}, of the Steiner point. We recall that the \emph{Steiner point} $\st(K)$ of $K\in\K^n$ is defined as
$$\st(K)=\frac{1}{\kappa_n}\int_{\sfe}h(K,u)udu$$ and that an operator $\vv:\K\longrightarrow \R^n$ is translation covariant if
$\vv(K+t)=\vv(K)+t$ for every $t\in\R^n$.
We observe that the Steiner point is a continuous, translation covariant, $\SO(n)$-covariant, and 1-homogeneous vector-valued valuation (see \cite[p.~50]{schneider.book14}). 
Schneider first proved in~\cite{schneider71} that this list of conditions characterizes the Steiner point. In \cite{schneider72}, he removed the homogeneity hypothesis and proved the following result. 

\begin{theorem}[\cite{schneider72}]\label{schneider_vector} An operator $\vv:\K^n\longrightarrow\R^n$ is a continuous, $\SO(n)$-covariant, and translation covariant valuation if and only if there is a $\lambda\geq 0$ such that $\vv(K)=\lambda\,\st(K)$ for every $K\in\K^n$.
\end{theorem}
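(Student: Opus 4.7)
The plan is to reduce the problem to a translation \emph{invariant} one by exploiting that $\st$ is itself a continuous, translation covariant, $\SO(n)$-covariant valuation. Concretely, I would set
\[
\psi(K) := \vv(K) - \st(K),
\]
and observe that translation covariance of both $\vv$ and $\st$ cancels, so that $\psi:\K^n\longrightarrow\R^n$ is a continuous, translation \emph{invariant}, $\SO(n)$-covariant, $\R^n$-valued valuation. Sufficiency is immediate ($\st$ has all the listed properties, and the statement of the theorem is consistent only with $\lambda=1$ under strict translation covariance), so the task reduces to proving $\psi \equiv 0$.

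Next I would apply the vector-valued analogue of Theorem~\ref{mcmullen_dec:teo}, applied coordinate-wise, to decompose
$\psi = \sum_{j=0}^n \psi_j$, where each $\psi_j:\K^n\longrightarrow\R^n$ is continuous, translation invariant, $\SO(n)$-covariant, and $j$-homogeneous. The endpoint degrees are handled directly: $\psi_0$ is a constant vector fixed by all of $\SO(n)$, which must vanish because $n\ge 2$; and by a Hadwiger-type characterization for vector-valued valuations, $\psi_n(K)=\vol(K)\,w$ for some $w\in\R^n$, and $\SO(n)$-covariance again forces $w=0$.

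The main obstacle is the intermediate range $1\le j\le n-1$, where $\SO(n)$-covariant $\R^n$-valued valuations are not automatically trivial. My plan is to first show vanishing on centred segments: for $S_u=[-u,u]$, any $g\in\SO(n)$ that stabilises $S_u$ setwise satisfies $\psi_j(S_u)=g\,\psi_j(S_u)$; for $n\ge 2$ the joint stabiliser of $S_u$ in $\SO(n)$ contains a rotation sending $u\mapsto -u$ (the $\pi$-rotation in a $2$-plane containing $u$ and any orthogonal direction), so $\psi_j(S_u)=-\psi_j(S_u)=0$. Combined with translation invariance, $\psi_j$ vanishes on every segment, and then the polarization identity in the style of Theorem~\ref{lem_polarization} propagates the vanishing from segments to all zonotopes, hence to all zonoids by continuity.

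The hardest step I anticipate is extending the vanishing from zonoids to all convex bodies in each intermediate degree $j$. Here I would employ the vector-valued counterpart of Lemma~\ref{r: facts on f_js}(v) (equivalently, Theorem~\ref{th_lemma}(ii) applied to each coordinate): once $\psi_j$ vanishes on all $j$-dimensional bodies, it vanishes identically. To reach dimension $j$ one would argue by induction using Lemma~\ref{standard fact ii}, writing a $j$-dimensional simplex $T$ as the ``difference'' $(T\cup P)\setminus P$ with $T\cup P$ a zonotope and $\dim(T\cap P)=j-1$, and invoking the valuation property together with Lemma~\ref{standard facts} to transfer the vanishing on zonotopes to simplices and hence to all $j$-dimensional convex bodies. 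After establishing $\psi_j\equiv 0$ for every $j$, we conclude $\psi\equiv 0$, yielding $\vv=\st$, the content of the theorem.
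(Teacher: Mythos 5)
The paper does not prove this statement: it is imported verbatim from Schneider \cite{schneider72} and used as a black box, so your argument can only be judged on its own terms. Your overall frame is the right one — subtract $\st$, note that $\psi=\vv-\st$ is a continuous, translation \emph{invariant}, $\SO(n)$-covariant vector valuation, decompose it by degree via Theorem~\ref{mcmullen_dec:teo}, and eliminate $\psi_0$ and $\psi_n$ because $\SO(n)$ fixes no nonzero vector. The difficulty is, as you say, in degrees $1\le j\le n-1$, and there your argument has genuine gaps. The polarization identity of Theorem~\ref{lem_polarization} writes $\psi_j$ of a zonotope as a sum of terms $\psi_j(S_{\sigma_1}+\dots+S_{\sigma_j})$, i.e.\ of values on sums of $j$ segments, not on single segments; so vanishing on segments only settles $j=1$, and for $j\ge2$ you must prove vanishing on $j$-dimensional parallelepipeds, whose stabilizers in $\SO(n)$ are generically far too small for your symmetry argument. (That argument is itself imprecise for $n\ge3$: the $\pi$-rotation in a $2$-plane containing $u$ is not $-\mathrm{id}$ on $\R^n$, so it only forces $\psi_j(S_u)$ into the fixed space $(\spa\{u,v\})^\perp$; one must intersect these fixed spaces over all choices of $v\perp u$, or use the subgroup of rotations fixing $u$, to conclude $\psi_j(S_u)=0$.)

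The more serious gap is the passage from zonoids to all bodies. Theorem~\ref{th_lemma}(ii) requires vanishing on all $(j+1)$-dimensional bodies, not $j$-dimensional ones (vanishing in dimension $j$ only yields oddness, part (i)), and the decomposition of Lemma~\ref{standard fact ii} gives $\psi_j(T)=-\psi_j(P)$ with $P$ a non-zonotopal polytope, from which nothing follows for a vector-valued valuation: unlike in Lemma~\ref{f_i=0 esencia}, there is no convexity or dimension count to exploit. In fact every \emph{odd} valuation in $\Val_j$ vanishes on all centrally symmetric bodies, hence on all zonoids, so no amount of information on zonoids can force $\psi_j\equiv0$. The essential content of Schneider's theorem in the intermediate degrees is precisely the vanishing of the odd part of $\psi_j$ under $\SO(n)$-covariance, and your proposal supplies no mechanism for it; one needs an additional input, e.g.\ spherical harmonics (Schneider's original method), the representation $\psi_{n-1}(K)=\int_{\sfe}\mathbf f(u)\,dS_{n-1}(K,u)$ with $\mathbf f$ $\SO(n)$-equivariant in degree $n-1$, or the harmonic decomposition of $\Val_j$ under $\SO(n)$, which shows that the standard representation does not occur.
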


\begin{proof}[Proof of Theorem \ref{+On_dim_geq_32}]
We will show that every operator $\di\in\MVal$ satisfying (VC) and being $\SO(n)$-covariant is also 1-homogeneous, so that we can apply Theorem~\ref{+On_dim_geq_3}.

By Theorem \ref{teo}, we know that $\di$ is either of the form \eqref{+On_1} or \eqref{+On_2}.
We study which of those operators are $\SO(n)$-covariant. 

Assume first that $\di$ is given as in \eqref{+On_2}. Applying \eqref{+On_2} to $\lambda K$, for $K\in\K^n$ and $\lambda>0$, taking support functions in \eqref{+On_2}, and using the $\SO(n)$-covariance, we have for every 
$g\in\SO(n)$ and $u\in\R^n$, 
\begin{eqnarray*}
&&h(L,u)+\sum_{j=1}^{n-1}\lambda^j h(p_j(g(K)),u)+\lambda^n\vol(K)h(S,u)\\
&&\quad\qquad=h(g(L),u)+\sum_{j=1}^{n-1}\lambda^j h(g(p_j(K)),u)+\lambda^n\vol(K)h(g(S),u).
\end{eqnarray*}
As $\lambda\to 0^+$, we obtain $h(L,u)=h(g(L),u)$ for every $g\in\SO(n)$, where $L\in\K^n$ is a fixed $(n-1)$-dimensional convex body. Since $h(L,u)=h(g(L),u)$ holds for every $g\in\SO(n)$  only if $L=\{0\}$ or $L=rB^n$, $r>0$, and non of these are $(n-1)$-dimensional convex bodies, we obtain that the case given by \eqref{+On_2} does not contain any $\SO(n)$-covariant valuation. 

Similarly, from \eqref{+On_1}, we have, for every $u\in\R^n$, $g\in\SO(n)$, $\lambda>0$, and $K\in\K^n$, 
\begin{eqnarray*}
&&h(\{p\},u)+\lambda h(\di_1(g(K)),u)+\sum_{j=2}^{n}\lambda^j h(\{p_j(g(K))\},u)\\
&&\quad\qquad=h(\{g(p)\},u)+\lambda h(g(\di_1(K)),u)+\sum_{j=2}^{n}\lambda^j h(\{g(p_j(K))\},u),
\end{eqnarray*}
which implies 
$$
h(\{p_j(g(K))\},u)=h(\{g(p_j(K))\},u),\quad\forall u\in\R^n,\, g\in\SO(n),\, K\in\K^n,\, 2\leq j\leq n.
$$
Hence, since the support functions of $p_j(K)\in\R^n$ and $g(p_j(K))\in\R^n$ coincide, we have
$p_j(g(K))=gp_j(K)$, $2\leq j\leq n$. By Theorem \ref{schneider_vector} applied to $K\mapsto p_j(K)-\st(K)$, which is translation covariant, this is possible only if $p_j(K)=\lambda_j\,\st(K)$ for every $K\in\K^n$ and some $\lambda_j\geq 0$. As $K\mapsto p_j(K)$ is homogeneous of degree $j$, $2\leq j\leq n$, and $K\mapsto\st(K)$ is homogeneous of degree 1, this is the case only if 
$\lambda_j=0$, for every $2\leq j\leq n$. 
Similarly, $h(\{p\},u)=h(\{g(p)\},u)$ 
for every $g\in\SO(n)$ and $u\in\R^n$, implies $p=0$. Therefore, the only operators of the form in \eqref{+On_1} which are $\SO(n)$-covariant are 1-homogeneous. 

We can now apply Theorem~\ref{+On_dim_geq_3} to obtain the result. 
\end{proof}

In a similar manner, we can use Theorem~\ref{teo} and Theorem~6.1 in \cite{abardia.colesanti.saorin1} to show the 2-dimensional version of Theorem~\ref{+On_dim_geq_32}.
\begin{theorem}
Let $n=2$. An operator $\di\in\MVal$ satisfies (VC) and is $\SO(2)$-covariant if and only if there are $g\in\SO(2)$ and $a,b\geq 0$ with $a+b>0$ such that 
$$
\di(K)=ag(K-\st(K))+bg(-K+\st(K)),\quad\forall K\in\K^n.
$$
\end{theorem}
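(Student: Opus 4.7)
The plan is to follow the same pattern as the proof of Theorem~\ref{+On_dim_geq_32}: apply Theorem~\ref{teo} to split $\di$ into the two possible shapes, use the $\SO(2)$-covariance to wipe out all McMullen components except the $1$-homogeneous one, and then appeal to Theorem~6.1 of \cite{abardia.colesanti.saorin1} (the $n=2$ analog of Theorem~\ref{+On_dim_geq_3}) to obtain the displayed form.

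I would first rule out case (ii) of Theorem~\ref{teo}. By the explicit version in Theorem~\ref{di_1Z=0_noOSym}, for $n=2$ such a $\di$ reads $\di(K)=L+p_1(K)+\vol(K)S$, with $L$ and $S$ segments satisfying $\dim(L+S)=2$. Replacing $K$ by $\lambda K$, passing to support functions in the identity $\di(gK)=g\di(K)$, and matching the coefficient of $\lambda^0$ forces $h(L,u)=h(gL,u)$ for every $g\in\SO(2)$ and every $u\in\R^2$. The only $\SO(2)$-invariant convex bodies in $\R^2$ are origin-centered discs (including the singleton $\{0\}$), which contradicts $\dim L=1$. Hence case (ii) cannot occur.

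Consequently $\di$ lies in case (i) of Theorem~\ref{teo}, and Theorem~\ref{di_1Z=n} specialized to $n=2$ writes it as $\di(K)=p+\di_1(K)+\vol(K)q$, where $p,q\in\R^2$ and $\di_1\in\MVal_1$ satisfies (VC). Writing the $\SO(2)$-covariance at $\lambda K$ and comparing coefficients of $\lambda^0$, $\lambda^1$, and $\lambda^2$ separately yields $p=gp$, $\di_1(gK)=g\di_1(K)$, and $\vol(K)q=\vol(K)gq$ for every $g\in\SO(2)$ and every $K$ with $\vol(K)>0$. The first and third equations force $p=0=q$, while the second shows that $\di_1$ is itself $\SO(2)$-covariant. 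Applying Theorem~6.1 of \cite{abardia.colesanti.saorin1} to $\di_1$ then delivers constants $a,b\geq 0$ with $a+b>0$ and a rotation $g\in\SO(2)$ such that $\di(K)=\di_1(K)=ag(K-\st(K))+bg(-K+\st(K))$ for all $K\in\K^2$; the extra rotation $g$ survives in dimension two precisely because $\SO(2)$ is abelian and so any fixed rotation commutes with the $\SO(2)$-action.

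The only genuinely nonroutine point is the appeal to Theorem~6.1 of \cite{abardia.colesanti.saorin1}, which encodes the dimension-two peculiarity; the rest is simply careful bookkeeping of the McMullen decomposition under $\SO(2)$-covariance. The reverse implication is immediate: any operator of the displayed form is a continuous translation invariant Minkowski valuation (since $K\mapsto K-\st(K)$ absorbs translations), is $\SO(2)$-covariant by commutativity, and satisfies (VC) as a direct consequence of (RS).
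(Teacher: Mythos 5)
Your proposal is correct and follows exactly the route the paper indicates for this theorem (which it only sketches): apply Theorem~\ref{teo}, use $\SO(2)$-covariance of the degree-$0$ component to exclude case (ii) and to kill the points $p$ and $q$ in case (i), and then invoke Theorem~6.1 of \cite{abardia.colesanti.saorin1} for the resulting $1$-homogeneous, $\SO(2)$-covariant valuation satisfying (VC). Your bookkeeping is a faithful and correctly executed expansion of that sketch, including the observation that for $n=2$ there are no intermediate $p_j$ terms to handle via the Steiner-point characterization.
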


\section{Examples}\label{examples}

We provide some examples of operators satisfying all but one of the hypothesis of Theorem~\ref{teo} and hence showing that the result is best possible, in the sense that other operators appear if one of the hypothesis is removed, except for the continuity. We are not aware of a translation invariant Minkowski valuation satisfying (VC) which is not continuous.

\begin{example}
Let $L$ be an $(n-1)$-dimensional convex body and let $S$ be a segment such that $\dim (L+S)=n$. Then
\[
K\mapsto DK+\st(K)
\]
and 
\[
K\mapsto L+\vol(K) S +\st(K)
\]
are continuous Minkowski valuations which satisfy (VC).  However, they are not translation invariant,
since the Steiner point is not. 
\end{example}

\begin{example}
The operator \[
K\mapsto {\rm conv}\left((K-\st(K))\cup (-K+\st(K))\right)
\]
is continuous, translation invariant, and satisfies (VC).
It is also an $o$-symmetrization. However, it is not a Minkowski valuation.
\end{example}

\begin{example}
Let $L$ be an $(n-1)$-dimensional convex body and let $S$ be a segment such that $\dim (L+S)=n$.
Then 
\[
K\mapsto L+\vol(DK) S
\]
is a continuous and translation invariant operator satisfying (VC). However, it is not a Min\-kows\-ki valuation.
\end{example}

\begin{example}
For $n\geq 2$, the complex difference body introduced in \cite{abardia}, $D_C:\K^{2n}\longrightarrow \K^{2n}$,
with $C$ an $o$-symmetric planar convex body
provides a continuous and translation invariant Minkowski valuation that satisfies (LVC) and is an $o$-symmetrization. 
However, it does not satisfy (UVC).
\end{example}

\begin{example}
Let $L$ be an $(n-1)$-dimensional convex body and let $S$ be a segment with $\dim (L+S)=n$.
Then the operator
\[
K\mapsto L+\vol(K) S+ DK
\]
is a continuous and translation invariant Minkowski valuation satisfying (LVC). However, it does not satisfy (UVC).
\end{example}

\begin{example}
Let $L$ be an $(n-1)$-dimensional symmetric convex body and let $S$ be a segment with $\dim (L+S)<n$.
Then the operator
\[
K\mapsto L+\vol(K) S
\]
is a continuous and translation invariant Minkowski valuation satisfying (UVC). However, it does not satisfy (LVC).
\end{example}

\end{document}